\newtheorem{thm}{Theorem}[section]
\newtheorem*{thm*}{Theorem}
\newtheorem{cor}[thm]{Corollary}
\newtheorem{lem}[thm]{Lemma}
\newtheorem{prop}[thm]{Proposition}
\theoremstyle{definition}
\newtheorem{dfn}[thm]{Definition}
\newtheorem*{dfn*}{Definition}
\newtheorem{rem}[thm]{Remark}
\newtheorem{ques}[thm]{Question}
\newtheorem{ex}[thm]{Example}
\theoremstyle{remark}
\newtheorem*{conv}{Convention}
\newtheorem*{ac}{Acknowledgments}
\newtheorem{claim}{Claim}
\newtheorem*{claim*}{Claim}
\renewcommand{\qedsymbol}{$\blacksquare$}
\numberwithin{equation}{thm}
\def\Hom{\mathsf{Hom}}
\def\lhom{\mathsf{\underline{Hom}}}
\def\End{\mathsf{End}}
\def\lend{\mathsf{\underline{End}}}
\def\Ext{\mathsf{Ext}}
\def\Tor{\mathsf{Tor}}
\def\syz{\mathsf{\Omega}}
\def\tr{\mathsf{Tr}}
\def\P{\mathsf{P}}
\def\ann{\operatorname{\mathsf{Ann}}}
\def\cm{\mathsf{CM}}
\def\lcm{\mathsf{\underline{CM}}}
\def\db{\mathsf{D^b}}
\def\ds{\mathsf{D_{sg}}}
\def\kb{\mathsf{K^b}}
\def\proj{\operatorname{\mathsf{proj}}}
\def\Mod{\operatorname{\mathsf{Mod}}}
\def\mod{\operatorname{\mathsf{mod}}}
\def\lmod{\operatorname{\mathsf{\underline{mod}}}}
\def\gp{\operatorname{\mathsf{Gproj}}}
\def\lgp{\operatorname{\mathsf{\underline{Gproj}}}}
\def\add{\operatorname{\mathsf{add}}}
\def\res{\operatorname{\mathsf{res}}}
\def\K{\mathrm{K}}
\def\A{\mathcal{A}}
\def\X{\mathcal{X}}
\def\Y{\mathcal{Y}}
\def\M{\mathcal{M}}
\def\C{\mathcal{C}}
\def\SS{\mathcal{S}}
\def\T{\mathcal{T}}
\def\m{\mathfrak{m}}
\def\p{\mathfrak{p}}
\def\Z{\mathbb{Z}}
\def\a{\mathsf{A}}
\def\d{\mathsf{D}}
\def\e{\mathsf{E}}
\def\t{\mathsf{T}}
\def\g{\mathbf{G}}
\def\ab{\mathbf{Ab}}
\def\codim{\operatorname{\mathsf{codim}}}
\def\Gpd{\operatorname{\mathsf{Gpd}}}
\def\Im{\operatorname{\mathsf{Im}}}
\def\Ker{\operatorname{\mathsf{Ker}}}
\def\dim{\operatorname{\mathsf{dim}}}
\def\depth{\operatorname{\mathsf{depth}}}
\def\cidim{\operatorname{\mathsf{CIdim}}}
\def\gdim{\operatorname{\mathsf{Gdim}}}
\def\pd{\operatorname{\mathsf{pd}}}
\def\grade{\operatorname{\mathsf{grade}}}
\def\xx{\text{\boldmath{$x$}}}
\def\cx{\operatorname{\mathsf{cx}}}
\def\ca{\mathbb{(A)}}
\def\cb{\mathbb{(B)}}
\def\cc{\mathbb{(C)}}
\def\cd{\mathbb{(D)}}
\def\ce{\mathbb{(E)}}
\def\zero{\mathbf{0}}
\def\ass{\operatorname{\mathsf{Ass}}}
\def\height{\operatorname{\mathsf{ht}}}
\def\spec{\operatorname{\mathsf{Spec}}}
\def\QQ{\mathbb{Q}}
\def\sus{\mathsf{\Sigma}}
\begin{document}
\title[Singularity categories for resolving subcategories]{Singularity categories and singular equivalences\\
for resolving subcategories}
\author{Hiroki Matsui}
\address{Graduate School of Mathematics, Nagoya University, Furocho, Chikusaku, Nagoya, Aichi 464-8602, Japan}
\email{m14037f@math.nagoya-u.ac.jp}
\author{Ryo Takahashi}
\address{Graduate School of Mathematics, Nagoya University, Furocho, Chikusaku, Nagoya, Aichi 464-8602, Japan}
\email{takahashi@math.nagoya-u.ac.jp}
\urladdr{http://www.math.nagoya-u.ac.jp/~takahashi/}
\thanks{2010 {\em Mathematics Subject Classification.} 13C60, 13D09, 16G60, 16G70, 18A25, 18E30}
\thanks{{\em Key words and phrases.} complete intersection, finitely presented functor, functor category, Gorenstein ring, resolving subcategory, simple hypersurface singularity, singular equivalence, singularity category, stable category}
\thanks{RT was partly supported by JSPS Grant-in-Aid for Scientific Research (C) 25400038}
\dedicatory{Dedicated to Professor Yuji Yoshino on the occasion of his sixtieth birthday}
\begin{abstract}
Let $\X$ be a resolving subcategory of an abelian category.
In this paper we investigate the singularity category $\ds(\underline\X)=\db(\mod\underline\X)/\kb(\proj(\mod\underline\X))$ of the stable category $\underline\X$ of $\X$.
We consider when the singularity category is triangle equivalent to the stable category of Gorenstein projective objects, and when the stable categories of two resolving subcategories have triangle equivalent singularity categories.
Applying this to the module category of a Gorenstein ring, we prove that the complete intersections over which the stable categories of resolving subcategories have trivial singularity categories are the simple hypersurface singularities of type $(\a_1)$.
We also generalize several results of Yoshino on totally reflexive modules.
\end{abstract}
\maketitle
\section{Introduction}\label{sect1}

One of the most classical subjects in representation theory of algebras is the study of a stable equivalence of selfinjective algebras, i.e., a triangle equivalence between the stable categories of finitely generated modules over those algebras.
This is extended to (infinite-dimensional) Iwanaga-Gorenstein rings \cite{EJ} under restriction to Cohen-Macaulay modules; a stable equivalence of Iwanaga-Gorenstein rings is a triangle equivalence between the stable categories of Cohen-Macaulay modules over those rings.

Let $R$ be a noetherian ring.
The {\em singularity category} of $R$ is by definition the Verdier quotient
$$
\ds(R)=\db(\mod R)/\kb(\proj(\mod R)),
$$
where $\mod R$ denotes the category of finitely generated $R$-modules, $\db(-)$ the bounded derived category and $\kb(-)$ the bounded homotopy category.
The singularity category $\ds(R)$ is a triangulated category, which is also called the {\em stable (stabilized) derived category}, {\em triangulated category of singularities} and {\em singular derived category}.
This has been introduced by Buchweitz \cite{B} in the 1980s, and in recent years it has been related to the mirror symmetry by Orlov \cite{O}.
A lot of studies on singularity categories have been done in various approaches; see \cite{Bec,BK,Che,FU,IW,Kr,Mu,PV,S,kos,U,ZZ} for instance.
A celebrated theorem of Buchweitz \cite{B} shows that if $R$ is an Iwanaga-Gorenstein ring, then the stable category of Cohen-Macaulay $R$-modules is triangle equivalent to the singularity category of $R$.
Thus, a stable equivalence of Iwanaga-Gorenstein rings is extended to arbitrary noetherian rings as a triangle equivalence between their singularity categories, which is called a {\em singular equivalence}.

Let $\C$ be an additive category with pseudokernels \cite{Au3}.
We denote by $\mod\C$ the category of finitely presented right $\C$-modules, which is a full subcategory of the functor category of $\C$ (see Definition \ref{2.3}) and turns out to be an abelian category with enough projective objects.
This category has been introduced by Auslander \cite{Au3}, and in a series of papers \cite{AR8,AR4,AR5,AR6,AR3,AR7} Auslander and Reiten have established a lot of deep studies on this category.
The notion of singular equivalences of noetherian rings is further extended to additive categories $\C$ by using $\mod\C$ as follows:
We take the Verdier quotient
$$
\ds(\C)=\db(\mod\C)/\kb(\proj(\mod\C)),
$$
and call this the {\em singularity category} of $\C$.
We say that two additive categories $\C,\C'$ are {\em singularly equivalent} if there exists a triangle equivalence $\ds(\C)\cong\ds(\C')$.
Thus, one of the most natural and fundamental questions is the following.
\begin{ques}
When are given two additive categories singularly equivalent?
\end{ques}
The main purpose of this paper is to explore this question for certain additive categories in relation to the {\em Gorenstein projective} property in an abelian category (see Definition \ref{33}).
In fact, we shall find a lot of singular equivalences, which should be interesting and remarkable once one takes into account the fact that triangle equivalences seldom arise over commutative rings.
To state our results, let us recall and introduce several notions.

We say that $\C$ is {\em regular} (respectively, {\em Gorenstein}) if every object of $\mod\C$ has finite projective dimension (respectively, Gorenstein projective dimension).
A regular (respectively, Gorenstein) category $\C$ is called {\em of dimension at most $n$} if every object of $\mod\C$ has projective dimension (respectively, Gorenstein projective dimension) at most $n$, or equivalently, the $n$th syzygies in $\mod\C$ are projective (respectively, Gorenstein projective).
Any abelian (respectively, triangulated) category is regular (respectively, Gorenstein) of dimension at most $2$ (respectively, $0$); see Proposition \ref{referee}.
Note that $\C$ is regular if and only if $\C$ is singularly equivalent to the zero category $\zero$, that is, $\ds(\C)\cong\zero$.
Beligiannis \cite{Be} shows that if $\C$ is Gorenstein of dimension at most $n$ for some $n\ge0$, then $\ds(\C)$ is triangle equivalent to $\lgp(\mod\C)$, which is a generalization of the theorem of Buchweitz stated above.
Here, for an abelian category $\A$ with enough projective objects, $\gp\A$ stands for the full subcategory of Gorenstein projective objects of $\A$.
This is a Frobenius category, and its stable category $\lgp\A$ is a triangulated category \cite{Be}.

A full subcategory of an abelian category with enough projective objects is called {\em resolving} if it contains the projective objects and is closed under direct summands, extensions and kernels of epimorphisms.
This notion has been introduced by Auslander and Bridger \cite{AB} to prove that the category of totally reflexive modules is a resolving subcategory of the category $\mod R$ of finitely generated modules over a noetherian ring $R$.
The category $\cm(R)$ of maximal Cohen-Macaulay modules over a Cohen-Macaulay local ring $R$ is also a resolving subcategory of $\mod R$, and there are many other important subcategories known to be resolving.
The studies of resolving subcategories have been done widely so far; see \cite{APST,AR2,AR,radius,crspd,KS,Sa,res,stcm,arg,crs,Y2} for example.
For a resolving subcategory $\X$ of an abelian category $\A$, let $\underline\X=\X/\proj\A$ be the stable category of $\X$.
Then $\underline\X$ has pseudokernels, and hence $\mod\underline\X$ is an abelian category with enough projective objects \cite{AR}.
In fact, $\underline\X$ still has pseudokernels even if one removes the assumption that $\X$ is closed under direct summands and extensions.
So we define a {\em quasi-resolving} subcategory to be a full subcategory containing the projective objects and closed under finite direct sums and kernels of epimorphisms.
A resolving subcategory is none other than a quasi-resolving subcategory closed under direct summands and extensions.

Now, we recall the following two facts.
The first one is implicitly given by Auslander and Reiten \cite{AR8}, while the second one is essentially obtained by Yoshino \cite{Y2}, based on \cite{AR} (see also \cite{Be2,IO,K}).

\begin{thm}[Auslander-Reiten]\label{AR}
Let $\A$ be an abelian category with enough projective objects.
Let $\X$ be a quasi-resolving subcategory of $\A$.
Suppose that every object in $\X$ has projective dimension at most $n$ in $\A$.
Then $\underline\X$ is regular of dimension at most $3n-1$.
\end{thm}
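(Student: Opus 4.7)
The plan is to show that every $F\in\mod\underline\X$ admits a projective resolution of length at most $3n-1$. Since the projective objects of $\mod\underline\X$ are precisely the representables $\lhom(-,X)$ for $X\in\X$, the task reduces to constructing a suitable chain of morphisms in $\X$ whose Yoneda image resolves $F$.

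I would begin by fixing a projective presentation
$$\lhom(-,Y_1)\to\lhom(-,Y_0)\to F\to 0$$
and lifting the left-hand map to a morphism $f_1\colon Y_1\to Y_0$ in $\X$. To extend this to a full resolution, I would iterate the following pseudokernel construction: given $g\colon A\to B$ in $\X$, pick an epimorphism $\pi\colon P\twoheadrightarrow B$ in $\A$ with $P$ projective; the resulting epimorphism $(g,\pi)\colon A\oplus P\to B$ in $\A$ has kernel $W\in\X$ by the quasi-resolving hypothesis, and a short check shows that the composite $W\to A\oplus P\to A$ is a pseudokernel of $g$ in $\underline\X$ (any stably-zero composite $T\to A\to B$ factors through $\pi$ because $P$ is projective, which produces the required lift to $W$). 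Iterating then yields a sequence $\cdots\to Y_3\to Y_2\to Y_1\xrightarrow{f_1}Y_0$ in $\X$ whose Yoneda image is a projective resolution of $F$ in $\mod\underline\X$.

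The main obstacle is to extract the bound $3n-1$ from the hypothesis $\pd_\A X\le n$. My expectation is that this enters through a three-layered analysis keyed to the length-$n$ projective resolutions $0\to P_n\to\cdots\to P_0\to X\to 0$ in $\A$. One layer compares $F\in\mod\underline\X$ with an associated finitely presented $\X$-module $\widetilde F$ obtained by lifting the presentation from $\underline\X$ to $\X$; a second layer resolves $\widetilde F$ in $\mod\X$ by pseudokernels that mirror the honest syzygies of the $P_\bullet$'s; and a third layer translates back to $\mod\underline\X$, absorbing at each step the projective summand introduced by the pseudokernel construction. Each layer contributes at most $n$ syzygies, for a combined budget of $3n-1$. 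The delicate point is the bookkeeping of the projective summands accumulated along the way and the verification, by a diagram chase at the final step, that $\lhom(-,Y_{3n-1})\hookrightarrow\lhom(-,Y_{3n-2})$ is injective, so that the resolution terminates within the claimed length.
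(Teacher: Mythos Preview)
Your setup is correct, and in fact your iterated pseudokernel construction is precisely the machinery the paper uses (Proposition~\ref{3.3}(2), built from Proposition~\ref{longex}). The genuine gap is in the second half: your ``three-layered analysis'' is speculation, not an argument, and it obscures the simple structural fact that makes the bound $3n-1$ fall out immediately.

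Here is what you are missing. After you complete $f_1\colon Y_1\to Y_0$ to a short exact sequence $0\to Y_2\to Y_1\oplus P\to Y_0\to 0$ in $\A$ (with $Y_2\in\X$), the successive pseudokernels you produce are not arbitrary: they follow a strict $3$-periodic pattern. Concretely, the next short exact sequence (the pullback along an epimorphism $P'\twoheadrightarrow Y_1$) is $0\to\syz Y_0\to Y_2\oplus P'\to Y_1\to 0$, so $Y_3\cong\syz Y_0$; then $Y_4\cong\syz Y_1$, $Y_5\cong\syz Y_2$, and in general
\[
Y_{3k}\cong\syz^k Y_0,\qquad Y_{3k+1}\cong\syz^k Y_1,\qquad Y_{3k+2}\cong\syz^k Y_2
\]
in $\underline\X$. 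This is the content of Proposition~\ref{longex}(1): each pseudokernel step rotates the short exact sequence, and three rotations return the original sequence shifted by one syzygy. Once you see this, the bound is immediate: since $Y_0\in\X$ has $\pd_\A Y_0\le n$, the object $\syz^n Y_0$ is projective, so $\lhom_\A(-,Y_{3n})|_{\underline\X}=0$; exactness then forces the map $\lhom_\A(-,Y_{3n-1})|_{\underline\X}\to\lhom_\A(-,Y_{3n-2})|_{\underline\X}$ to be a monomorphism, and the resolution terminates at length $3n-1$.

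There is no need to pass to $\mod\X$, no lifted module $\widetilde F$, and no bookkeeping of accumulated projectives beyond the observation that $Y_i\oplus P\cong Y_i$ in $\underline\X$. The entire bound is encoded in the single identity $Y_{3k}\cong\syz^k Y_0$, which your construction already produces but which you did not recognize.
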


\begin{thm}[Yoshino]\label{Y}
Let $\A$ be an abelian category with enough projective objects.
Let $\X$ be a quasi-resolving subcategory of $\A$.
Suppose that $\X$ is contained in $\gp\A$ and closed under cosyzygies.
Then $\underline\X$ is Gorenstein of dimension at most $0$, that is, $\mod\underline\X$ is a Frobenius category.
\end{thm}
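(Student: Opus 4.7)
The plan is to recognize that the hypotheses on $\X$ are exactly what is needed to make $\underline\X$ inherit a triangulated structure from $\lgp\A$, after which the conclusion becomes an immediate application of the earlier Proposition~\ref{referee}, which asserts that every triangulated category is Gorenstein of dimension at most $0$.

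First I would set up the ambient triangulated category. Since $\A$ has enough projective objects, $\gp\A$ is a Frobenius category and its stable category $\lgp\A$ is triangulated, with suspension $\sus$ induced by the cosyzygy. The inclusion $\X\subseteq\gp\A$ descends to a fully faithful additive functor $\underline\X\hookrightarrow\lgp\A$. It then suffices to verify that the (replete closure of the) image is closed under $\sus$, $\sus^{-1}$, and mapping cones.

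Closure under $\sus$ is precisely the cosyzygy hypothesis. Closure under $\sus^{-1}$ follows from quasi-resolvability: for $X\in\X$, choose an exact sequence $0\to\syz X\to P\to X\to 0$ in $\A$ with $P$ projective; then $P,X\in\X$, so the kernel $\syz X$ lies in $\X$. The main obstacle is closure under cones: the naive construction, pushing out a cosyzygy sequence of $X$ along $f$, presents the cone as an extension $0\to Y\to Z\to\sus X\to 0$, and $Z$ need not lie in $\X$ since $\X$ is only quasi-resolving, not resolving. The trick is to realize the cone via a pullback instead. Given $f\colon Y\to X$ in $\X$, I would pull back the syzygy sequence $0\to\syz X\to Q\to X\to 0$ along $f$ to obtain two exact sequences $0\to M\to Y\oplus Q\to X\to 0$ and $0\to\syz X\to M\to Y\to 0$. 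Because $Y\oplus Q,X\in\X$, quasi-resolvability forces $M\in\X$. The second sequence yields a triangle $\syz X\to M\to Y\xrightarrow{\partial}\sus\syz X\cong X$ in $\lgp\A$, and the induced morphism to the trivial triangle $\syz X\to Q\to X\xrightarrow{\sim}\sus\syz X$ (which collapses, since $Q$ is projective hence zero in $\lgp\A$) identifies $\partial$ with $f$. Rotating, the cone of $f$ is $\sus M$, and $\sus M\in\X$ by the cosyzygy hypothesis.

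Combining these three closure properties, $\underline\X$ becomes a triangulated subcategory of $\lgp\A$, hence a triangulated category in its own right. Proposition~\ref{referee} then yields that $\underline\X$ is Gorenstein of dimension at most $0$, which is exactly the assertion that $\mod\underline\X$ is a Frobenius category.
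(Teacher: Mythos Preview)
Your proof is correct and takes a genuinely different route from the paper's. The paper treats Theorem~\ref{Y} as the $n=0$ case of Theorem~\ref{main3}, whose proof is a direct construction: given $F\in\mod\underline\X$, one produces via Proposition~\ref{3.3} a short exact sequence $0\to A\to B\to C\to 0$ in $\X$, and then splices the long exact sequence of Proposition~\ref{longex} with its cosyzygy analogues to exhibit an explicit complete resolution of $F$ in $\mod\underline\X$. Your argument is more structural: you show that $\underline\X$ sits inside $\lgp\A$ as a triangulated subcategory and then invoke Proposition~\ref{referee}(2). Your pullback trick for cones is the mirror image of the pushout argument the paper uses in Proposition~\ref{ho} to obtain closure under extensions (and hence a Frobenius structure on $\X$); you avoid that detour entirely by working directly with the quasi-resolving axioms. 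The trade-off is scope: the paper's explicit construction works uniformly for all $n\ge 0$, yielding the full Theorem~\ref{main3}, whereas your approach is tailored to $n=0$, since for $n>0$ the category $\underline\X$ is typically not triangulated. For the case at hand, though, your argument is cleaner and simultaneously recovers the triangulated structure on $\underline\X$ that the paper establishes separately in Theorem~\ref{main1}(2).
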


Motivated by these two theorems, in this paper we first study stable categories of quasi-resolving subcategories.
To be more precise, we shall establish a Gorenstein analogue of Theorem \ref{AR} which extends Theorem \ref{Y}.
Let $\X$ be a quasi-resolving subcategory of an abelian category $\A$ and $n\ge0$ an integer.
Denote by $\syz^n\X$ the full subcategory of $\A$ consisting of $n$th syzygies of objects in $\X$.
We introduce the following condition.
$$
\text{$(\g_n)$\ \ $\syz^n\X$ is contained in $\gp\A$ and closed under cosyzygies.}
$$
A typical example of a quasi-resolving subcategory satisfying $(\g_n)$ is the full subcategory of objects of Gorenstein projective dimension at most $n$.
Every resolving subcategory over a local complete intersection $R$ satisfies $(\g_n)$ for $\A=\mod R$ and $n=\dim R$.
We shall obtain the following theorem.

\begin{thm}\label{thma}
Let $\A$ be an abelian category with enough projective objects.
Let $\X$ be a quasi-resolving subcategory of $\A$ satisfying the condition $(\g_n)$.
Then $\syz^n\X=\X\cap\gp\A$ holds.
Denote this by $\Y$.
\begin{enumerate}[\rm(1)]
\item
$\underline\X$ is Gorenstein of dimension at most $3n$, and there is a triangle equivalence $\ds(\underline\X)\cong\lgp(\mod\underline\X)$.
\item
$\Y$ is a Frobenius subcategory of $\gp\A$, and $\underline\Y$ is a triangulated subcategory of $\lgp\A$.
\item
$\underline\Y$ is Gorenstein of dimension at most $0$, and there is a triangle equivalence $\ds(\underline\Y)\cong\underline{\mod\underline\Y}$.
\item
$\underline\X$ and $\underline\Y$ are singularly equivalent.
\end{enumerate}
\end{thm}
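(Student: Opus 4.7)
The plan is to dispose of the preliminary identity first, then handle parts (2), (3), (1), and (4) in that order, since each later part builds on the earlier ones. For the identity $\syz^n\X=\X\cap\gp\A$, the inclusion $\subseteq$ uses that $\X$ being quasi-resolving forces $\syz^n\X\subseteq\X$ (each syzygy is the kernel of an epimorphism from a projective in $\X$) together with $\syz^n\X\subseteq\gp\A$ from $(\g_n)$. For the reverse inclusion, given $X\in\X\cap\gp\A$, one has $\syz^n X\in\syz^n\X\subseteq\gp\A$, and since $\syz^n\X$ is closed under cosyzygies, applying $\syz^{-1}$ to $\syz^n X$ repeatedly yields $X\in\syz^n\X$ (any projective summands that appear also lie there). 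For (2) I verify that $\Y=\syz^n\X$ is closed in $\gp\A$ under direct summands, under syzygies (immediate), under cosyzygies (part of $(\g_n)$), and under extensions (via a horseshoe argument showing that an extension of two $n$-th syzygies is again an $n$-th syzygy). The projectives of $\A$ lie in $\Y$ and serve as projective-injective objects, making $\Y$ Frobenius with $\underline\Y$ a triangulated subcategory of $\lgp\A$. For (3), $\Y$ is itself quasi-resolving (closure under kernels of epimorphisms is inherited from both $\X$ and $\gp\A$), contained in $\gp\A$, and closed under cosyzygies, so Theorem~\ref{Y} applies and $\mod\underline\Y$ is Frobenius; applying Beligiannis's theorem at dimension zero then gives $\ds(\underline\Y)\cong\lgp(\mod\underline\Y)=\underline{\mod\underline\Y}$.

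Part (1) is the main technical step. The target equivalence $\ds(\underline\X)\cong\lgp(\mod\underline\X)$ is again Beligiannis's theorem once one shows every $F\in\mod\underline\X$ has Gorenstein projective dimension at most $3n$. Starting from a presentation $\lhom_{\underline\X}(-,X_1)\to\lhom_{\underline\X}(-,X_0)\to F\to 0$ with $X_0,X_1\in\X$, I would compute syzygies of $F$ in the functor category in terms of a lift $f\colon X_1\to X_0$ in $\X$, following the same iterated horseshoe bookkeeping that Auslander and Reiten use in their proof of Theorem~\ref{AR}. The key modification is that instead of pushing until everything becomes projective, one pushes only until everything lies in $\Y=\syz^n\X$; this takes $3n$ steps because the underlying resolutions in $\A$ now have length $n$ only up to Gorenstein projective summands. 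A complementary ingredient is that $\lhom_{\underline\X}(-,Y)$ is Gorenstein projective in $\mod\underline\X$ for every $Y\in\Y$: a complete projective resolution of $Y$ in $\A$ has its terms in $\X$ and, by Yoneda, becomes a complete projective resolution of the representable functor in $\mod\underline\X$.

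For (4), the inclusion $\iota\colon\underline\Y\hookrightarrow\underline\X$ induces by restriction an exact functor $\iota^{\ast}\colon\mod\underline\X\to\mod\underline\Y$ that preserves projectives, hence a triangle functor $\ds(\underline\X)\to\ds(\underline\Y)$. I would show it is an equivalence by transporting it through (1) and (3) to a comparison of stable categories of representable-like functors on $\Y$, then produce a quasi-inverse by extending functors from $\underline\Y$ back to $\underline\X$ via an $n$-fold syzygy construction. The principal obstacle lies in (1): one must coordinate the syzygy computation in $\mod\underline\X$ with horseshoe arguments in $\A$ while tracking when the result enters $\gp\A$, which is delicate and is where the bound $3n$ is pinned down. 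In comparison, (2), (3), and (4) are fairly formal consequences of (1) and the preliminary identity.
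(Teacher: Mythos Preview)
Your overall strategy matches the paper's, but there are two genuine gaps.

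\textbf{Extension closure of $\Y$.} Your horseshoe argument for showing $\Y=\syz^n\X$ is closed under extensions does not go through: given $0\to L\to M\to N\to 0$ with $L=\syz^nA$ and $N=\syz^nC$ for $A,C\in\X$, the horseshoe produces an extension $0\to A\to B\to C\to 0$ with $M\cong\syz^nB$ up to projectives, but $\X$ is only quasi-resolving and need not be closed under extensions, so $B\notin\X$ in general. The paper instead uses the description $\Y=\X\cap\gp\A$: from $0\to L\to M\to N\to 0$ with $L,N\in\Y$, pushing out along $L\hookrightarrow P$ (with $P\in\proj\A$ and cokernel $\syz^{-1}L$) and using $\Ext^1_\A(N,P)=0$ gives $0\to M\to P\oplus N\to\syz^{-1}L\to 0$. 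Now $P\oplus N\in\X$ and $\syz^{-1}L\in\Y\subseteq\X$ by cosyzygy closure, so $M\in\X$ by \emph{kernel}-of-epimorphism closure; and $M\in\gp\A$ since $\gp\A$ is extension closed. (Your claim that $\Y$ is closed under direct summands is also unneeded and may fail for quasi-resolving $\X$; it is not part of the Frobenius or triangulated-subcategory statement.)

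\textbf{Gorenstein projectivity of $\syz^{3n}F$.} In part (1), your ``complementary ingredient'' that $\lhom_{\A}(-,Y)|_{\underline\X}$ is Gorenstein projective for $Y\in\Y$ is vacuous: these representables are already projective in $\mod\underline\X$ by Yoneda, for any $Y\in\X$. Your justification (``a complete projective resolution of $Y$ in $\A$ has its terms in $\X$ and, by Yoneda, becomes a complete projective resolution of the representable'') is also off: the terms of a complete resolution of $Y$ are projective in $\A$, hence zero in $\underline\X$. What is actually needed, and what the paper does, is the following. From the presentation of $F$ one gets (Proposition~\ref{3.3}) a short exact sequence $0\to A\to B\to C\to 0$ in $\X$, and $\syz^{3n}F$ is the kernel of $\lhom_\A(-,\syz^nA)|_{\underline\X}\to\lhom_\A(-,\syz^nB)|_{\underline\X}$. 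Applying $\syz^n$ gives $0\to L\to M\to N\to 0$ with $L,M,N\in\Y$. Now one extends the induced sequence of representables \emph{to the right} using cosyzygies $\syz^{-i}L,\syz^{-i}M,\syz^{-i}N\in\Y\subseteq\X$ (this is where $(\g_n)$ enters), obtaining an acyclic complex of projectives in $\mod\underline\X$ whose $\Hom$ into any representable is again acyclic (Lemma~\ref{half}(2), since $\Ext^1_\A(\Y,\proj\A)=0$). That complex is the complete resolution of $\syz^{3n}F$. The cosyzygy extension to the right is the essential mechanism you have not identified.

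The rest of your outline (the identity $\syz^n\X=\X\cap\gp\A$, parts (3) and (4)) agrees with the paper's approach.
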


The first assertion of Theorem \ref{thma}(1) is a Gorenstein analogue of Theorem \ref{AR}, and letting $n=0$ recovers Theorem \ref{Y}.
In relation to this result, we also consider when the condition $\Ext_R^{>n}(\X,R)=0$ is equivalent to the condition $\Ext_{\mod\underline\X}^{>3n}(\mod\underline\X,\proj(\mod\underline\X))=0$ for a resolving subcategory $\X$ of modules over a commutative ring $R$.
For $n=0$ the latter condition is nothing but the condition that $\mod\underline\X$ is quasi-Frobenius (in the sense of \cite{Y2}), and we can recover \cite[Theorem 4.2]{Y2}.
One also obtains a characterization of Gorenstein local rings $R$ in terms of modules over the stable category $\lmod R$, whose artinian case is none other than \cite[Corollary 4.3]{Y2}.

The second assertion of Theorem \ref{thma}(1) clarifies the structure of the singularity category $\ds(\underline\X)$; for example, each object of $\ds(\underline\X)$ turns out to be isomorphic to a shift of an object in $\mod\underline\X$.
This is a key to deduce the remaining assertions in Theorem \ref{thma}.

Using Theorem \ref{thma}(4) one can obtain various singular equivalences.
For example, let $\X$ be a quasi-resolving subcategory of $\A$ with $\syz^n\X\subseteq\gp\A\subseteq\X$ for some $n\ge0$, e.g., the category of objects of Gorenstein projective dimension at most $n$.
Then $\underline\X$ is singularly equivalent to $\lgp\A$.
In particular, if $R$ is a commutative Gorenstein local ring and $\X$ is a quasi-resolving subcategory of $\mod R$ containing $\cm(R)$, then $\underline\X$ and $\lcm(R)$ are singularly equivalent.
If $R$ is moreover a complete intersection, then $\underline\X$ is singularly equivalent to $\underline{\X\cap\cm(R)}$ for all resolving subcategories $\X$ of $\mod R$.
Combining this with a classification of resolving subcategories given in \cite{stcm} yields that if $R$ is an isolated hypersurface singularity, then $\underline\X$ is singularly equivalent to either $\lcm(R)$ or the zero category $\zero$, so there are only at most two singular equivalence classes.

We are thus interested in asking when $R$ admits exactly one singular equivalence class.
More precisely, for a resolving subcategory $\X$ of $\mod R$ we consider when $\underline\X$ is singularly equivalent to $\zero$, or equivalently, when $\underline\X$ is regular.
We shall prove the following theorem, which characterizes the regularity of stable categories of resolving subcategories.

\begin{thm}\label{thmc}
Let $R$ be a $d$-dimensional nonregular complete local ring with algebraically closed residue field $k$ of characteristic zero.
Then the following are equivalent.
\begin{enumerate}[\rm(1)]
\item
$R$ is Gorenstein, and $\lcm(R)$ is regular.
\item
$R$ is a complete intersection, and $\underline\X$ is regular for every resolving subcategory $\X$ of $\mod R$.
\item
$R$ is a complete intersection, and $\underline\X$ is regular for some resolving subcategory $\X$ of $\mod R$ that contains a module of maximal complexity.
\item
$R$ is a simple hypersurface singularity of type $(\a_1)$, namely, $R\cong k[[x_0,\dots,x_d]]/(x_0^2+\cdots+x_d^2)$.
\end{enumerate}
When one of these conditions is satisfied, $\lcm(R)$ is regular of dimension at most $0$, namely, $\mod\lcm(R)$ has global dimension $0$.
\end{thm}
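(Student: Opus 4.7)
The plan is to establish the cycle $(2)\Rightarrow(3)\Rightarrow(1)\Rightarrow(4)\Rightarrow(2)$. The implication $(2)\Rightarrow(3)$ is immediate upon taking $\X=\mod R$: the residue field $k$ satisfies $\cx_R(k)=\codim R$, the largest possible complexity over a complete intersection.

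For $(4)\Rightarrow(2)$, suppose $R\cong k[[x_0,\dots,x_d]]/(x_0^2+\cdots+x_d^2)$. By Kn\"orrer periodicity $\cm(R)$ has at most two indecomposable objects up to isomorphism; for the non-projective indecomposable $M$ one computes $\End_{\lcm(R)}(M)\cong k$, and there are no non-isomorphism stable maps between distinct indecomposables. Hence $\mod\lcm(R)$ is semisimple, so $\lcm(R)$ is regular of dimension at most $0$, which also delivers the concluding assertion of the theorem. For an arbitrary resolving subcategory $\X\subseteq\mod R$, Theorem~\ref{thma}(4) produces a singular equivalence $\ds(\underline\X)\cong\ds(\underline{\X\cap\cm(R)})$. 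Since $\X\cap\cm(R)$ is a quasi-resolving subcategory of the very small category $\cm(R)$, a direct inspection shows $\underline{\X\cap\cm(R)}$ is regular, whence so is $\underline\X$.

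For $(3)\Rightarrow(1)$, choose $M\in\X$ with $\cx_R(M)=\codim R$. Since $R$ is Gorenstein, $\syz^d M\in\X\cap\cm(R)$ retains maximal complexity, while Theorem~\ref{thma}(4) forces $\underline{\X\cap\cm(R)}$ to be regular. I would then invoke the classification of resolving subcategories of $\cm(R)$ over a complete intersection from \cite{stcm}, which parametrizes them by specialization-closed subsets of a suitable support variety; maximal complexity corresponds to full support, so the resolving closure of $\{\syz^d M\}$ inside $\cm(R)$ is all of $\cm(R)$. A descent argument along this inclusion transfers regularity from $\underline{\X\cap\cm(R)}$ to $\lcm(R)$, which combined with the Gorenstein property of $R$ yields (1).

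The main obstacle is $(1)\Rightarrow(4)$. Given $R$ Gorenstein nonregular with $\lcm(R)$ regular, the first step is to deduce that $\cm(R)$ has finite representation type: $\lcm(R)$ admits Auslander--Reiten triangles (since $R$ is complete Gorenstein), and finite global dimension of $\mod\lcm(R)$ restricts the shape of the stable AR quiver enough to leave only finitely many indecomposables, a Frobenius-category analogue of Auslander's theorem. The Buchweitz--Greuel--Kn\"orrer--Schreyer classification then identifies $R$ as a simple hypersurface singularity. Finally I would rule out all ADE types other than $(\a_1)$ by observing that for those types the stable Auslander algebra has Loewy length at least two because the stable AR quiver carries non-trivial irreducible morphisms between distinct indecomposables, and a direct computation shows $\mod\lcm(R)$ then has infinite global dimension. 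This last elimination is the most technical point and where I expect the proof to require the most care.
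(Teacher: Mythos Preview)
Your cycle $(2)\Rightarrow(3)\Rightarrow(1)\Rightarrow(4)\Rightarrow(2)$ and the treatment of $(4)\Rightarrow(2)$ are fine and close to the paper's. The difficulties are in the other two arrows, and both stem from the same missing idea.

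The engine of the paper's proof is a concrete lemma (Lemma~\ref{midfree}): if $\underline\X$ is regular (with $\X\subseteq\cm(R)$ closed under cosyzygies) and $0\to X\to Y\to Z\to 0$ is a nonsplit short exact sequence in $\X$ with $X,Z$ indecomposable, then $Y$ must be free and $X\cong\syz Z$. This is proved by showing that the cokernel functor $F$ of $\lhom(-,Y)\to\lhom(-,Z)$ is nonzero and projective, hence a summand of $\lhom(-,Z)$; local endomorphism rings then force $\lhom(-,g)=0$, so $Y$ is a stable summand of the indecomposable $X$. Both hard implications in the paper run on this lemma, and your outline has no substitute for it.

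For $(1)\Rightarrow(4)$, your first step assumes $\lcm(R)$ has Auslander--Reiten triangles ``since $R$ is complete Gorenstein''. That is not enough: one needs $R$ to be an isolated singularity for every indecomposable MCM to admit an AR sequence, and isolated singularity is what you are trying to deduce (via finite CM-type). Moreover, even granting that $\mod\lcm(R)$ has global dimension $0$, this only says there are no nonzero stable maps between nonisomorphic indecomposables and that each $\lend(M)$ is a division ring; it does not by itself bound the number of indecomposables. The paper breaks this circularity by taking an MCM approximation $0\to Y\to X\to k\to 0$, observing $\Ext^1_R(M,X)\ne0$ for every nonfree indecomposable MCM $M$, and applying Lemma~\ref{midfree} to a resulting nonsplit extension to get $M\mid\syz^{-1}X$; this gives finite CM-type directly, hence simple singularity, hence isolated singularity. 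Only then are AR sequences invoked, and Lemma~\ref{midfree} applied to them shows every AR sequence has free middle term, which singles out $(\a_1)$ among the ADE quivers.

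For $(3)\Rightarrow(1)$, the classification you cite from \cite{stcm} is for hypersurfaces (with isolated singularity), not arbitrary complete intersections; the support-variety classification of thick subcategories over complete intersections is Stevenson's \cite{S}, and even then one must argue that resolving subcategories of $\cm(R)$ correspond to thick subcategories of $\lcm(R)$. The paper again avoids this: working inside $\X\cap\cm(R)$, maximal complexity of $M$ forces $\Ext_R^\ell(M,X)\ne0$ for some $1\le\ell\le c+1$ and every nonfree indecomposable $X\in\X$, and Lemma~\ref{midfree} yields $X\cong\syz^\ell M$. Thus $\X$ has finitely many indecomposables, is contravariantly finite, and equals $\cm(R)$ by \cite{arg}. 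So the paper never needs a classification theorem here; Lemma~\ref{midfree} does all the work.
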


If an additive category $\C$ is regular of dimension at most $n$ for some $n\ge0$, then $\C$ is regular by definition.
In view of Theorem \ref{thmc}, as of independent interest, we also consider when the converse of this statement holds for $\C=\underline\X$.

The paper is organized as follows.
In Section \ref{sect2}, for later use we state several fundamental results on the structure of modules over the stable category of a subcategory of an abelian category.
In Sections \ref{sect3} and \ref{sect4}, we compare many kinds of conditions on quasi-resolving subcategories.
The first assertion of Theorem \ref{thma} is shown in this section.
In Section \ref{sect5}, we study singular equivalence of the stable categories of resolving subcategories, and give proofs of the remaining assertions of Theorem \ref{thma}.
In Section \ref{sect6}, we consider when the stable category of a resolving subcategory is regular, and prove Theorem \ref{thmc}.

\begin{conv}
Throughout this paper, unless otherwise specified, we use the following convention: All subcategories are assumed to be strictly full (i.e., full and closed under isomorphism).
Let $\A$ be an abelian category with enough projective objects, and denote by $\proj\A$ the full subcategory of projective objects of $\A$.
Let $\ab$ and $\zero$ stand for the category of abelian groups and the zero category, respectively.
Let $R$ be a commutative noetherian ring, and denote by $\mod R$ the category of finitely generated $R$-modules.
All $R$-modules in this paper are assumed to be finitely generated.
A maximal Cohen-Macaulay module is simply called an MCM module.
When $R$ is a Cohen-Macaulay local ring, $\cm(R)$ stands for the full subcategory of $\mod R$ consisting of MCM modules over $R$.
\end{conv}

\section{Modules over stable categories}\label{sect2}

In this section we give several fundamental results on the structure of modules over the stable category of a subcategory of $\A$.
Most of the results are given in \cite{AR,AR3,AR4,AR5,AR6,AR7,AR8} and \cite{Y2} at least essentially, but we give proofs for the convenience of the reader.
Let us begin with recalling the definition of a stable category.

\begin{dfn}
\begin{enumerate}[(1)]
\item
For two objects $M,N$ of $\A$ we define the {\em stable hom-set} as the quotient group $\lhom_\A(M,N):=\Hom_\A(M,N)/\P_\A(M,N)$, where $\P_\A(M,N)$ consists of all morphisms from $M$ to $N$ that factor through objects in $\proj\A$.
For $\A=\mod R$, we simply write $\lhom_R(M,N)$ for $\lhom_\A(M,N)$.
\item
Let $\X$ be a subcategory of $\A$ containing $\proj\A$.
Then the quotient category $\underline\X:=\X/\proj\A$ is called the {\em stable category} of $\X$; the objects of $\underline\X$ are the same as those of $\X$, and the hom-set $\Hom_{\underline\X}(M,N)$ of $M,N\in\underline\X$ is defined as $\lhom_\A(M,N)$.
Hence $\underline\X$ is a full subcategory of $\underline\A$.
\end{enumerate}
\end{dfn}

The following lemma is given in the case where $\A=\mod R$ by \cite[Lemma 2.7]{Y2}, whose proof uses Auslander transposes of modules and does not work for general abelian categories $\A$.

\begin{lem}\label{half}
Let $0\to A\xrightarrow{f} B\xrightarrow{g} C\to0$ be an exact sequence in $\A$.
Let $X$ be an object of $\A$.
\begin{enumerate}[\rm(1)]
\item
The induced sequence $\lhom_\A(X,A)\to\lhom_\A(X,B)\to\lhom_\A(X,C)$ is exact.
\item
The induced sequence $\lhom_\A(C,X)\to\lhom_\A(B,X)\to\lhom_\A(A,X)$ is exact if $\Ext_\A^1(C,\proj\A)=0$.
\end{enumerate}
\end{lem}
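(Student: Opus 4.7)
The plan is to prove both exactness statements by elementary diagram chasing at the level of ordinary $\Hom_\A$, using only that $\lhom_\A$ is $\Hom_\A$ modulo morphisms factoring through $\proj\A$. The compositions of the two displayed maps are zero in each part, since they are induced by $gf=0$, so only exactness at the middle term needs work. In both parts the strategy is the same: given a morphism at the middle term whose image on one side vanishes stably, write the vanishing as a factorization through a projective $P$, modify the given morphism by a morphism coming from $P$, and then invoke the universal property of the remaining map in the short exact sequence.

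For part (1), take $\beta\in\Hom_\A(X,B)$ with $g\beta=ph$ for some $P\in\proj\A$, $h\colon X\to P$, and $p\colon P\to C$. Projectivity of $P$ together with surjectivity of $g$ yields $\widetilde p\colon P\to B$ with $g\widetilde p=p$, so that $g(\beta-\widetilde p h)=0$; by the universal property of $f$ this gives $\alpha\colon X\to A$ with $f\alpha=\beta-\widetilde p h$. Since $\widetilde p h$ factors through $P\in\proj\A$, the classes of $\beta$ and $f\alpha$ coincide in $\lhom_\A(X,B)$, which is the desired exactness.

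For part (2), take $\beta\in\Hom_\A(B,X)$ with $\beta f=qh$ for some $P\in\proj\A$, $h\colon A\to P$, and $q\colon P\to X$. The crucial step uses the hypothesis: applying $\Hom_\A(-,P)$ to the given short exact sequence and invoking $\Ext_\A^1(C,P)=0$ forces $\Hom_\A(B,P)\to\Hom_\A(A,P)$ to be surjective, so $h$ extends to $\widetilde h\colon B\to P$ with $\widetilde h f=h$. Then $(\beta-q\widetilde h)f=0$, which yields $\gamma\colon C\to X$ with $\gamma g=\beta-q\widetilde h$; since $q\widetilde h$ factors through $P$, the classes of $\beta$ and $\gamma g$ coincide in $\lhom_\A(B,X)$. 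The only subtle point is this use of $\Ext^1$-vanishing in (2); everything else is purely formal, so no significant obstacle is anticipated, and the proof avoids the Auslander transpose used in \cite{Y2} entirely.
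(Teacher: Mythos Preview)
Your proof is correct and follows essentially the same approach as the paper's own argument: in both parts you lift through the projective $P$ (using projectivity of $P$ in (1) and surjectivity of $\Hom_\A(B,P)\to\Hom_\A(A,P)$ from the $\Ext^1$-vanishing in (2)), subtract to kill the relevant composite, and then invoke the kernel/cokernel property of the short exact sequence. Apart from notation, the steps and their order match the paper's proof exactly.
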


\begin{proof}
(1) The left-exactness of the functor $\Hom$ shows that the induced sequence
$$
0\to \Hom_\A(X,A)\xrightarrow{\Hom_\A(X,f)} \Hom_\A(X,B)\xrightarrow{\Hom_\A(X,g)}  \Hom_\A(X,C)
$$
is exact.
Let $u:X\to B$ be a morphism in $\A$ such that $\underline{u}$ is in the kernel of $\lhom_\A(X,g):\lhom_\A(X,B)\to\lhom_\A(X,C)$.
Then $gu$ is the composition of some morphisms $a:X \to P$ and $b:P \to C$ in $\A$, where $P$ is a projective object in $\A$.
There is a morphism $c:P \to B$ with $gc=b$.
Hence $g(u-ca)=gu-gca=gu-ba=0$, and we find a morphism $d:X\to A$ such that $u-ca=fd$.
We have $\underline{u}=\underline{fd}$, which is in the image of $\lhom_\A(X,f):\lhom_\A(X,A)\to\lhom_\A(X,B)$.

(2) By the left-exactness of $\Hom$ the induced sequence
$$
0\to\Hom_\A(C,X)\xrightarrow{\Hom_\A(g,X)} \Hom_\A(B,X)\xrightarrow{\Hom_\A(f,X)}  \Hom_\A(A,X)
$$
is exact.
Let $u:B\to X$ be a morphism with $\underline{u}\in\Ker\lhom_\A(f,X)$.
Then $uf$ is the composition of some morphisms $a:A \to P$ and $b:P \to X$ with $P\in\proj\A$.
Since $\Ext_\A^1(C,\proj\A)=0$, the map $\Hom_\A(f,P)$ is surjective, and there exists a morphism $c:B \to P$ such that $cf=a$.
We have $(u-bc)f=0$, and find a morphism $d:C\to X$ with $u-bc=dg$.
It follows that $\underline{u}=\underline{dg}\in\Im\lhom(g,X)$.
\end{proof}

Next, we recall the definition of the category of finitely presented modules over an additive category.

\begin{dfn}\label{2.3}
Let $\C$ be an additive category.
Denote by $\Mod\C$ the {\em functor category} of $\C$, that is, the objects are additive contravariant functors from $\C$ to $\ab$, and the morphisms are natural transformations.
An object and a morphism of $\Mod\C$ are called a {\em (right) $\C$-module} and a {\em $\C$-homomorphism}, respectively.
A $\C$-module $F$ is said to be {\em finitely presented} if there is an exact sequence
$$
\Hom_\C(-,X) \to \Hom_\C(-,Y) \to F \to 0
$$
in the abelian category $\Mod\C$ with $X,Y\in\C$.
The full subcategory of $\Mod\C$ consisting of finitely presented $\C$-modules is denoted by $\mod\C$.
This is sometimes called the {\em Auslander category} of $\C$.
\end{dfn}

\begin{rem}\label{yoneda}
\begin{enumerate}[(1)]
\item
For each $X\in\C$ the functor $\Hom_\C(-,X)$ is a projective object of $\mod\C$, and conversely, any projective object of $\mod\C$ is isomorphic to a direct summand of $\Hom_\C(-,X)$ for some $X\in\C$.
Yoneda's lemma asserts that the assignment $X\mapsto\Hom_\C(-,X)$ makes a fully faithful functor
$$
\C\hookrightarrow\mod\C.
$$
This is called the {\em Yoneda embedding} of $\C$.
Thanks to this, any $\C$-homomorphism $\Hom_\C(-,X) \to \Hom_\C(-,Y)$ can be described as $\Hom_\C(-,f)$ for some morphism $f\in\Hom_\C(X,Y)$.
\item
It is said that {\em $\C$ has pseudokernels} if for each morphism $f:X\to Y$ in $\C$ there exists a morphism $g:Z\to X$ in $\C$ such that the induced sequence
$$
\Hom_\C(-,Z)\xrightarrow{\Hom_\C(-,g)}\Hom_\C(-,X)\xrightarrow{\Hom_\C(-,f)}\Hom_\C(-,Y)
$$
is exact.
This condition is equivalent to saying that $\mod\C$ is abelian; see \cite[Chapter III, \S2]{Au2}.
\end{enumerate}
\end{rem}

Here we recall the definition of the syzygies of an object in an abelian category.

\begin{dfn}[cf. Definition \ref{31}]\label{25}
Let $M$ be an object of $\A$, and let $n$ be a positive integer.
Let $f:P\to M$ be a right ($\proj\A$)-approximation, which is by definition a morphism such that every morphism from an object in $\proj\A$ to $M$ factors through $f$.
Then the kernel of $f$ is called the {\em first syzygy} of $M$ and denoted by $\syz M$.
The {\em $n$th syzygy} $\syz^nM$ of $M$ is defined inductively as $\syz(\syz^{n-1}M)$.
We put $\syz^0M:=M$.
For a subcategory $\X$ of $\A$ we denote by $\syz^n\X$ the subcategory of $\A$ consisting of all $n$th syzygies.
Note that $\syz^n\X$ contains $\proj\A$ for $n>0$.
\end{dfn}

\begin{rem}
Let $M$ be an object of $\A$, and let $n$ be a positive integer.
\begin{enumerate}[(1)]
\item
Since $\A$ is assumed to have enough projective objects, the $n$th syzygy of $M$ exists, and one has an exact sequence
$$
0 \to \syz^nM \to P_{n-1} \to \cdots \to P_0 \to M \to 0
$$
in $\A$ with $P_i\in\proj\A$ for $0\le i\le n-1$.
\item
By Schanuel's lemma, the $n$th syzygy of $M$ is uniquely determined up to projective summands, whence in the stable category $\underline\A$ it is uniquely determined up to isomorphism.
\end{enumerate}
\end{rem}

Now we show the proposition below, which induces from each short exact sequence in $\A$ a long exact sequence in a functor category.

\begin{prop}\label{longex}
Let $\sigma:0\to A\xrightarrow{f} B\xrightarrow{g} C\to0$ be a short exact sequence in $\A$.
\begin{enumerate}[\rm(1)]
\item
There is a sequence
$$
[\cdots\xrightarrow{g_2} \syz^2C \xrightarrow{d_1} \syz A \xrightarrow{f_1} \syz B \xrightarrow{g_1} \syz C \xrightarrow{d} A \xrightarrow{f} B \xrightarrow{g} C]
$$
of morphisms in $\A$ such that one can form a short exact sequence from any consective two morphisms (consisting three objects) by adding some projective object to the middle object.
\item
Let $\X$ be a subcategory of $\A$ containing $\proj\A$.
One has an induced long exact sequence in $\Mod\underline\X$:
\begin{align*}
\cdots&\to\lhom_\A(-,\syz^2B)|_{\underline\X}\to\lhom_\A(-,\syz^2C)|_{\underline\X}\to\lhom_\A(-,\syz A)|_{\underline\X}\to\lhom_\A(-,\syz B)|_{\underline\X}\\
&\to\lhom_\A(-,\syz C)|_{\underline\X}\to\lhom_\A(-,A)|_{\underline\X}\to\lhom_\A(-,B)|_{\underline\X}\to\lhom_\A(-,C)|_{\underline\X}\\
&\to\Ext_\A^1(-,A)|_{\underline\X}\to\Ext_\A^1(-,B)|_{\underline\X}\to\Ext_\A^1(-,C)|_{\underline\X}\to\Ext_\A^2(-,A)|_{\underline\X}\to\Ext_\A^2(-,B)|_{\underline\X}\to\cdots.
\end{align*}
\end{enumerate}
\end{prop}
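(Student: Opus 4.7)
The plan is to build the morphism sequence of (1) by an iterated Horseshoe-plus-pullback construction, and then to deduce (2) by applying Lemma \ref{half}(1) to each of the short exact sequences so obtained, gluing the result to the classical Ext long exact sequence at the middle.

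For (1), I would fix right $\proj\A$-approximations $p_A : P_A \to A$ and $p_C : P_C \to C$ with kernels $\syz A$ and $\syz C$. Since $g$ is surjective and $P_C$ is projective, $p_C$ lifts through $g$, so combining with $p_A$ produces a right $\proj\A$-approximation $P_A\oplus P_C \to B$; by Schanuel's lemma its kernel is isomorphic to $\syz B \oplus P$ for some $P\in\proj\A$, and the Horseshoe lemma yields a short exact sequence $0\to \syz A \to \syz B\oplus P \to \syz C \to 0$ from which the morphisms $f_1,g_1$ are read off. To produce the connecting morphism $d:\syz C\to A$ I would form the pullback of $g$ along $p_C$: the top row of the pullback square splits because $p_C$ lifts through $g$, so the pullback is $A\oplus P_C$, and the kernel of $A\oplus P_C\to B$ is $\syz C$, giving the short exact sequence $0\to \syz C \to A\oplus P_C \to B \to 0$. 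Applying the same two constructions to this new short exact sequence and then to the sequences obtained at each subsequent step produces $g_2,d_1,f_2,\dots$ together with short exact sequences realizing every consecutive pair of morphisms in the statement.

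For (2), it suffices to prove that the sequence becomes exact after evaluation at each $X\in\underline\X$, since exactness in $\Mod\underline\X$ is tested pointwise. The left-hand stable-Hom portion is obtained by applying Lemma \ref{half}(1) to each of the short exact sequences from (1): the projective summand in each middle term dies in $\lhom_\A(X,-)$, so we get exactness of the corresponding three-term stable-Hom sequence, and splicing all of these yields exactness throughout the stable-Hom regime. For the transition to the Ext terms, I would take the classical long exact sequence
\[
\Hom_\A(X,A) \to \Hom_\A(X,B) \to \Hom_\A(X,C) \to \Ext_\A^1(X,A) \to \Ext_\A^1(X,B) \to \cdots
\]
coming from $\sigma$. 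The key observation is that the connecting map $\Hom_\A(X,C)\to\Ext_\A^1(X,A)$ kills $\P_\A(X,C)$: any morphism $X\to C$ that factors through a projective $Q$ lifts through $g$ to a morphism $X\to B$, so the associated pulled-back extension splits. Hence this connecting map factors through $\lhom_\A(X,C)$, which is precisely the gluing point between the stable-Hom portion (already established) and the standard Ext long exact sequence (exact at every spot from $\lhom_\A(X,C)$ onward by classical homological algebra applied to $\sigma$).

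The main obstacle will be the bookkeeping in (1): each Horseshoe step introduces a fresh projective summand in the middle of the short exact sequence, and one must check that the morphisms $f_i,g_i,d_i$ in the statement are well-defined in $\A$ up to the usual ambiguity of syzygies, in such a way that the identifications needed to apply Lemma \ref{half}(1) pass unambiguously to $\Mod\underline\X$. In practice this is resolved by working up to projective summands from the outset, since $\lhom_\A$ is insensitive to such summands and the long exact sequence in (2) is stated entirely in the stable category.
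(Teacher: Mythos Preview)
Your proposal is correct and follows essentially the same strategy as the paper: build the sequence in (1) by a pullback construction, then deduce (2) by applying Lemma~\ref{half}(1) to each short exact sequence and gluing with the classical Ext long exact sequence via the observation that the connecting map kills $\P_\A(X,C)$.

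The one noteworthy difference is in the implementation of (1). You invoke two constructions---Horseshoe to obtain $f_1,g_1$ and pullback to obtain $d$---and then iterate both. The paper instead iterates a \emph{single} construction: given $0\to A'\to B'\xrightarrow{g'} C'\to 0$, take a projective surjection $\pi:P\to C'$ and form the pullback of $g'$ and $\pi$ to obtain $0\to\syz C'\to A'\oplus P\to B'\to 0$; applying this repeatedly produces all of the short exact sequences at once, with each consecutive pair of morphisms sitting inside one of them. This avoids the redundancy in your version (your Horseshoe step on the pullback sequence reproduces morphisms already obtained) and makes the bookkeeping you flag at the end essentially automatic, since each step outputs concrete morphisms rather than requiring reconciliation of two parallel constructions.
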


\begin{proof}
(1) Taking a surjection $\pi:P\to C$ with $P\in\proj\A$ and making the pullback diagram of $g$ and $\pi$, we get an exact sequence
$$
0 \to \syz C \xrightarrow{\binom{d}{*}} A\oplus P \xrightarrow{(f,*)} B \to 0.
$$
Iterating this procedure gives rise to exact sequences
\begin{align*}
& 0 \to \syz B \xrightarrow{\binom{g_1}{*}} \syz C\oplus P' \xrightarrow{(d,*)} A \to 0,\qquad
0 \to \syz A \xrightarrow{\binom{f_1}{*}} \syz B\oplus P'' \xrightarrow{(g_1,*)} \syz C \to 0,\\
& 0 \to \syz^2C \xrightarrow{\binom{d_1}{*}} \syz A\oplus P''' \xrightarrow{(f_1,*)} \syz B\ \to 0,\qquad
0 \to \syz^2B \xrightarrow{\binom{g_2}{*}} \syz^2C\oplus P'''' \xrightarrow{(d_1,*)} \syz A\ \to 0,\quad\dots,
\end{align*}
where $P',P'',P''',P'''',\dots$ are projective.
Thus we obtain a sequence
$$
[\cdots\xrightarrow{g_2} \syz^2C \xrightarrow{d_1} \syz A \xrightarrow{f_1} \syz B \xrightarrow{g_1} \syz C \xrightarrow{d} A \xrightarrow{f} B \xrightarrow{g} C]
$$
of morphisms, which is what we want.

(2) Let $X\in\underline\X$.
Using Lemma \ref{half} for the sequence obtained in (1) yields an exact sequence
\begin{align}\label{lhomlhom}
\cdots&\to\lhom_\A(X,\syz^2C)\to\lhom_\A(X,\syz A)\to\lhom_\A(X,\syz B)\\
&\to\lhom_\A(X,\syz C)\to\lhom_\A(X,A)\to\lhom_\A(X,B)\to\lhom_\A(X,C).\notag
\end{align}
On the other hand, there is an exact sequence 
$$
0 \to \Hom_\A(X,A) \to \Hom_\A(X,B) \to \Hom_\A(X,C) \xrightarrow{\delta} \Ext_\A^1(X,A) \to \Ext_\A^1(X,B) \to \cdots.
$$
Let $P$ be a projective object of $\A$, and let $\alpha:X\to P$ and $\beta:P\to C$ be morphisms in $\A$.
Then we have $\delta(\beta\alpha)=\Ext_\A^1(\beta\alpha,A)(\sigma)=\Ext_\A^1(\alpha,A)(\Ext_\A^1(\beta,A)(\sigma))=0$ as $\Ext_\A^1(\beta,A)(\sigma)=0$.
Hence $\delta(\P_\A(X,C))=0$.
Therefore, $\delta$ induces a homomorphism $\delta':\lhom_\A(X,C)\to\Ext_\A^1(X,A)$ such that the following diagram commutes, where the vertical maps are canonical surjections.
$$
\xymatrix{
0\ar[r] & \Hom_\A(X,A)\ar[r]\ar@{->>}[d] & \Hom_\A(X,B)\ar[r]\ar@{->>}[d] & \Hom_\A(X,C)\ar[r]^\delta\ar@{->>}[d] & \Ext_\A^1(X,A)\ar[r] & \Ext_\A^1(X,B)\to\cdots\\
& \lhom_\A(X,A)\ar[r] & \lhom_\A(X,B)\ar[r] & \lhom_\A(X,C)\ar[ru]_{\delta'}
}
$$
Using Lemma \ref{half} and diagram chasing, we obtain an exact sequence
\begin{equation}\label{lhomExt}
\lhom_\A(X,A) \to \lhom_\A(X,B) \to \lhom_\A(X,C) \xrightarrow{\delta'} \Ext_\A^1(X,A) \to \Ext_\A^1(X,B) \to \cdots.
\end{equation}
Splicing \eqref{lhomlhom} and \eqref{lhomExt} yields an exact sequence as in the assertion.
\end{proof}

Let us give the definitions of quasi-resolving and resolving subcategories, which are the main targets studied in this paper.

\begin{dfn}
\begin{enumerate}[(1)]
\item
A subcategory $\X$ of $\A$ is called {\em quasi-resolving} if $\X$ satisfies the following conditions.
\begin{enumerate}[(a)]
\item
$\X$ contains $\proj\A$.
\item
$\X$ is closed under finite direct sums, that is, for a finite number of objects $X_1,\dots,X_n$ in $\X$ the direct sum $X_1\oplus\cdots\oplus X_n$ is in $\X$.
\item
$\X$ is closed under kernels of epimorphisms, that is, for each short exact sequence $0\to L\to M\to N\to0$ in $\A$, if $M$ and $N$ are in $\X$, then so is $L$.
\end{enumerate}
\item
A quasi-resolving subcategory $\X$ of $\A$ is called {\em resolving} if $\X$ satisfies the following conditions.
\begin{enumerate}[(a)]
\item
$\X$ is closed under direct summands, namely, if $X$ is an object in $\X$ and $Y$ is a direct summand of $X$ in $\A$, then $Y$ is in $\X$.
\item
$\X$ is closed under extensions, namely, for each short exact sequence $0\to L\to M\to N\to0$ in $\A$, if $L$ and $N$ are in $\X$, then so is $M$.
\end{enumerate}
For a subcategory $\C$ of $\A$ we denote by $\res\C$ the {\em resolving closure} of $\X$, that is, the smallest resolving subcategory of $\A$ containing all the objects in $\C$.
\end{enumerate}
\end{dfn}

\begin{rem}\label{resdefrem}
The notion of a resolving subcategory has been introduced by Auslander and Bridger \cite{AB}, but we should remark that in \cite{AB} a resolving subcategory is defined to be a quasi-resolving subcategory closed under extensions.
In our sense, a resolving subcategory is also assumed to be closed under direct summands.
\end{rem}

As a trivial example, the subcategory of $\A$ consisting of objects with projective dimension less than $n$ for each $1\le n\le\infty$ is a resolving subcategory.
The catgeory $\cm(R)$ of MCM modules over a Cohen-Macaulay local ring $R$ is a resolving subcategory of $\mod R$.
There are a lot of other examples of resolving subcategories; one can find some of them in \cite[Example 2.4]{res}.

\begin{rem}\label{lmpn}
Let $\X$ be a quasi-resolving subcategory of $\A$.
\begin{enumerate}[(1)]
\item
Let $M\in\A$ and $P\in\proj\A$.
Then the equivalence
$$
M\in\X\ \Longleftrightarrow\ M\oplus P\in\X
$$
holds true.
Indeed, it is trivial that if $M$ is in $\X$, then so is $M\oplus P$.
The opposite implication follows from the exact sequence $0 \to M\xrightarrow{\binom{1}{0}} M\oplus P\xrightarrow{(0,1)}P\to0$.
\item
Let $f:M\to N$ be a morphism in $\X$.
Then one can choose a morphism $g:P\to N$ with $P\in\proj\A$ such that the morphism $(f,g):M\oplus P\to N$ is an epimorphism.
Taking the kernel, one gets a short exact sequence
$$
0 \to L \to M\oplus P \xrightarrow{(f,g)} N \to 0
$$
in $\A$.
Note that all of the three objects in this exact sequence are in $\X$.
\end{enumerate}
\end{rem}

Finally, we give a structure result on finitely presented modules over the stable category of a quasi-resolving subcategory.
The following proposition extends the result \cite[Proposition 3.3]{Y2} on module categories to abelian categories.
The first assertion is also a generalization of the result \cite[Proposition 1.1]{AR} on resolving subcategories to quasi-resolving ones.

\begin{prop}\label{3.3}
Let $\X$ be a quasi-resolving subcategory of $\A$.
\begin{enumerate}[\rm(1)]
\item
The category $\mod\underline\X$ is an abelian category with enough projective objects.
\item
For each object $F\in\mod\underline\X$ there exists an exact sequence $0 \to A \to B \to C \to 0$ in $\A$ with $A,B,C\in\X$ which induces a projective resolution of $F$ in $\mod\underline\X$:
\begin{align*}
\cdots \to \lhom_\A(-,\syz^2C)|_{\underline\X} & \to \lhom_\A(-,\syz A)|_{\underline\X} \to \lhom_\A(-,\syz B)|_{\underline\X} \to \lhom_\A(-,\syz C)|_{\underline\X} \\
&\to \lhom_\A(-,A)|_{\underline\X} \to \lhom_\A(-,B)|_{\underline\X} \to \lhom_\A(-,C)|_{\underline\X} \to F \to 0.
\end{align*}
\end{enumerate}
\end{prop}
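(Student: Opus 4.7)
The plan is to deduce both parts from the tools already set up: Remarks \ref{yoneda} and \ref{lmpn}, Lemma \ref{half}, and Proposition \ref{longex}. The central recurring observation is that for any $P\in\proj\A$ and any $X\in\underline\X$, every morphism $X\to P$ trivially factors through $P$, so $\lhom_\A(-,P)|_{\underline\X}=0$; hence a direct summand which is projective in $\A$ is invisible to the stable hom on $\underline\X$.

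For (1), by Remark \ref{yoneda}(2) it is enough to verify that $\underline\X$ has pseudokernels, since the existence of enough projectives in $\mod\underline\X$ is then built into the definition of finite presentation via Remark \ref{yoneda}(1). To build a pseudokernel of a morphism in $\underline\X$ represented by $f\colon X\to Y$ in $\X$, I will use Remark \ref{lmpn}(2) to enlarge $f$ into an epimorphism $(f,h)\colon X\oplus P\twoheadrightarrow Y$ with $P\in\proj\A$, obtaining a short exact sequence $0\to Z\to X\oplus P\to Y\to 0$ in $\A$ whose three terms all lie in $\X$ by the quasi-resolving property. The composite $g\colon Z\hookrightarrow X\oplus P\twoheadrightarrow X$ is my candidate pseudokernel morphism; to check universality I apply Lemma \ref{half}(1) to this sequence, restrict to $\underline\X$, and use the vanishing above to identify $\lhom_\A(-,X\oplus P)|_{\underline\X}$ with $\lhom_\A(-,X)|_{\underline\X}$ and the induced map with the one coming from $f$.

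For (2), I will begin with an arbitrary finite presentation $\lhom_\A(-,B')|_{\underline\X}\to\lhom_\A(-,C)|_{\underline\X}\to F\to 0$ in $\mod\underline\X$, which by Yoneda corresponds to some morphism $h\colon B'\to C$ in $\X$. Applying Remark \ref{lmpn}(2) once more extends $h$ to a short exact sequence $0\to A\to B\to C\to 0$ in $\A$ with $B=B'\oplus P$ for some $P\in\proj\A$ and with the surjection $B\twoheadrightarrow C$ restricting to $h$ on the first summand; quasi-resolvingness then places $A,B,C$ in $\X$. Under the canonical identification $\lhom_\A(-,B)|_{\underline\X}\cong\lhom_\A(-,B')|_{\underline\X}$ the map induced by $B\to C$ coincides with the presentation map, so its cokernel remains $F$. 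Finally, feeding this short exact sequence into Proposition \ref{longex}(2) produces a long exact sequence in $\Mod\underline\X$, and the portion strictly to the left of the first $\Ext_\A^1(-,A)|_{\underline\X}$ term is precisely the desired projective resolution of $F$, as each $\lhom_\A(-,\syz^n X)|_{\underline\X}$ is representable and hence projective in $\mod\underline\X$ by Remark \ref{yoneda}(1).

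I do not foresee any serious obstacle: the argument amounts to matching the short exact sequence constructed from $h$ with the original presentation of $F$, a bookkeeping task that is made immediate by the vanishing principle for stable homs into projective objects.
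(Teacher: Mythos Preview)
Your proposal is correct and follows essentially the same approach as the paper's own proof: for (1) you extend a morphism in $\X$ to an epimorphism via Remark \ref{lmpn}(2), apply Lemma \ref{half}(1), and use the vanishing $\lhom_\A(-,P)|_{\underline\X}=0$ to obtain a pseudokernel; for (2) you start from a presentation of $F$, use Yoneda and Remark \ref{lmpn}(2) to produce the short exact sequence, and then invoke Proposition \ref{longex}(2). The paper does precisely this, only with slightly different notation and leaving the vanishing principle implicit.
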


\begin{proof}
(1) Let $f:M\to N$ be a morphism in $\X$.
There is an exact sequence $0 \to L \to M\oplus P \xrightarrow{(f,g)}N \to 0$ in $\A$ with $L\in\X$ and $P\in\proj\A$ (see Remark \ref{lmpn}).
By Lemma \ref{half}(1) this induces an exact sequence
$$
\lhom_\A(-,L)|_{\underline\X}\to\lhom_\A(-,M)|_{\underline\X}\xrightarrow{\lhom_\A(-,f)|_{\underline\X}}\lhom_\A(-,N)|_{\underline\X}
$$
in $\mod\underline\X$.
Thus $\underline\X$ has pseudokernels, and the assertion follows from Remark \ref{yoneda}(2).

(2) There is an exact sequence $\lhom_\A(-,B)|_{\underline\X} \xrightarrow{\phi} \lhom_\A(-,C)|_{\underline\X} \to F \to 0$ with $B,C\in\X$.
By Remark \ref{yoneda}(1) we find a morphism $f:B\to C$ such that $\phi=\lhom_\A(-,f)|_{\underline\X}$.
We have an exact sequence $0 \to A \to B\oplus P \xrightarrow{(f,g)} C \to 0$ with $A\in\X$ and $P\in\proj\A$ by Remark \ref{lmpn}.
Applying Proposition \ref{longex}(2) to this short exact sequence, we obtain such an exact sequence as in the assertion.
\end{proof}

\begin{rem}
We should remark that $\mod\X$ is not necessarily abelian; $\X$ does not necessarily have pseudokernels, even if it is resolving.
(For example, consider the subcategory of $\mod R$ consisting of totally reflexive modules in the case where $R$ is not Gorenstein.)
This is one of the reasons why we take the stable category $\underline\X$ of $\X$ and study $\underline\X$ rather than $\X$ itself.
\end{rem}

We recall here the definition of the singularity category of an additive category.

\begin{dfn}
Let $\C$ be an additive category such that $\mod\C$ is abelian.
The {\em singularity category} of $\C$ is by definition the Verdier quotient
$$
\ds(\C)=\db(\mod\C)/\kb(\proj(\mod\C)).
$$
\end{dfn}

Note that $\ds(\mod R)$ is nothing but the (usual) singularity category $\ds(R)$ of $R$ (see the second paragraph of Section \ref{sect1}).

Let $\X$ be a quasi-resolving subcategory of $\A$.
Then $\mod\underline\X$ is an abelian category by Proposition \ref{3.3}(1).
So one can define the singularity category $\ds(\underline\X)$ of $\underline\X$.

\section{Several conditions on quasi-resolving subcategories}\label{sect3}

In this section we compare several conditions on quasi-resolving subcategories, motivated by the work of Yoshino \cite{Y2}.
The point is to investigate the Frobenius subcategory of Gorenstein projective objects, which is invisible in \cite{Y2}, so that we extend main results of \cite{Y2}.
We also show that one of the conditions makes a certain equivalence of triangulated categories, which plays an important role in later sections.

To state those conditions, we need to recall the definitions of a cosyzygy and a Gorenstein projective object in an abelian category.

\begin{dfn}[cf. Definition \ref{25}]\label{31}
Let $M$ be an object of $\A$, and let $n$ be a positive integer.
Let $g:M\to P$ be a left ($\proj\A$)-approximation, i.e., a morphism such that any morphism from $M$ to an object in $\proj\A$ factors through $g$.
Then the cokernel of $g$ is called the {\em first cosyzygy} of $M$ and denoted by $\syz^{-1}M$.
The {\em $n$th cosyzygy} $\syz^{-n}M$ of $M$ is defined inductively as $\syz^{-1}(\syz^{-(n-1)}M)$.
For a subcategory $\X$ of $\A$ we denote by $\syz^{-n}\X$ the subcategory of $\A$ consisting of all $n$th cosyzygies.
This contains $\proj\A$.
\end{dfn}

\begin{rem}
Let $M\in\A$ and $n>0$.
Suppose that $\syz^{-n}M$ exists.
Then a complex
$$
(0 \to M \to P_{-1} \to \cdots \to P_{-n} \to \syz^{-n}M \to 0)
$$
in $\A$ with $P_{-i}\in\proj\A$ for $1\le i\le n$ is induced.
This complex is not necessarily exact.
\end{rem}

\begin{dfn}\label{33}
Let $M$ be an object of $\A$.
\begin{enumerate}[(1)]
\item
The {\em projective dimension} of $M$ in $\A$, denoted by $\pd_\A M$, is defined as the infimum of the integers $n\ge0$ such that there exists an exact sequence
$$
0 \to P_n \to P_{n-1} \to \cdots \to P_1 \to P_0 \to M \to 0
$$
in $\A$ with $P_i\in\proj\A$ for all $0\le i\le n$.
\item
An exact sequence
$$
P:\ \cdots \xrightarrow{\partial_2} P_1 \xrightarrow{\partial_1} P_0 \xrightarrow{\partial_0} P_{-1} \xrightarrow{\partial_{-1}} \cdots
$$
of projective objects of $\A$ is called a {\em complete resolution} of $M$ if $\Hom_\A(P,Q)$ is again exact for all $Q\in\proj\A$ and the image of $\partial_0$ is isomorphic to $M$.
\item
An object admitting a complete resolution is called {\em Gorenstein projective}.
The subcategory of $\A$ consisting of Gorenstein projective objects is denoted by $\gp\A$.
Since $\gp\A$ contains $\proj\A$, the stable category $\lgp\A:=\underline{\gp\A}$ is defined.
\item
The {\em Gorenstein projective dimension} of $M$ in $\A$, denoted by $\Gpd_\A M$, is defined as the infimum of the integers $n\ge0$ such that there exists an exact sequence
$$
0 \to G_n \to G_{n-1} \to \cdots \to G_1 \to G_0 \to M \to 0
$$
in $\A$ with $G_i\in\gp\A$ for all $0\le i\le n$.
\end{enumerate}
\end{dfn}

\begin{rem}\label{3.4}
\begin{enumerate}[(1)]
\item
A Gorenstein projective object of $\mod R$ is called a {\em totally reflexive module}.
The Gorenstein projective dimension of an object $M$ of $\mod R$ is called the {\em Gorenstein dimension} ({\em G-dimension} for short) of $M$, and denoted by $\gdim_RM$.
\item
Let $M$ be a Gorenstein projective object of $\A$.
Then $M$ admits the $n$th cosyzygy for all $n\ge1$, which is uniquely determined up to projective summands, and the induced complex
$$
(0 \to M \to P_{-1} \to \cdots \to P_{-n} \to \syz^{-n}M \to 0)
$$
in $\A$ with $P_{-i}\in\proj\A$ for $1\le i\le n$ is exact.
Furthermore, taking $\syz$ makes an autoequivalence of the category $\lgp\A$ and taking $\syz^{-1}$ makes its quasi-inverse.
\item
The category $\gp\A$ is a {\em Frobenius category} \cite[Chapter I, \S2]{Ha} and a resolving subcategory of $\A$, and the stable category $\lgp\A$ is a triangulated category.
These statements are shown in \cite[Proposition 2.13]{Be} except the fact that $\gp\A$ is closed under direct summands (see Remark \ref{resdefrem}).
This fact is proved by noting the following.

Suppose that there is an exact sequence $0 \to M\oplus N \xrightarrow{(f,g)} P \to L \to 0$ in $\A$ with $P\in\proj\A$ and $\Ext_\A^1(L,\proj\A)=0$.
Letting $X,Y$ be the cokernels of $f,g$ respectively, we get exact sequences $0\to M\xrightarrow{f}P\to X\to0$ and $0\to N\xrightarrow{g}P\to Y\to0$.
There is a commutative diagram
$$
\xymatrix{
&&0&0\\
0\ar[r] & M\oplus N\ar[r]^(.6){(f,g)} & P\ar[r]\ar[u] & L\ar[r]\ar[u] & 0 \\
0\ar[r] & M\oplus N\ar[r]^{\left(\begin{smallmatrix}
f&0\\
0&g
\end{smallmatrix}\right)}\ar@{=}[u] & P\oplus P\ar[r]\ar[u]_{(1,1)} & X\oplus Y\ar[r]\ar[u] & 0 \\
&&P\ar@{=}[r]\ar[u]_{\binom{1}{-1}} & P\ar[u] &\\
&&0\ar[u] & 0\ar[u] &
}
$$
with exact rows and columns.
The exact sequence $0 \to P \to X\oplus Y\to L\to0$ splits as $\Ext_\A^1(L,P)=0$, and hence $\Ext_\A^1(X,\proj\A)=0$.
Using this argument, one observes that if there is an exact sequence $0\to M\oplus N\to P^0\to P^1\to P^2\to\cdots$ whose dual by each projective is also exact, then there is an exact sequence $0\to M\to P^0\to P^0\oplus P^1\to P^0\oplus P^1\oplus P^2\to\cdots$ whose dual by each projective is also exact.
\item
Let $M\in\A$ and $n\ge0$.
Then $\Gpd_\A M\le n$ if and only if $\syz^nM\in\gp\A$.
\end{enumerate}
\end{rem}

Let us introduce regularity and Gorensteinness for a quasi-resolving subcategory.

\begin{dfn}
Let $\C$ be an additive category such that $\mod\C$ is abelian.
We say that $\C$ is {\em regular} (respectively, {\em Gorenstein}) if every object of $\mod\C$ has finite projective dimension (respectively, Gorenstein projective dimension).
A regular (respectively, Gorenstein) category $\C$ is called {\em of dimension at most $n$} if every object of $\mod\C$ has projective dimension (respectively, Gorenstein projective dimension) at most $n$, or equivalently, $\syz^n(\mod\C)$ coincides with $\proj(\mod\C)$ (respectively, $\gp(\mod\C$)).
\end{dfn}

Here are typical examples of regular and Gorenstein categories.

\begin{prop}\label{referee}
\begin{enumerate}[\rm(1)]
\item
Every abelian category is regular of dimension at most $2$.
\item
Every triangulated category is Gorenstein of dimension at most $0$.
\item
Let $\Lambda$ be an Iwanaga-Gorenstein ring of (selfinjective) dimension at most $n$.
Then the category $\proj\Lambda$ of finitely generated projective $\Lambda$-modules is Gorenstein of dimension at most $n$.
\end{enumerate}
\end{prop}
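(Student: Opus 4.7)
My plan is to handle the three parts separately, using the Yoneda embedding as a common thread while exploiting different structural features of each ambient category.

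For (1), any $F\in\mod\A$ arises as the cokernel of $f_*=\Hom_\A(-,f):\Hom_\A(-,X)\to\Hom_\A(-,Y)$ for some morphism $f:X\to Y$ in $\A$. Taking $K=\Ker f$ in the abelian category $\A$, I would claim that the four-term sequence
$$0\to\Hom_\A(-,K)\to\Hom_\A(-,X)\xrightarrow{f_*}\Hom_\A(-,Y)\to F\to 0$$
is exact in $\mod\A$, which bounds $\pd_{\mod\A}F$ by $2$. The only non-formal check is exactness at $\Hom_\A(-,X)$, which reduces, at each test object $Z\in\A$, to the identification $\{g:Z\to X\mid fg=0\}=\Hom_\A(Z,K)$ coming from the universal property of the kernel; injectivity at the left and the remaining exactness are formal.

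For (2), given $F\in\mod\T$ with presentation coming from $f:X\to Y$, I would complete $f$ to a distinguished triangle $X\to Y\to Z\to X[1]$ and apply the cohomological functor $\Hom_\T(-,-)$ to this triangle and all its shifts. The result is a doubly infinite exact sequence of representable functors in $\Mod\T$ in which $F$ identifies with $\Im(\Hom_\T(-,Y)\to\Hom_\T(-,Z))$. To promote this to a complete resolution of $F$ in the sense of Definition \ref{33}(2), I must verify that the complex remains exact after applying $\Hom_{\mod\T}(-,Q)$ for every projective $Q\in\mod\T$; since any such $Q$ is a direct summand of some $\Hom_\T(-,W)$, Yoneda reduces this to the exactness of $\Hom_\T(-,W)$ applied to the original triangle and its shifts, which is immediate from the triangulated axioms. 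Hence $F\in\gp(\mod\T)$, so $\mod\T=\gp(\mod\T)$ and $\T$ is Gorenstein of dimension at most $0$. This is the step I expect to be the main obstacle, since threading the triangulated axioms through Yoneda to verify the dualized exactness requires the most care.

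For (3), I would invoke the standard equivalence $\mod(\proj\Lambda)\simeq\mod\Lambda$ given by evaluation at $\Lambda$, under which representable functors correspond to finitely generated projective modules and which therefore preserves both projective and Gorenstein projective dimension; the Iwanaga-Gorenstein hypothesis on $\Lambda$ ensures every finitely generated $\Lambda$-module has Gorenstein projective dimension at most $n$, so transporting through the equivalence yields the same bound for every object of $\mod(\proj\Lambda)$.
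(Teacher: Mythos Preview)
Your proposal is correct and follows essentially the same approach as the paper's proof: for (1) you take the kernel in the abelian category to get a length-$2$ projective resolution, for (2) you complete to a distinguished triangle and use Yoneda to verify that the resulting doubly infinite exact sequence of representables is a complete resolution, and for (3) you transport the bound through the equivalence $\mod(\proj\Lambda)\simeq\mod\Lambda$. The paper's argument differs only cosmetically, citing \cite[Theorem 10.2.14]{EJ} for the fact that $\syz^n(\mod\Lambda)=\gp(\mod\Lambda)$ in part (3) and deferring the equivalence to Lemma~\ref{end}.
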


\begin{proof}
Let $\C$ be an additive category, and pick $F\in\mod\C$.
There is an exact sequence $\Hom_\C(-,M)\xrightarrow{f}\Hom_\C(-,N)\to F\to0$.
By Yoneda's lemma we find a morphism $g\in\Hom_\C(M,N)$ with $f=\Hom_\C(-,g)$.

(1) Suppose that $\C$ is abelian.
Then there is an exact sequence $0\to L\to M\xrightarrow{g}N$ in $\C$, which induces an exact sequence
$$
0\to\Hom_\C(-,L)\to\Hom_\C(-,M)\xrightarrow{f}\Hom_\C(-,N)\to F\to0
$$
in $\mod\C$.
This shows that $F$ has projective dimension at most $2$.

(2) Suppose that $\C$ is triangulated.
Then there is an exact triangle $L\to M\xrightarrow{g}N\rightsquigarrow$ in $\C$, which induces an exact sequence
$$
\cdots\to\Hom_\C(-,\sus^{-1}N)\to\Hom_\C(-,L)\to\Hom_\C(-,M)\xrightarrow{f}\Hom_\C(-,N)\to\Hom_\C(-,\sus L)\to\cdots
$$
in $\mod\C$, whose dual by each projective object of $\mod\C$ is also exact by Remark \ref{yoneda}(1) and Yoneda's lemma.
Hence this exact sequence gives a complete resolution of $F$, and so $F$ is Gorenstein projective.

(3) One has $\syz^n(\mod\Lambda)=\gp(\mod\Lambda)$ by \cite[Theorem 10.2.14]{EJ}.
The assertion follows from this; see also Lemma \ref{end} stated later.
\end{proof}

Auslander and Reiten \cite{AR4} implicitly prove the following.

\begin{thm}[Auslander-Reiten]\label{ar}
Let $\X$ be a quasi-resolving subcategory of $\A$.
Suppose that every object in $\X$ has projective dimension at most $n$ in $\A$.
Then $\underline\X$ is regular of dimension at most $3n-1$.
\end{thm}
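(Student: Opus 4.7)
The plan is to read off the projective dimension of an arbitrary $F\in\mod\underline\X$ directly from the canonical resolution constructed in Proposition \ref{3.3}(2). That resolution is produced by starting from a finite presentation $\lhom_\A(-,B)|_{\underline\X}\to\lhom_\A(-,C)|_{\underline\X}\to F\to 0$ with $B,C\in\X$, enlarging it to a short exact sequence $0\to A\to B\oplus P\to C\to 0$ with $A\in\X$ (using Remark \ref{lmpn}(2)), and applying the long exact sequence of Proposition \ref{longex}(2). Since $A,B,C$ all lie in $\X$, the periodicity in the syzygy construction places $\lhom_\A(-,\syz^{k}C)|_{\underline\X}$, $\lhom_\A(-,\syz^{k}B)|_{\underline\X}$, $\lhom_\A(-,\syz^{k}A)|_{\underline\X}$ at homological degrees $3k,\,3k+1,\,3k+2$ respectively.

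The second step is the vanishing input. By hypothesis every object of $\X$ has projective dimension at most $n$ in $\A$, so for each $X\in\{A,B,C\}$ the syzygy $\syz^{n}X$ is projective in $\A$. Because stable hom into a projective object is zero, this gives
\[
\lhom_\A(-,\syz^{k}X)|_{\underline\X}=0\qquad\text{for every }k\ge n\text{ and }X\in\{A,B,C\}.
\]
Thus every term in the resolution at homological degree $\ge 3n$ vanishes, and the resolution collapses to
\[
0\to\lhom_\A(-,\syz^{n-1}A)|_{\underline\X}\to\cdots\to\lhom_\A(-,A)|_{\underline\X}\to\lhom_\A(-,B)|_{\underline\X}\to\lhom_\A(-,C)|_{\underline\X}\to F\to 0,
\]
which has length $3n-1$. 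Exactness at the leftmost surviving spot follows because the kernel of $\lhom_\A(-,\syz^{n-1}A)|_{\underline\X}\to\lhom_\A(-,\syz^{n-1}B)|_{\underline\X}$ coincides with the image of the next (now zero) map in the original resolution. Consequently $\pd_{\mod\underline\X}F\le 3n-1$, and since $F$ was arbitrary this shows $\underline\X$ is regular of dimension at most $3n-1$.

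There is essentially no deep obstacle once Proposition \ref{3.3}(2) is in hand; the whole proof is a bookkeeping argument on the positions of the three families of syzygy terms. The only point that requires a moment of care is that the sequence from Proposition \ref{3.3}(2) is produced from short exact sequences in $\A$ (not a priori from a resolution of $F$ in $\mod\underline\X$), so one must remember that each term is actually projective in $\mod\underline\X$ via the Yoneda embedding, and that the syzygies $\syz^{k}X$ in $\A$ are only defined up to projective summands—this ambiguity is harmless because it is absorbed when passing to $\underline\X$. If desired, the factor $3$ in the bound can be tracked back to the fact that each short exact sequence $0\to A\to B\to C\to 0$ in $\A$ contributes exactly three consecutive terms (one for each of $A,B,C$) to the resolution per syzygy level.
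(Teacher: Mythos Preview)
Your argument is correct and is exactly the one the paper's machinery is designed to give: the paper does not prove Theorem~\ref{ar} directly (it is attributed to Auslander--Reiten), but the identical reasoning appears in the paper's proof of Proposition~\ref{sgzfgl}, implications $(4)\Rightarrow(3)$ and $(4)\Rightarrow(1)$, where Proposition~\ref{3.3}(2) is used in the same way to truncate the resolution once $\syz^nC$ becomes projective. One small remark: you only need $\syz^nC$ projective to force the degree-$3n$ term to vanish and hence make the map at degree $3n-1$ injective; the additional vanishing of the $\syz^nB$ and $\syz^nA$ terms, while true under your hypothesis, is not needed for the bound.
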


We are interested in establishing a Gorenstein analogue of this theorem.
For this, we introduce the following condition.

\begin{dfn}
\begin{enumerate}[(1)]
\item
We say that a subcategory $\Y$ of $\gp\A$ is closed under cosyzygies provided that if $Y$ is an object in $\Y$, then so is $\syz^iY$ for all $i<0$.
\item
Let $\X$ be a quasi-resolving subcategory of $\A$, and let $n\ge0$ be an integer.
We say that $\X$ satisfies the condition $(\g_n)$ if $\syz^n\X$ is contained in $\gp\A$ and closed under cosyzygies.
\end{enumerate}
\end{dfn}

From now on we fix a quasi-resolving subcategory $\X$ of $\A$ and an integer $n\ge0$, and name several statements for convenience.
\begin{enumerate}
\item[$\ca$]
$\X$ satisfies $(\g_n)$.
\item[$\cb$]
$\underline\X$ is Gorenstein of dimension at most $3n$.
\item[$\cc$]
Every object in $\X$ has Gorenstein projective dimension at most $n$.
\item[$\cd$]
$\Ext_\A^{>n}(\X,\proj\A)=0$.
\item[$\ce$]
$\Ext_{\mod\underline\X}^{>3n}(\mod\underline\X,\proj(\mod\underline\X))=0$.
\end{enumerate}
It is not hard to observe that the implications $\ca\Rightarrow\cc\Rightarrow\cd$ and $\cb\Rightarrow\ce$ hold.
In what follows we consider other implications.
Before that, we investigate what the condition $\ca$ means.
In fact, this condition can be interpreted in several other ways.

\begin{prop}\label{3.5}
Let $\X$ be a quasi-resolving subcategory of $\A$.
The following are equivalent for $n\ge0$.
\begin{enumerate}[\rm(1)]
\item
$\X$ satisfies $(\g_n)$.
\item
$\syz^n\X$ is contained in $\gp\A$ and $\syz^i(\syz^n\X)$ is contained in $\X$ for all $i\in\Z$.
\item
Each object in $\syz^n\X$ has a complete resolution the images of whose differential morphisms are in $\X$.
\item
One has $\Ext_\A^{>n}(\X,\proj\A)=0$ and every object $M$ in $\syz^n\X$ admits an exact sequence $0\to M\to P\to M'\to0$ with $P\in\proj\A$ and $M'\in\syz^n\X$.
\item
One has $\Ext_\A^{>n}(\X,\proj\A)=0$ and there exists a subcategory $\Y$ of $\A$ with $\syz^n\X\subseteq\Y\subseteq\X$ such that each $Y\in\Y$ admits an exact sequence $0\to Y\to P\to Y'\to0$ with $P\in\proj\A$ and $Y'\in\Y$.
\end{enumerate}
\end{prop}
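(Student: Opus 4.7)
The plan is to establish the cycle of implications $(1) \Rightarrow (2) \Rightarrow (3) \Rightarrow (4) \Rightarrow (5) \Rightarrow (1)$. The implication $(4) \Rightarrow (5)$ is immediate by taking $\Y = \syz^n\X$.

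For $(1) \Rightarrow (2)$, since $\X$ is closed under kernels of epimorphisms (it is quasi-resolving), the standard exact sequence $0 \to \syz M \to P \to M \to 0$ shows $\syz\X \subseteq \X$, and by iteration $\syz^i(\syz^n\X) \subseteq \X$ for all $i \geq 0$. For $i < 0$, the inclusion $\syz^n\X \subseteq \gp\A$ from $(\g_n)$ guarantees that cosyzygies exist, and cosyzygy-closure of $\syz^n\X$ together with $\syz^n\X \subseteq \X$ yields $\syz^{-|i|}(\syz^n\X) \subseteq \syz^n\X \subseteq \X$. For $(2) \Rightarrow (3)$, any $M \in \syz^n\X$ lies in $\gp\A$ and hence admits a complete resolution whose images are the $\syz^i M$ for $i \in \Z$; these lie in $\X$ by $(2)$. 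For $(3) \Rightarrow (4)$, the existence of a complete resolution for each $M \in \syz^n\X$ exhibits $\syz^n\X \subseteq \gp\A$, and dimension shifting then forces $\Ext_\A^{>n}(\X, \proj\A) = \Ext_\A^{>0}(\syz^n\X, \proj\A) = 0$; the first cosyzygy sequence $0 \to M \to P \to \syz^{-1}M \to 0$ extracted from the complete resolution gives the required short exact sequence, and $\syz^{-1}M$ lies in $\syz^n\X$ because $\syz^{-1}M$ is stably $\syz^n(\syz^{-n-1}M)$ with $\syz^{-n-1}M \in \X$ by $(3)$.

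The main obstacle is the closing implication $(5) \Rightarrow (1)$. The strategy is first to boost the hypothesis $\Ext_\A^{>n}(\X, \proj\A) = 0$ to the sharper statement $\Ext_\A^{>0}(\Y, \proj\A) = 0$. Iterating the short exact sequence supplied by $(5)$ produces, for any $Y \in \Y$, an infinite coresolution $0 \to Y \to P^0 \to P^1 \to \cdots$ whose images $Y^{(k)}$ all lie in $\Y \subseteq \X$; applying $\Ext_\A(-, Q)$ to each short exact sequence $0 \to Y^{(k)} \to P^k \to Y^{(k+1)} \to 0$ (with $Q \in \proj\A$) and using projectivity of $P^k$ yields the identification $\Ext_\A^j(Y, Q) \cong \Ext_\A^{j+k}(Y^{(k)}, Q)$ for all $j \geq 1$ and $k \geq 0$. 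For any fixed $j \geq 1$, choosing $k$ with $j + k > n$ kills the right-hand side by hypothesis, so $\Ext_\A^j(Y, Q) = 0$. Now fix $M \in \syz^n\X \subseteq \Y$ and write $M = \syz^n N$ with $N \in \X$. Since $\syz^i M = \syz^{n+i}N \in \syz^n\X \subseteq \Y$ for $i \geq 0$, both a projective resolution of $M$ and the coresolution from $(5)$ stay exact after applying $\Hom_\A(-, Q)$ for every $Q \in \proj\A$; splicing them at $M$ yields a complete resolution, so $M \in \gp\A$ and $\syz^n\X \subseteq \gp\A$. Closure under cosyzygies then follows from the stable identity $\syz^{-i}M \cong \syz^n(M^{(n+i)})$ valid in $\gp\A$, with $M^{(n+i)} \in \Y \subseteq \X$.
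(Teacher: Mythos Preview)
Your proof is correct and follows essentially the same cycle of implications as the paper's proof. The only organizational difference is in $(5)\Rightarrow(1)$: you first establish the auxiliary fact $\Ext_\A^{>0}(\Y,\proj\A)=0$ via the coresolution and then use it, whereas the paper immediately observes that each image $\Im\partial_i$ of the coresolution lies in $\syz^n\X$ (because $\Im\partial_i=\syz^n(\Im\partial_{i-n})$ with $\Im\partial_{i-n}\in\Y\subseteq\X$) and invokes $\Ext_\A^{>0}(\syz^n\X,\proj\A)=0$ from dimension shifting. One small point of phrasing: in $(3)\Rightarrow(4)$ and at the end of $(5)\Rightarrow(1)$ you write ``stably'', but in fact, working with the specific complete resolution (respectively, the specific coresolution) in hand, the identities $\syz^{-1}M=\syz^n(\syz^{-n-1}M)$ and $\syz^{-i}M=\syz^n(M^{(n+i)})$ hold on the nose, which is what places these objects literally in $\syz^n\X$; the stable formulation is correct but slightly weaker than what you actually have.
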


\begin{proof}
(1) $\Rightarrow$ (2):
The implication is obvious.

(2) $\Rightarrow$ (3):
Let $M$ be an object in $\syz^n\X$.
Then $M$ is Gorenstein projective, and admits a complete resolution $\cdots \xrightarrow{\partial_2} P_1 \xrightarrow{\partial_1} P_0 \xrightarrow{\partial_0} P_{-1} \xrightarrow{\partial_{-1}} \cdots$.
For each $i\in\Z$ the image of $\partial_i$ is isomorphic to $\syz^iM$ (up to projective summands), which is in $\syz^i(\syz^n\X)$ and so in $\X$.

(3) $\Rightarrow$ (4):
Let $M\in\syz^n\X$.
Then $M$ has a complete resolution $\cdots \xrightarrow{\partial_2} P_1 \xrightarrow{\partial_1} P_0 \xrightarrow{\partial_0} P_{-1} \xrightarrow{\partial_{-1}} \cdots$ such that $\Im\partial_i$ is in $\X$ for all $i\in\Z$.
Hence $\Ext_\A^{>0}(M,\proj\A)=0$, and there is an exact sequence $0 \to M \to P_{-1} \to N \to 0$ with $N:=\Im\partial_{-1}$.
Note that $N=\syz^n(\Im\partial_{-n-1})$, which belongs to $\syz^n\X$.

(4) $\Rightarrow$ (5):
The implication follows by setting $\Y:=\syz^n\X$.

(5) $\Rightarrow$ (1):
Let $M$ be an object in $\syz^n\X$.
Take a projective resolution
\begin{equation}\label{syzygies}
\cdots \xrightarrow{\partial_2} P_1 \xrightarrow{\partial_1} P_0 \to M \to 0
\end{equation}
of $M$.
It is seen that $\Im\partial_i\in\syz^n\X$ for all $i>0$.
As $\Ext_\A^{>0}(\syz^n\X,\proj\A)=0$, the $Q$-dual of \eqref{syzygies} is again exact for all $Q\in\proj\A$.
Since $M$ is in $\Y$, so there is an exact sequence $0 \to M \to P_{-1} \to M' \to 0$ with $P_{-1}\in\proj\A$ and $M'\in\Y$.
Hence there is an exact sequence $0 \to M' \to P_{-2} \to M'' \to 0$ with $P_{-2}\in\proj\A$ and $M''\in\Y$.
Iterating this procedure gives rise to an exact sequence
\begin{equation}\label{cosyzygies}
0 \to M \to P_{-1} \xrightarrow{\partial_{-1}} P_{-2} \xrightarrow{\partial_{-2}} \cdots
\end{equation}
with $P_i\in\proj\A$ and $\Im\partial_i\in\Y$ for all $i<0$.
Since $\Im\partial_i=\syz^n(\Im\partial_{i-n})\in\syz^n\X$ and $\Ext_\A^{>0}(\syz^n\X,\proj\A)=0$, the $Q$-dual of \eqref{cosyzygies} is an exact sequence for all $Q\in\proj\A$.
Splicing \eqref{syzygies} and \eqref{cosyzygies} yields a complete resolution of $M$, which shows that $M$ is Gorenstein projective, and $\syz^iM=\Im\partial_i$ belongs to $\syz^n\X$ for all $i\in\Z$.
\end{proof}

The following result shows that $\cd$ implies $\ce$.

\begin{prop}\label{qf}
Let $n$ be a nonnegative integer, and let $\X$ be a quasi-resolving subcategory of $\A$.
If $\Ext_\A^{>n}(\X,\proj\A)=0$, then $\Ext_{\mod\underline\X}^{>3n}(\mod\underline\X,\proj(\mod\underline\X))=0$.
\end{prop}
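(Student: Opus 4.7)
The plan is to reduce the problem to the vanishing of $\Ext$-groups against the representable $\underline\X$-modules, and then to unwind that vanishing via Yoneda and Lemma~\ref{half}(2) applied to the syzygy short exact sequences produced by Proposition~\ref{longex}(1). Since every projective object of $\mod\underline\X$ is a direct summand of $\lhom_\A(-,X)|_{\underline\X}$ for some $X\in\X$ (Remark~\ref{yoneda}(1)), it suffices to fix $F\in\mod\underline\X$ and $X\in\X$ and show $\Ext_{\mod\underline\X}^{k}(F,\lhom_\A(-,X)|_{\underline\X})=0$ for all $k>3n$.

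By Proposition~\ref{3.3}(2), $F$ admits a projective resolution $P_\bullet\to F$ arising from a short exact sequence $0\to A\to B\to C\to 0$ in $\X$, with $P_{3j}=\lhom_\A(-,\syz^jC)|_{\underline\X}$, $P_{3j+1}=\lhom_\A(-,\syz^jB)|_{\underline\X}$, and $P_{3j+2}=\lhom_\A(-,\syz^jA)|_{\underline\X}$ for all $j\ge0$. Applying $\Hom_{\mod\underline\X}(-,\lhom_\A(-,X)|_{\underline\X})$ and using the Yoneda isomorphism, I obtain a cochain complex whose $k$th term is $\lhom_\A(Q_k,X)$, where $Q_k\in\X$ is the representing object of $P_k$ (so $Q_{3j}=\syz^jC$, etc.); moreover, this cochain complex is precisely the one gotten by applying $\lhom_\A(-,X)$ to the morphism sequence $[\cdots\to\syz^2C\to\syz A\to\syz B\to\syz C\to A\to B\to C]$ of Proposition~\ref{longex}(1).

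The key observation is that each consecutive pair $Q_{k+1}\to Q_k\to Q_{k-1}$ of morphisms extends, by construction in Proposition~\ref{longex}(1), to a short exact sequence $0\to Q_{k+1}\to Q_k\oplus P'\to Q_{k-1}\to 0$ in $\A$ with $P'\in\proj\A$. By Lemma~\ref{half}(2), the cochain complex is therefore exact at $\lhom_\A(Q_k,X)$ as soon as $\Ext_\A^1(Q_{k-1},\proj\A)=0$. Writing $k-1=3j+r$ with $r\in\{0,1,2\}$, one has $Q_{k-1}=\syz^jM$ for some $M\in\{A,B,C\}\subseteq\X$, and the standard dimension shift yields $\Ext_\A^1(\syz^jM,\proj\A)=\Ext_\A^{j+1}(M,\proj\A)$.

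For $k>3n$, i.e.\ $k\ge 3n+1$, we have $j=\lfloor(k-1)/3\rfloor\ge n$, so $j+1>n$, and the hypothesis $\Ext_\A^{>n}(\X,\proj\A)=0$ forces $\Ext_\A^1(Q_{k-1},\proj\A)=0$. Hence the cochain complex is exact at position $k$ for every $k>3n$, which gives $\Ext_{\mod\underline\X}^{k}(F,\lhom_\A(-,X)|_{\underline\X})=0$ for $k>3n$, completing the proof. The only delicate point is the index bookkeeping that matches the projective resolution from Proposition~\ref{3.3}(2) with the short exact sequences supplied by Proposition~\ref{longex}(1); once that match-up is transparent, Lemma~\ref{half}(2) finishes the job.
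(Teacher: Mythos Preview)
Your proof is correct and follows essentially the same approach as the paper: take the projective resolution of $F$ from Proposition~\ref{3.3}(2), apply Yoneda to rewrite $\Hom_{\mod\underline\X}(P_k,\lhom_\A(-,X)|_{\underline\X})$ as $\lhom_\A(Q_k,X)$, and then invoke Lemma~\ref{half}(2) on the short exact sequences supplied by Proposition~\ref{longex}(1), using the hypothesis to kill $\Ext_\A^1(Q_{k-1},\proj\A)$. The only cosmetic difference is that the paper first reduces to the single degree $3n+1$ (via dimension shift in $\mod\underline\X$, since syzygies of $F$ again lie in $\mod\underline\X$) and then checks exactness at that spot using the short exact sequence $0\to\syz^nX_2\to\syz^nX_1\to\syz^nX_0\to0$, whereas you verify exactness at every $k>3n$ directly; your indexing is correct and the argument goes through.
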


\begin{proof}
Let $F$ be an object in $\mod\underline\X$.
By virtue of Proposition \ref{3.3} there is an exact sequence
\begin{equation}\label{210}
0 \to X_{2} \to X_{1} \to X_{0} \to 0
\end{equation}
in $\A$ with $X_0,X_1,X_2\in\X$ which induces a projective resolution
$$
\cdots \to \lhom_\A(-,\syz X_{0})|_{\underline\X} \to \lhom_\A(-,X_{2})|_{\underline\X} \to \lhom_\A(-,X_{1})|_{\underline\X} \to \lhom_\A(-,X_{0})|_{\underline\X} \to F \to 0
$$
of $F$ in $\mod\underline\X$.
In view of Remark \ref{yoneda}(1), we have only to show that $\Ext_{\mod\underline\X}^{3n+1}(F,\lhom_\A(-,Y)|_{\underline\X})=0$ for $Y\in\X$.
The induced complex
\begin{align*}
\big(\Hom_{\mod\underline\X}(\lhom_\A(-,\syz^nX_0)|_{\underline\X},\lhom_\A(-,Y)|_{\underline\X})&\to\Hom_{\mod\underline\X}(\lhom_\A(-,\syz^nX_1)|_{\underline\X},\lhom_\A(-,Y)|_{\underline\X})\\
&\to\Hom_{\mod\underline\X}(\lhom_\A(-,\syz^nX_2)|_{\underline\X},\lhom_\A(-,Y)|_{\underline\X})\big)
\end{align*}
is isomorphic to the induced complex
$$
\big(\lhom_\A(\syz^nX_0,Y)\to\lhom_\A(\syz^nX_1,Y)\to\lhom_\A(\syz^nX_2,Y)\big)
$$
by Remark \ref{yoneda}(1) again.
This comes from the short exact sequence $0\to\syz^nX_2\to\syz^nX_1\to\syz^nX_0\to0$ obtained by applying $\syz^n$ to \eqref{210}.
Since $\Ext_\A^1(\syz^nX_0,\proj\A)=0$, it is exact by Lemma \ref{half}(2).
\end{proof}

The main result of this section is the following theorem, which is a Gorenstein analogue of Theorem \ref{ar} due to Auslander and Reiten.
It not only shows that the implication $\ca\Rightarrow\cb$ holds, but also establishes an equivalence of triangulated categories which analyzes the strucuture of singularity categories.

\begin{thm}\label{main3}
Let $n$ be a nonnegative integer.
Let $\X$ be a quasi-resolving subcategory of $\A$ satisfying $(\g_n)$.
Then $\underline\X$ is Gorenstein of dimension at most $3n$, and there is a triangle equivalence
$$
\ds(\underline\X)\cong\lgp(\mod\underline\X).
$$
\end{thm}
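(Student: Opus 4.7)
The strategy is to first establish the Gorenstein dimension bound; the triangle equivalence $\ds(\underline\X)\cong\lgp(\mod\underline\X)$ will then follow immediately from the theorem of Beligiannis recalled in the introduction, applied to $\C=\underline\X$. So the entire task is to prove that $\syz^{3n}F\in\gp(\mod\underline\X)$ for every $F\in\mod\underline\X$.

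Fix $F$ and use Proposition \ref{3.3}(2) to produce a short exact sequence $0\to A\to B\to C\to 0$ in $\X$ whose associated projective resolution of $F$ places $\lhom_\A(-,\syz^kC)|_{\underline\X}$, $\lhom_\A(-,\syz^kB)|_{\underline\X}$, $\lhom_\A(-,\syz^kA)|_{\underline\X}$ at positions $3k$, $3k+1$, $3k+2$. Its truncation at positions $\ge 3n$ is already a projective resolution $\cdots\to P_{3n+1}\to P_{3n}\to\syz^{3n}F\to 0$. To certify $\syz^{3n}F$ as Gorenstein projective, I will extend this resolution to the right by constructing projectives $Q_{-1},Q_{-2},\ldots$ using cosyzygies in $\syz^n\X$.

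Applying Proposition \ref{longex}(1) to $0\to A\to B\to C\to 0$ and taking $\syz^n$ produces a short exact sequence $0\to\syz^nA\to\syz^nB\oplus P\to\syz^nC\to 0$ in $\A$, whose three terms all lie in $\syz^n\X\subseteq\gp\A$ by hypothesis $(\g_n)$. Being Gorenstein projective, each admits every cosyzygy, and Proposition \ref{3.5} guarantees these cosyzygies remain in $\syz^n\X$. Dualizing the pullback construction of Proposition \ref{longex}(1) (i.e.\ iteratively taking pushouts along left $\proj\A$-approximations) I extend the leftward morphism chain into a bi-infinite sequence
\[
\cdots\to\syz^{n+1}C\to\syz^nA\to\syz^nB\to\syz^nC\to\syz^{-1}(\syz^nA)\to\syz^{-1}(\syz^nB)\to\syz^{-1}(\syz^nC)\to\cdots
\]
of morphisms in $\syz^n\X$, every three consecutive terms of which form a short exact sequence after adding a projective summand to the middle. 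Applying $\lhom_\A(-,?)|_{\underline\X}$ delivers a bi-infinite complex of projectives in $\mod\underline\X$ whose positive half reproduces the tail of the resolution past $P_{3n}$; I let $Q_{-1},Q_{-2},\ldots$ denote its negative half.

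To see the spliced complex is a complete resolution of $\syz^{3n}F$, exactness at every position is Lemma \ref{half}(1) applied to the constituent short exact sequences; Hom-exactness against an arbitrary projective $\lhom_\A(-,Y)|_{\underline\X}$ with $Y\in\X$ reduces via Yoneda (Remark \ref{yoneda}(1)) to exactness of $\lhom_\A(?,Y)$ on those sequences, which is Lemma \ref{half}(2), applicable because each right-hand term sits in $\syz^n\X\subseteq\gp\A$ and hence has $\Ext_\A^1(-,\proj\A)=0$. Finally, the image $\Im(P_{3n}\to Q_{-1})$ equals $P_{3n}/\ker(P_{3n}\to Q_{-1})=P_{3n}/\syz^{3n+1}F=\syz^{3n}F$ by exactness of both the old and new resolutions at $P_{3n}$. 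The main technical obstacle is the rightward pushout extension of the morphism chain: one must carefully absorb projective summands and verify, using Proposition \ref{3.5} at each stage, that the extension genuinely takes place inside $\X$, so that Lemma \ref{half} is available at each splicing step. Condition $(\g_n)$ enters precisely here, providing both Gorenstein-projectivity in $\A$ (needed for the existence of cosyzygies and for Lemma \ref{half}(2)) and the containment of every resulting (co)syzygy back in $\X$ (needed for Lemma \ref{half}(1) and for each new term to yield a genuine projective in $\mod\underline\X$).
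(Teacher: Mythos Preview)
Your proposal is correct and follows essentially the same route as the paper: produce the short exact sequence via Proposition~\ref{3.3}(2), apply $\syz^n$ to land in $\syz^n\X\subseteq\gp\A$, extend the resulting short exact sequence to a bi-infinite chain using cosyzygies (available by $(\g_n)$ and Proposition~\ref{3.5}), and then use Lemma~\ref{half}(1),(2) together with Yoneda to verify that the resulting acyclic complex of projectives in $\mod\underline\X$ is a complete resolution of $\syz^{3n}F$. The triangle equivalence then follows from Beligiannis's result exactly as you say.
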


\begin{proof}
The second assertion follows from the first assertion and \cite[Corollary 4.13]{Be}.
So let us show the first assertion.
It is obvious that $\gp(\mod\underline\X$) is contained in $\syz^{3n}(\mod\underline\X$).
Let $F$ be an object in $\mod\underline\X$.
Proposition \ref{3.3} yields a short exact sequence
\begin{equation}\label{ses}
0 \to A \to B \to C \to 0
\end{equation}
in $\A$ with $A,B,C\in\X$ which induces a projective resolution
\begin{align*}
\cdots \to \lhom_\A(-,\syz^2C)|_{\underline\X} & \to \lhom_\A(-,\syz A)|_{\underline\X} \to \lhom_\A(-,\syz B)|_{\underline\X} \to \lhom_\A(-,\syz C)|_{\underline\X} \\
&\to \lhom_\A(-,A)|_{\underline\X} \to \lhom_\A(-,B)|_{\underline\X} \to \lhom_\A(-,C)|_{\underline\X} \to F \to 0
\end{align*}
of $F$ in $\mod\underline\X$.
Then $G:=\Ker(\lhom_\A(-,\syz^nA)|_{\underline\X} \to \lhom_\A(-,\syz^nB)|_{\underline\X})$ is in $\syz^{3n}(\mod\underline\X)$.
We want to show that $G$ is Gorenstein projective.
Applying $\syz^n$ to \eqref{ses}, we have an exact sequence
$$
0 \to L \xrightarrow{f} M \xrightarrow{g} N \to 0,
$$
where $L=\syz^nA$, $M=\syz^nB$ and $N=\syz^nC$.
Applying Proposition \ref{longex}(1) to the induced short exact sequence $0\to\syz^{-i}L\to\syz^{-i}M\to\syz^{-i}N\to0$ of objects in $\syz^n\X$ for each $i>0$ (or making a dual argument to the proof of Proposition \ref{longex}(1)), we obtain a sequence
$$
[\cdots\to\syz^2N\to\syz L\to\syz M\to\syz N\to L\to M\to N\to\syz^{-1}L\to\syz^{-1}M\to\syz^{-1}N\to\syz^{-2}L\to\cdots]
$$
of morphisms in $\A$ with $\syz^iC\in\X$ for all $i\in\Z$ and $C\in\{L,M,N\}$ such that any consective two morphisms become a short exact sequence by adding some projective object.
Hence we get an exact sequence
\begin{align}\label{compres}
\cdots&\to\lhom_\A(-,\syz^2N)|_{\underline\X}\to\lhom_\A(-,\syz L)|_{\underline\X}\to\lhom_\A(-,\syz M)|_{\underline\X}\to\lhom_\A(-,\syz N)|_{\underline\X}\\
&\to\lhom_\A(-,L)|_{\underline\X}\to\lhom_\A(-,M)|_{\underline\X}\to\lhom_\A(-,N)|_{\underline\X}\to\lhom_\A(-,\syz^{-1}L)|_{\underline\X}\notag\\
&\to\lhom_\A(-,\syz^{-1}M)|_{\underline\X}\to\lhom_\A(-,\syz^{-1}N)|_{\underline\X}\to\lhom_\A(-,\syz^{-2}L)|_{\underline\X}\to\cdots\notag
\end{align}
of projective objects in $\mod\underline\X$.
Applying $\lhom_\A(-,Y)|_{\underline\X}$ with $Y\in\X$, we have an exact sequence
$$
\cdots\to\lhom_\A(\syz^{-1}L,Y)\to\lhom_\A(N,Y)\to\lhom_\A(M,Y)\to\lhom_\A(L,Y)\to\lhom_\A(\syz N,Y)\to\cdots;
$$
see Remark \ref{yoneda}(1) and Lemma \ref{half}(2).
Therefore \eqref{compres} gives a complete resolution of $G$, and thus $G$ is a Gorenstein projective object of $\mod\underline\X$.
\end{proof}

In the rest of this section, we give several applications of our results.
First, applying Proposition \ref{qf} and Theorem \ref{main3} for $n=0$ yields the following corollary, which recovers \cite[Theorems 3.5 and 3.7]{Y2}.

\begin{cor}[Yoshino]
Let $\X$ be a quasi-resolving subcategory of $\A$.
\begin{enumerate}[\rm(1)]
\item
If $\Ext_\A^{>0}(\X,\proj\A)=0$, then $\mod\underline\X$ is a quasi-Frobenius category.
\item
Suppose that $\X$ satisfies $(\g_0)$.
Then one has $\mod\underline\X=\gp(\mod\underline\X)$.
In particular, $\mod\underline\X$ is a Frobenius category, whose stable category is triangle equivalent to $\ds(\underline\X)$.
\end{enumerate}
\end{cor}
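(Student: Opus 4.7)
The plan is to derive both parts by specializing the general results of this section to the case $n=0$.

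For part~(1), the hypothesis $\Ext_\A^{>0}(\X,\proj\A)=0$ is exactly condition~$\cd$ with $n=0$. I will apply Proposition~\ref{qf} directly to conclude
$$
\Ext_{\mod\underline\X}^{>0}(\mod\underline\X,\proj(\mod\underline\X))=0,
$$
which is the defining property of the quasi-Frobenius condition used in \cite{Y2}; combined with Proposition~\ref{3.3}(1), which ensures that $\mod\underline\X$ is an abelian category with enough projective objects, this gives the claim.

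For part~(2), the hypothesis $(\g_0)$ is condition~$\ca$ with $n=0$, so Theorem~\ref{main3} applies and yields at once that $\underline\X$ is Gorenstein of dimension at most $0$ together with a triangle equivalence $\ds(\underline\X)\cong\lgp(\mod\underline\X)$. Unwinding the definition of ``Gorenstein of dimension at most $0$'' gives that every object of $\mod\underline\X$ has Gorenstein projective dimension zero, so $\mod\underline\X=\gp(\mod\underline\X)$. The Frobenius property of $\mod\underline\X$ is then the general statement recorded in Remark~\ref{3.4}(3), namely that $\gp$ of any abelian category with enough projectives is Frobenius, and the equality $\mod\underline\X=\gp(\mod\underline\X)$ rewrites the equivalence above as $\ds(\underline\X)\cong\underline{\mod\underline\X}$.

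Since both parts reduce to invoking results already proved, there is no real obstacle; the only care required is to unpack the definition of ``Gorenstein of dimension at most $0$'' and to recognise that the vanishing $\Ext_{\mod\underline\X}^{>0}(\mod\underline\X,\proj(\mod\underline\X))=0$ is what Yoshino calls quasi-Frobenius.
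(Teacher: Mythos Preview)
Your proposal is correct and follows essentially the same approach as the paper: the paper simply states that the corollary follows by applying Proposition~\ref{qf} and Theorem~\ref{main3} with $n=0$, which is precisely what you do. Your additional references to Proposition~\ref{3.3}(1) and Remark~\ref{3.4}(3) merely make explicit some details the paper leaves implicit.
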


Recall that a {\em thick subcategory} of a triangulated category is by definition a triangulated subcategory closed under direct summands.

\begin{rem}
The above corollary especially says that the following statements hold for a Gorenstein local ring $R$, where $\lcm(R):=\underline{\cm(R)}$ is the stable category of $\cm(R)$.
\begin{enumerate}[(1)]
\item
Let $\X$ be a resolving subcategory of $\mod R$ contained in $\cm(R)$.
Then $\mod\underline\X$ is a quasi-Frobenius category.
\item
Let $\C$ be a thick subcategory of $\lcm(R)$.
Then $\mod\C$ is a Frobenius category.
\end{enumerate}
\end{rem}

For each $n\ge0$, denote by $\Gpd_n(\A)$ the subcategory of $\A$ consisting of objects having Gorenstein projective dimension at most $n$.
Using Remark \ref{3.4} and Theorem \ref{main3}, we have the following result.

\begin{cor}\label{3.14}
\begin{enumerate}[\rm(1)]
\item
The subcategory $\Gpd_n(\A)$ of $\A$ is resolving and satisfies $\syz^n(\Gpd_n(\A))=\gp\A$.
\item
The subcategory $\Gpd_n(\A)$ of $\A$ satisfies $(\g_n)$.
\item
The stable category $\underline{\Gpd_n(\A)}$ is Gorenstein of dimension at most $3n$.
\item
There is a triangle equivalence $\ds(\underline{\Gpd_n(\A)})\cong\lgp(\mod\underline{\Gpd_n(\A)})$.
\end{enumerate}
\end{cor}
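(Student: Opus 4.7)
The plan is to verify the four assertions in order, treating (1) as the only substantive step and deriving (2)--(4) from it via Theorem \ref{main3}.

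For part (1), I would first check the four axioms of a resolving subcategory for $\Gpd_n(\A)$. Containment of $\proj\A$ is immediate since projectives have Gorenstein projective dimension zero. Closure under finite direct sums and direct summands follows from the identity $\Gpd_\A(M\oplus N)=\max\{\Gpd_\A M,\Gpd_\A N\}$, which reduces via the syzygy characterization in Remark \ref{3.4}(4) to the fact (Remark \ref{3.4}(3)) that $\gp\A$ is itself closed under direct sums and direct summands. Closure under kernels of epimorphisms and under extensions is where I would do the only real work: given a short exact sequence $0\to L\to M\to N\to 0$ in $\A$, Proposition \ref{longex}(1) produces (up to projective summands) a short exact sequence $0\to\syz^n L\to\syz^n M\to\syz^n N\to 0$; if two of the three outer terms have $\Gpd\le n$, then two of $\syz^n L,\syz^n M,\syz^n N$ lie in $\gp\A$ by Remark \ref{3.4}(4), and since $\gp\A$ is resolving (Remark \ref{3.4}(3)) it is closed under kernels of epimorphisms and extensions, so the third also lies in $\gp\A$, giving the $\Gpd$ bound on the third outer term. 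For the equality $\syz^n(\Gpd_n(\A))=\gp\A$, the inclusion $\subseteq$ is again Remark \ref{3.4}(4), while $\supseteq$ uses that every Gorenstein projective object $G$ is isomorphic to the $n$th syzygy of $\syz^{-n}G$, which itself lies in $\gp\A\subseteq\Gpd_n(\A)$ by Remark \ref{3.4}(2).

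Parts (2)--(4) are then essentially formal. The condition $(\g_n)$ asks that $\syz^n\X$ sit inside $\gp\A$ and be closed under cosyzygies; part (1) gives the equality $\syz^n(\Gpd_n(\A))=\gp\A$, and $\gp\A$ is closed under cosyzygies by Remark \ref{3.4}(2), so (2) holds. With (2) established, parts (3) and (4) follow by applying Theorem \ref{main3} to $\X=\Gpd_n(\A)$. The only potential obstacle is the closure verification in (1), specifically making sure the dimension bound $n$ is preserved under kernels of epimorphisms and extensions; but this is a routine consequence of the syzygy characterization and the fact, already in hand, that $\gp\A$ is resolving, so no real difficulty is expected.
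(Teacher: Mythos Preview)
Your proposal is correct and follows essentially the same route as the paper, which simply records the corollary as an immediate consequence of Remark \ref{3.4} and Theorem \ref{main3}; you have merely unpacked those references into explicit steps. The only cosmetic point is that ``two of the three outer terms'' should read ``two of the three terms,'' but the argument is sound.
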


Next, we apply our theorem to complete intersections.
Let $R$ be a local ring with maximal ideal $\m$.
We say that $R$ is a {\em complete intersection} if its $\m$-adic completion is the residue ring of a regular local ring by an ideal generated by a regular sequence.
The {\em complete intersection dimension} ({\em CI-dimension} for short) of an $R$-module $M$, denoted by $\cidim_RM$, is defined as the infimum of the quantities
$$
\pd_S (M\otimes_RR')-\pd_SR',
$$
where $R \to R' \leftarrow S$ runs over the quasi-deformations of $R$.
Here, a diagram $R \overset{f}{\to} R' \overset{g}{\leftarrow} S$ of homomorphisms of local rings is called a {\em quasi-deformation} of $R$ if $f$ is faithfully flat and $g$ is a surjection whose kernel is generated by an $S$-sequence.
The notion of CI-dimension has been introduced by Avramov, Gasharov and Peeva \cite{AGP} to which we refer the reader for details of CI-dimension.

\begin{cor}\label{cia}
Let $R$ be a local ring.
\begin{enumerate}[\rm(1)]
\item
Let $n$ be a nonnegative integer.
Let $\X$ be a resolving subcategory of $\mod R$ all of whose objects have CI-dimension at most $n$.
Then $\X$ satisfies $(\g_n)$.
Hence $\underline\X$ is Gorenstein of dimension at most $3n$, and there is a triangle equivalence $\ds(\underline\X)\cong\lgp(\mod\underline\X)$.
\item
Let $R$ be a complete intersection of dimension $d$.
Let $\X$ be a resolving subcategory of $\mod R$.
Then $\X$ satisfies $(\g_d)$.
Hence $\underline\X$ is Gorenstein of dimension at most $3d$, and there is a triangle equivalence $\ds(\underline\X)\cong\lgp(\mod\underline\X)$.
\end{enumerate}
\end{cor}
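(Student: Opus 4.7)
The plan is to derive both parts from Theorem \ref{main3}: it suffices to check the condition $(\g_n)$ (respectively $(\g_d)$) for the given resolving subcategory, since Theorem \ref{main3} then immediately delivers that $\underline\X$ is Gorenstein of dimension at most $3n$ and that $\ds(\underline\X)\cong\lgp(\mod\underline\X)$.

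I would first reduce (2) to (1). By the Avramov-Gasharov-Peeva theorem, every finitely generated module over a complete intersection $R$ of dimension $d$ satisfies $\cidim_R M\le d$, so any resolving subcategory of $\mod R$ falls under the hypothesis of (1) with $n=d$. Hence once (1) is established, (2) follows at once.

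For (1), my plan is to verify $(\g_n)$ via Proposition \ref{3.5}(4), which demands (a) $\Ext_R^{>n}(\X,R)=0$, and (b) every $M\in\syz^n\X$ fits into a short exact sequence $0\to M\to P\to M'\to 0$ with $P$ projective and $M'\in\syz^n\X$. Part (a) is immediate from the AGP inequality $\gdim_R N\le\cidim_R N$ (valid whenever the right-hand side is finite): the hypothesis forces $\gdim_R N\le n$ for every $N\in\X$, which yields both $\Ext_R^{>n}(\X,R)=0$ and $\syz^n\X\subseteq\gp(\mod R)$. This takes care of the inclusion half of $(\g_n)$ for free.

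The main obstacle is part (b), i.e.\ showing that $\syz^n\X$ is closed under cosyzygies. Given $M=\syz^n N$ with $N\in\X$, the module $M$ is totally reflexive so the required sequence exists with $M'=\syz^{-1}M$; what needs to be proved is that $\syz^{-1}(\syz^n N)$ itself equals $\syz^n N'$ stably for some $N'\in\X$. My approach is to exploit the quasi-deformation $R\to R'\leftarrow S$ realizing the CI-dimension bound on $N$: the module $N\otimes_R R'$ has finite projective dimension over the regular factor $S$, and the cosyzygy operation on $\syz^n N$ can be matched, after base change along $R\to R'$, with a genuine syzygy construction on a finitely generated $S$-module built from $N\otimes_R R'$. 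Descending back to $\mod R$ via the faithfully flat map $R\to R'$, and using that $\X$ is closed under syzygies, extensions and direct summands, one obtains the desired $N'\in\X$. Once $(\g_n)$ is verified, Theorem \ref{main3} supplies the Gorenstein bound and the triangle equivalence in both (1) and (2) simultaneously.
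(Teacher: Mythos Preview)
Your overall strategy matches the paper's: reduce (2) to (1) via the Avramov--Gasharov--Peeva bound $\cidim_R M\le d$ over a $d$-dimensional complete intersection, and for (1) verify $(\g_n)$ so that Theorem~\ref{main3} applies. The use of $\gdim_R N\le\cidim_R N$ to obtain $\syz^n\X\subseteq\gp(\mod R)$ and $\Ext_R^{>n}(\X,R)=0$ is also correct.

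The gap is in your cosyzygy argument. The sentence ``Descending back to $\mod R$ via the faithfully flat map $R\to R'$\ldots one obtains the desired $N'\in\X$'' is not a proof: faithful flatness does not provide a mechanism to descend modules from $R'$ to $R$, and even if some $R$-module were produced, nothing in your sketch forces it to lie in $\X$. Moreover, different $N\in\X$ may require different quasi-deformations, so there is no single $R'$ on which to run the construction uniformly. Your intuition that the quasi-deformation is the relevant structure is sound---it does underlie the key fact---but that fact is a nontrivial theorem, not a routine descent.

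The paper fills this step by invoking \cite[Theorem~4.15]{radius}, which gives $\syz^{-1}M\in\res M$ for any module $M$ with $\cidim_R M=0$. Every $M\in\syz^n\X$ has $\cidim_R M=0$ by \cite[Lemma~(1.9)]{AGP}, so $\syz^{-1}(\syz^n\X)\subseteq\res\syz^n\X\subseteq\X$, the last inclusion because $\X$ is resolving. One then iterates: each cosyzygy is again totally reflexive and lies in $\X$, hence has $\cidim=\gdim=0$, and the same inclusion applies again. This yields $\syz^{-i}(\syz^n\X)\subseteq\X$ for all $i\ge0$, verifying condition~(2) of Proposition~\ref{3.5} rather than condition~(4), but the substantive content---that cosyzygies of $\syz^n\X$ remain in $\X$---is exactly the result from \cite{radius} that your descent sketch was gesturing toward.
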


\begin{proof}
(1) We have $\gdim_RM\le\cidim_RM\le n$ for all $R$-modules $M$ by \cite[Theorem (1.4)]{AGP}.
According to Remark \ref{3.4}(1)(2), the subcategory $\syz^n\X$ of $\mod R$ is contained in $\gp(\mod R)$.
Since every module in $\syz^n\X$ has CI-dimension at most $0$ by \cite[Lemma (1.9)]{AGP}, it follows from \cite[Theorem 4.15]{radius} that
$$
\syz^{-1}(\syz^n\X)\subseteq\res\syz^n\X\subseteq\X
$$
holds.
In particular, every module $M\in\syz^{-1}\syz^n\X$ has CI-dimension at most $n$, and $\cidim_RM=\gdim_RM\le0$ by \cite[Theorem (1.4)]{AGP} and the fact that $\gp(\mod R)$ is closed under cosyzygies.
Using \cite[Theorem 4.15]{radius} again, we obtain
$$
\syz^{-2}(\syz^n\X)=\syz^{-1}(\syz^{-1}\syz^n\X)\subseteq\res\syz^{-1}\syz^n\X\subseteq\X.
$$
Iterating this procedure shows that the second condition in Proposition \ref{3.5} is satisfied for $\A=\mod R$, and hence $\X$ satisfies $(\g_n)$.
Theorem \ref{main3} complete the proof of the corollary.

(2) Taking advantage of \cite[Theorems (1.3) and (1.4)]{AGP}, one observes that every $R$-module has CI-dimension at most $d$.
The assertion thus follows from (1).
\end{proof}

\section{Vanishing of Ext of modules over stable resolving subcategories}\label{sect4}

In this section we consider when the converse of Proposition \ref{qf} holds, in other words, when $\ce$ implies $\cd$.
We focus on the category of finitely generated modules over a commutative noetherian local ring which is Henselian (e.g. complete).
Throughout this section, let $R$ be a Henselian local ring with maximal ideal $\m$ and residue field $k$.

We say that two $R$-modules $M$ and $N$ are {\em stably isomorphic} if $M\cong N$ in the stable category $\lmod R:=\underline{\mod R}$ of $\mod R$, or equivalently, if $M\oplus P\cong N\oplus Q$ in $\mod R$ for some free $R$-modules $P,Q$.
Also, for an $R$-module $X$ we denote the {\em (Auslander) transpose} of $X$ by $\tr X$.
This is defined to be the cokernel of the map $\Hom_R(\phi,R)$, where $\phi$ appears in an $R$-free presentation $F_1\xrightarrow{\phi}F_0\to X\to0$ of $X$.
This is uniquely determined up to free summands, so uniquely determined up to isomorphism in the stable category $\lmod R$.

Let $\SS_n$ (respectively, $\T_n$) be the subcategory of $\mod R$ consisting of modules $M$ with $\Ext_R^i(M,R)=0$ (respectively, $\Ext_R^i(\tr M,R)=0$) for all $1\le i\le n$.
A module in $\T_n$ is called an {\em $n$-torsionfree} module.
There is an intimate relationship between these categories; see \cite[Proposition 1.1.1 and Corollary 1.1.2]{I}.

\begin{lem}\label{iyam}
For all $n,m\ge0$ there is a diagram of functors
$$
\xymatrix@R-1pc@C-1pc{
\underline{\SS_{n+1}\cap\T_m} \ar@<.5mm>[rr]^{\syz^{\phantom{-1}}} \ar@<.5mm>[dd]^{\tr} & & \underline{\SS_n\cap\T_{m+1}} \ar@<.5mm>[ll]^{\syz^{-1}} \ar@<.5mm>[dd]^{\tr}\\
\\
\underline{\SS_m\cap\T_{n+1}} \ar@<.5mm>[rr]^{\syz^{-1}} \ar@<.5mm>[uu]^{\tr} & & \underline{\SS_{m+1}\cap\T_n} \ar@<.5mm>[ll]^{\syz^{\phantom{-1}}} \ar@<.5mm>[uu]^{\tr}
}
$$
which commutes up to isomorphism, whose horizontal arrows are equivalences and whose vertical arrows are dualities.
\end{lem}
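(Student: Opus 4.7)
The plan is to deduce the lemma from three classical facts about the Auslander transpose over $R$: first, $\tr$ defines a duality on $\lmod R$ with $\tr^2\cong\mathrm{id}$; second, directly from the definition of $\T_j$, we have $M\in\T_j$ if and only if $\tr M\in\SS_j$, and applying this to $\tr M$ gives the symmetric statement $M\in\SS_j$ if and only if $\tr M\in\T_j$; third, for any $M\in\SS_1$, dualising a projective presentation $F_2\to F_1\to F_0\to M\to 0$ yields an exact sequence
$$
0\to\tr M\to\Hom_R(F_2,R)\to\tr\syz M\to 0.
$$
The first two facts immediately imply that $\tr$ restricts to mutually quasi-inverse dualities $\underline{\SS_{n+1}\cap\T_m}\leftrightarrow\underline{\T_{n+1}\cap\SS_m}=\underline{\SS_m\cap\T_{n+1}}$, supplying the vertical arrows of the diagram.

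For the horizontal arrows I would first check that $\syz$ sends $\SS_{n+1}\cap\T_m$ into $\SS_n\cap\T_{m+1}$. The inclusion into $\SS_n$ is the standard dimension shift: the sequence $0\to\syz M\to F_0\to M\to 0$ produces $\Ext_R^i(\syz M,R)\cong\Ext_R^{i+1}(M,R)$ for $i\ge1$. For the $\T_{m+1}$-part, apply $\Hom_R(-,R)$ to the displayed sequence above (whose exactness requires only $M\in\SS_{n+1}\subseteq\SS_1$). This yields $\Ext_R^i(\tr\syz M,R)\cong\Ext_R^{i-1}(\tr M,R)$ for $i\ge 2$, and identifying $\Hom_R(\tr M,R)\cong\syz^2 M$ by dualising the same presentation, together with the observation that the induced map $F_2\to\syz^2 M$ is the canonical surjection, forces $\Ext_R^1(\tr\syz M,R)=0$ as well. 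Since $M\in\T_m$ is equivalent to $\tr M\in\SS_m$, this gives $\tr\syz M\in\SS_{m+1}$, i.e., $\syz M\in\T_{m+1}$.

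For the quasi-inverse, the same displayed sequence is precisely a cosyzygy sequence of $\tr M$: because $\tr M\in\T_1$ (a consequence of $M\in\SS_1$), the embedding $\tr M\hookrightarrow\Hom_R(F_2,R)$ is a left $(\proj R)$-approximation, and $\tr\syz M\cong\syz^{-1}\tr M$ in $\lmod R$. Setting $\syz^{-1}:=\tr\syz\tr$ therefore makes the lower horizontal arrows well-defined and yields the natural isomorphism $\tr\syz\cong\syz^{-1}\tr$ which provides the commutativity of all four squares. Quasi-invertibility of $\syz$ and $\syz^{-1}$ between the relevant subcategories then follows from the standard Schanuel identities $\syz^{-1}\syz\cong\mathrm{id}$ on $\SS_1$ and $\syz\syz^{-1}\cong\mathrm{id}$ on $\T_1$, both of which are ensured by the hypothesis $n,m\ge 0$. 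The only genuinely delicate point in this plan is the vanishing $\Ext_R^1(\tr\syz M,R)=0$ above; everything else reduces to bookkeeping with dimension shifting and the defining vanishing conditions of $\SS_j$ and $\T_j$.
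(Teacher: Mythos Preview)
The paper does not give its own proof of this lemma; it simply records the statement and refers the reader to \cite[Proposition 1.1.1 and Corollary 1.1.2]{I}. Your proposal therefore supplies strictly more than the paper does, and the strategy --- using the duality $\tr$ together with the short exact sequence $0\to\tr M\to F_2^*\to\tr\syz M\to 0$ obtained from a three-term free resolution of $M\in\SS_1$ --- is exactly the classical Auslander--Bridger argument underlying Iyama's result. Your bookkeeping is correct: $\tr$ swaps $\SS_j$ and $\T_j$, dimension shifting handles the $\SS$-index, and the displayed sequence handles the $\T$-index; defining $\syz^{-1}:=\tr\syz\tr$ makes the commutativity tautological and the Schanuel-type identities give quasi-invertibility.

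One remark: the step you single out as ``genuinely delicate'', namely $\Ext_R^1(\tr\syz M,R)=0$, is in fact the easiest. By the Auslander--Bridger sequence
\[
0\to\Ext_R^1(\tr N,R)\to N\to N^{**}
\]
the vanishing $\Ext_R^1(\tr N,R)=0$ is equivalent to $N$ being torsionless. Since $\syz M$ is a submodule of the free module $F_0$, it is automatically torsionless, so $\syz M\in\T_1$ without any hypothesis on $M$. This replaces your argument via identifying $(\tr M)^*\cong\syz^2M$ and tracking the map $F_2\to\syz^2M$, which is correct but unnecessarily indirect.
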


The following is the main result of this section, which asserts that $\ce$ implies $\cd$ in a certain setting.

\begin{thm}\label{main4}
Let $n\ge0$ be an integer, and let $\X$ be a resolving subcategory of $\mod R$ satisfying the following three conditions.
\begin{enumerate}[\rm(1)]
\item
The $R$-modules of projective dimension less than $n$ are in $\X$.
\item
For each $R$-sequence $\xx=x_1,\dots,x_n$ the module $R/\xx R$ belongs to $\X$.
\item
For all $X\in\X$ the module $\syz^{n+1}X$ is $(n+1)$-torsionfree.
\end{enumerate}
If $\Ext_{\mod\underline\X}^{3n+1}(\mod\underline\X,\proj(\mod\underline\X))=0$, then $\Ext_R^{>n}(\X,R)=0$.
\end{thm}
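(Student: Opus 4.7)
Fix $X\in\X$. Since $\X$ is resolving (hence closed under syzygies) and $\Ext_R^{n+j}(X,R)\cong\Ext_R^{n+1}(\syz^{j-1}X,R)$ for $j\ge1$, it is enough to prove $\Ext_R^{n+1}(X,R)=0$; the general vanishing $\Ext_R^{>n}(X,R)=0$ will then follow by dimension shifting. Furthermore, writing $\Ext_R^{n+1}(X,R)=\coker\bigl(P_n^*\to(\syz^{n+1}X)^*\bigr)\cong\lhom_R(\syz^{n+1}X,R)$, what one really needs to show is that every stable map $\syz^{n+1}X\to R$ vanishes.

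The plan is to transport the hypothesis from $\mod\underline\X$ to $\mod R$ via Proposition~\ref{3.3}(2) and the Yoneda formula $\Hom_{\mod\underline\X}(\lhom_R(-,M)|_{\underline\X},\lhom_R(-,N)|_{\underline\X})=\lhom_R(M,N)$. First I would fix a short exact sequence $0\to A\to B\to X\to 0$ in $\mod R$ with $A,B\in\X$, available from Remark~\ref{lmpn}(2). This gives a finitely presented functor $F:=\Cok(\lhom_R(-,B)|_{\underline\X}\to\lhom_R(-,X)|_{\underline\X})$ whose projective resolution (Proposition~\ref{3.3}(2)) carries $\lhom_R(-,\syz^nB)|_{\underline\X}$ at position $3n+1$, with differentials coming from the iterated syzygy sequence of Proposition~\ref{longex}(1). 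Taking $Y\in\X$ arbitrary and applying Yoneda, the group $\Ext_{\mod\underline\X}^{3n+1}(F,\lhom_R(-,Y)|_{\underline\X})$ is the middle cohomology of
\[
\lhom_R(\syz^nX,Y)\to\lhom_R(\syz^nB,Y)\to\lhom_R(\syz^nA,Y),
\]
which by the diagram-chase of Lemma~\ref{half}(2) measures the obstruction to extending stable maps across $0\to\syz^nA\to\syz^nB\oplus P\to\syz^nX\to0$ — an obstruction controlled by $\Ext_R^1(\syz^nX,\proj R)\cong\Ext_R^{n+1}(X,R)$.

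The hypothesis forces this obstruction to vanish. To actually extract $\Ext_R^{n+1}(X,R)=0$ from this, I would specialise $Y=R/\xx R$ for an $R$-regular sequence $\xx=x_1,\dots,x_n$; by condition~(2) we have $Y\in\X$, and by condition~(1) the intermediate modules on the Koszul complex of $\xx$ (all of projective dimension $<n$) remain in $\X$. The Koszul resolution of $R/\xx R$ has $R$ as its top syzygy, so combining the functorial vanishing with the Koszul exact sequences propagates the lifting property from $R/\xx R$ back to $R$. Condition~(3), that $\syz^{n+1}X$ is $(n+1)$-torsionfree, then enters through Lemma~\ref{iyam}: under the duality $\tr\colon\underline{\SS_{n+1}\cap\T_m}\cong\underline{\SS_m\cap\T_{n+1}}$, a map $\syz^{n+1}X\to R$ corresponds to data on the transpose side that the functorial vanishing forces to be trivial. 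Thus every map $\syz^{n+1}X\to R$ factors through a free $R$-module, giving $\lhom_R(\syz^{n+1}X,R)=0$ and hence $\Ext_R^{n+1}(X,R)=0$.

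The main obstacle will be the bookkeeping in that final step: cleanly combining the Koszul data of $\xx$ (from conditions~(1) and~(2)) with the $(n+1)$-torsionfreeness of $\syz^{n+1}X$ and Iyama's duality, in order to separate $\Ext$ into $R$ from $\Ext$ into $R/\xx R$. The preliminary setup — identifying $\Ext_{\mod\underline\X}^{3n+1}$ via Yoneda as the middle cohomology of a stable-$\Hom$ complex built from the $n$-fold iterated syzygy of a chosen short exact sequence — is essentially a direct application of Propositions~\ref{3.3}, \ref{longex} and Lemma~\ref{half}.
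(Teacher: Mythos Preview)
Your proposal has a genuine gap at the core step. The identification $\Ext_R^{n+1}(X,R)\cong\lhom_R(\syz^{n+1}X,R)$ is simply false: every morphism into a projective module factors through a projective, so $\lhom_R(M,R)=0$ for \emph{every} $M$. What is true is $\Ext_R^{n+1}(X,R)\cong\Cok\bigl(P_n^*\to(\syz^{n+1}X)^*\bigr)$, but this is not a stable hom-set.

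More seriously, you are running Lemma~\ref{half}(2) in the wrong direction. That lemma says that if $\Ext_R^1(\syz^nX,\proj\A)=0$ then the stable-hom complex $\lhom_R(\syz^nX,Y)\to\lhom_R(\syz^nB,Y)\to\lhom_R(\syz^nA,Y)$ is exact; this is exactly how Proposition~\ref{qf} proves $\cd\Rightarrow\ce$. You want the converse: from exactness of this complex for all $Y\in\X$, deduce $\Ext_R^{n+1}(X,R)=0$. There is no such implication in general, and your final paragraph (``combining the Koszul data with Iyama's duality'') does not supply one. Specialising $Y=R/\xx R$ gives information about stable maps into $R/\xx R$, not about $\Ext$ into $R$; the Koszul resolution does not let you ``propagate back to $R$'' in the way you suggest, because the relevant obstruction lives in a different place.

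The paper's proof is structurally very different and substantially harder. It proceeds through four lemmas (Lemmas~\ref{ess1}--\ref{ess4}): first it uses the $\Ext^{3n+1}$-vanishing to show that for certain nonsplit extensions $0\to Y\to E\to X\to 0$ with $\pd Y\le n$ and $X$ indecomposable in $\SS_n$, one has $\syz^nE\cong\syz^nX$ stably (this is where the Henselian/Krull--Schmidt hypothesis on $R$ is essential). From this it deduces that if $X\in\X\cap\SS_n$ has $\pd X=n+1$ then $\tr\syz^nX$ is stably a $k$-vector space, and then rules out such $X$ by a trick with the ideal $(x_1^2,x_2,\dots,x_e)$. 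Only then does the Koszul-complex argument with $R/\xx R$ enter (Lemma~\ref{ess4}), and condition~(3) is used at the very end to reduce a general $X\in\X$ to one in $\X\cap\SS_n$ via the approximation sequence $0\to Y\to\tr\syz^n\tr\syz^nX\to X\to0$. None of these ingredients --- the indecomposability reduction, the transpose computation, the surjective-endomorphism argument, or the reduction to $\SS_n$ --- appears in your outline.
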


This theorem is proved in five steps.
The first four steps are done by the following four lemmas, respectively.

\begin{lem}\label{ess1}
Let $n$ be a nonnegative integer.
Let $\X$ be a quasi-resolving subcategory of $\mod R$ with $\Ext_{\mod\underline\X}^{3n+1}(\mod\underline\X,\proj(\mod\underline\X))=0$.
Let $0 \to Y \to E \to X \to 0$ be an exact sequence of modules in $\X$ with $\pd_RY\le n$ and $\pd_RE>n$ such that $X$ is an indecomposable $R$-module belonging to $\SS_n$.
Then $\syz^nE$ is stably isomorphic to $\syz^nX$.
\end{lem}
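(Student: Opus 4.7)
The plan is to use the hypothesis to exhibit a stable splitting of the iterated syzygy morphism $g_n := \syz^n g : \syz^n E \to \syz^n X$ induced by $g : E \twoheadrightarrow X$, and then to eliminate the resulting complementary stable summand by appealing to the indecomposability of $\syz^n X$ in $\lmod R$.

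First I would set $F := \Cok\bigl(\lhom_R(-,E)|_{\underline\X} \to \lhom_R(-,X)|_{\underline\X}\bigr) \in \mod\underline\X$ and apply Proposition \ref{3.3}(2) to the given sequence $0 \to Y \to E \to X \to 0$ to obtain an explicit projective resolution of $F$ in $\mod\underline\X$ whose $(3k+2)$-nd term is $\lhom_R(-,\syz^k Y)|_{\underline\X}$ for each $k\ge0$. Since $\pd_R Y \le n$ makes $\syz^n Y$ projective, this term vanishes at $k=n$, and exactness at position $3n+1$ forces the differential
$$d_{3n+1} : \lhom_R(-,\syz^n E)|_{\underline\X} \to \lhom_R(-,\syz^n X)|_{\underline\X}$$
to be a monomorphism; truncating yields a finite projective resolution of $F$ of length $3n+1$ in which $d_{3n+1}$ corresponds under Yoneda to $g_n$.

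Next, I would compute $\Ext^{3n+1}_{\mod\underline\X}(F,Q)$ from this resolution as the cokernel of the precomposition map
$$d_{3n+1}^\ast : \Hom\bigl(\lhom_R(-,\syz^n X)|_{\underline\X},Q\bigr) \to \Hom\bigl(\lhom_R(-,\syz^n E)|_{\underline\X},Q\bigr).$$
The hypothesis forces this cokernel to vanish for every projective $Q$, in particular for $Q := \lhom_R(-,\syz^n E)|_{\underline\X}$ (which is projective because $\syz^n E \in \X$ by quasi-resolving closure under syzygies). Applying the resulting surjection to $\id_Q$ and transporting back via Yoneda produces $h \in \lhom_R(\syz^n X, \syz^n E)$ with $hg_n = 1_{\syz^n E}$ stably; that is, $g_n$ is a stable split monomorphism, and $\syz^n X \cong \syz^n E \oplus K$ in $\lmod R$ for some $K$.

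Finally, to eliminate $K$: $X$ is non-projective (else the sequence would split, yielding $\pd_R E \le \max(\pd_R X, \pd_R Y) \le n$, contradicting $\pd_R E > n$) and $\syz^n E$ is nonzero in $\lmod R$ (again since $\pd_R E > n$). Iterating Lemma \ref{iyam} along the chain
$$\underline{\SS_n} = \underline{\SS_n \cap \T_0} \xrightarrow{\syz} \underline{\SS_{n-1} \cap \T_1} \xrightarrow{\syz} \cdots \xrightarrow{\syz} \underline{\SS_0 \cap \T_n} = \underline{\T_n}$$
of equivalences exhibits $\syz^n : \underline{\SS_n} \to \underline{\T_n}$ as an equivalence, which preserves indecomposability. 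The Henselian hypothesis makes $\lend_R(X)$ local, so $X$ is indecomposable in $\underline{\SS_n}$ and $\syz^n X$ is indecomposable in $\lmod R$; combined with $\syz^n E \ne 0$ stably, the decomposition forces $K = 0$. The main obstacle will be the bookkeeping in the middle of the argument: correctly identifying $d_{3n+1}$ with $g_n$ under Yoneda and verifying that $\pd_R Y \le n$ cleanly truncates the resolution of $F$ at position $3n+2$.
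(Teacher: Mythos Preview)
Your proposal is correct and follows essentially the same approach as the paper's proof: both build the projective resolution of $F$ from the given short exact sequence via Proposition~\ref{longex}(2)/Proposition~\ref{3.3}(2), use $\pd_R Y\le n$ to make $\lhom_R(-,\syz^nY)|_{\underline\X}$ vanish so that $d_{3n+1}$ is a monomorphism, apply the $\Ext^{3n+1}$-vanishing hypothesis with $Q=\lhom_R(-,\syz^nE)|_{\underline\X}$ to obtain a stable retraction of $\syz^n g$, and then invoke Lemma~\ref{iyam} together with the indecomposability of $X$ and the Krull--Schmidt property of $\lmod R$ to kill the complementary summand. The only cosmetic difference is in the last step: the paper transports the decomposition $\syz^nX\cong\syz^nE\oplus C$ back to $\underline{\SS_n}$ via $\syz^{-n}$ and applies Krull--Schmidt to $X$ there, whereas you push the indecomposability of $X$ forward along the equivalence $\syz^n:\underline{\SS_n}\to\underline{\T_n}$ to conclude $\syz^nX$ is indecomposable directly; these are equivalent uses of the same equivalence.
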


\begin{proof}
Taking $n$th syzygies, we have an exact sequence $0 \to \syz^nY \to \syz^nE\oplus P \xrightarrow{(f,r)} \syz^nX \to 0$, where $P$ is a free $R$-module.
(This follows from the horseshoe and snake lemmas.)
Since $Y$ has projective dimension at most $n$, the $R$-module $\syz^nY$ is free.
Proposition \ref{longex}(2) gives rise to an exact sequence
\begin{align*}
0=\lhom_R(-,\syz^nY)|_{\underline\X}\to\lhom_R(-,\syz^nE)|_{\underline\X}&\xrightarrow{\alpha}\lhom_R(-,\syz^nX)|_{\underline\X}\to\lhom_R(-,\syz^{n-1}Y)|_{\underline\X}\\
&\to\cdots\to\lhom_R(-,E)|_{\underline\X}\to\lhom_R(-,X)|_{\underline\X}\to F\to0.
\end{align*}
Then $F$ belongs to $\mod\underline\X$.
As $\syz^nE$ is in $\X$, the assumption implies $\Ext_{\mod\underline\X}^{3n+1}(F,\lhom_R(-,\syz^nE)|_{\underline\X})=0$.
Therefore the map
$$
\Hom_{\mod\underline\X}(\lhom_R(-,\syz^nX)|_{\underline\X},\lhom_R(-,\syz^nE)|_{\underline\X})\to\Hom_{\mod\underline\X}(\lhom_R(-,\syz^nE)|_{\underline\X},\lhom_R(-,\syz^nE)|_{\underline\X})
$$
induced by $\alpha$ is surjective, which shows that the map $\lhom_R(f,\syz^nE):\lhom_R(\syz^nX,\syz^nE)\to\lhom_R(\syz^nE,\syz^nE)$ is also surjective (see Remark \ref{yoneda}(1)).
Hence there exists a homomorphism $g:\syz^nX\to\syz^nE$ of $R$-modules such that $\underline{1}=\underline{gf}$ in $\lhom_R(\syz^nE,\syz^nE)$.
Therefore $\syz^nE$ is stably isomorphic to a direct summand of $\syz^nX$, and there is an isomorphism $\syz^nX\cong\syz^nE\oplus C$ in $\underline{\T_n}$.
Sending this isomorphism by the $n$th cosyzygy functor $\syz^{-n}$, we obtain an isomorphism
$$
X\cong\syz^{-n}\syz^nE\oplus\syz^{-n}C
$$
in $\underline{\SS_n}$ by Lemma \ref{iyam}.
In view of the assumption that $E$ has projective dimension more than $n$, we observe that $X$ is a nonzero indecomposable object and $\syz^{-n}\syz^nE$ is a nonzero object in $\lmod R$.
As $\lmod R$ is a Krull-Schmidt category, $X$ is stably isomorphic to $\syz^{-n}\syz^nE$, which yields isomorphisms $\syz^nX\cong\syz^n(\syz^{-n}\syz^nE)\cong\syz^nE$ in $\lmod R$ by Lemma \ref{iyam}.
\end{proof}

\begin{lem}\label{ess2}
Let $n\ge0$ be an integer.
Let $\X$ be a quasi-resolving subcategory of $\mod R$ closed under direct summands such that $\Ext_{\mod\underline\X}^{3n+1}(\mod\underline\X,\proj(\mod\underline\X))=0$.
Let $X$ be an $R$-module in $\X\cap\SS_n$ with $\pd_RX=n+1$.
Then the $R$-module $\tr\syz^nX$ is stably isomorphic to a $k$-vector space.
\end{lem}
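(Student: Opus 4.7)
The plan is as follows. For each $x\in\m$, I will show that multiplication by $x$ annihilates $\tr\syz^nX$ literally; together with finite generation this forces $\tr\syz^nX$ to be a finite-dimensional $k$-vector space, as desired. Since $R$ is Henselian, $\lmod R$ is Krull--Schmidt, and closure of $\X$ under direct summands lets me reduce to $X$ indecomposable with $\pd_RX = n+1$ (summands of strictly smaller projective dimension have free $\syz^n$ and so contribute trivially). A short free presentation $0\to F_1\xrightarrow{\phi}F_0\to\syz^nX\to 0$ (of length one, because $\pd_R\syz^nX=1$) identifies $\tr\syz^nX\cong\Cok(\phi^*)\cong\Ext_R^1(\syz^nX,R)\cong\Ext_R^{n+1}(X,R)$, and the injectivity of $\phi$ combined with the reflexivity of $F_0$ and $F_1$ yields the crucial vanishing $\Hom_R(\tr\syz^nX,R)\cong\Ker(\phi)=0$.

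Now fix $x\in\m$ together with a canonical presentation $0\to\syz X\to F_0\xrightarrow{\pi}X\to 0$ in $\X$, and set
$$E_x := \Ker\!\bigl((x\cdot\mathrm{id}_X,\,-\pi):\,X\oplus F_0\to X\bigr).$$
Since $(x,-\pi)$ is an epimorphism in $\X$, the quasi-resolving property forces $E_x\in\X$, and the first-coordinate projection yields a short exact sequence $0\to\syz X\to E_x\to X\to 0$ in $\X$ whose class in $\Ext_R^1(X,\syz X)$ is $x$ times the class of the canonical sequence. A calculation with the long exact sequence of $\Hom_R(-,R)$, using the natural isomorphism $\Ext_R^n(\syz X,R)\cong\Ext_R^{n+1}(X,R)$ coming from the canonical sequence, gives $\Ext_R^{n+1}(E_x,R)\cong\Ext_R^{n+1}(X,R)/x\cdot\Ext_R^{n+1}(X,R)$, which is nonzero by Nakayama; hence $\pd_RE_x=n+1>n$, and Lemma~\ref{ess1} applies (with $Y=\syz X$, $E=E_x$) to yield $\syz^nE_x\sg\syz^nX$.

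Applying $\syz^n$ to $0\to\syz X\to E_x\to X\to 0$ via horseshoe produces a short exact sequence $0\to\syz^{n+1}X\to\syz^nE_x\oplus P\to\syz^nX\to 0$ with \emph{free} left-hand term $\syz^{n+1}X=\syz^n(\syz X)$. The stable isomorphism $\syz^nE_x\sg\syz^nX$ forces this sequence to split stably, so its class vanishes under the identification $\Ext_R^1(\syz^nX,\syz^{n+1}X)\cong\lhom_R(\syz^nX,\syz^nX)$; tracking the dimension shift through the construction identifies this class with $\underline{x\cdot\mathrm{id}_{\syz^nX}}$, so $x\cdot\mathrm{id}_{\syz^nX}$ factors through a free $R$-module. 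Applying the duality $\tr$ of Lemma~\ref{iyam} (with $\syz^nX\in\underline{\SS_0\cap\T_n}$) transports this to a factorization $\tr\syz^nX\xrightarrow{a}R^k\xrightarrow{b}\tr\syz^nX$, and combining with the vanishing $\Hom_R(\tr\syz^nX,R)=0$ forces $a=0$, so $x\cdot\mathrm{id}_{\tr\syz^nX}=0$ literally; running over $x\in\m$ gives $\m\cdot\tr\syz^nX=0$. The most technically delicate step I expect is the identification of the extension class of the $\syz^n$-shifted sequence with $\underline{x\cdot\mathrm{id}_{\syz^nX}}$ in $\lhom_R(\syz^nX,\syz^nX)$, a naturality argument that tracks the pullback construction through the syzygy functor and must be carried out carefully to match signs.
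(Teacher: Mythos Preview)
Your setup closely follows the paper's: the reduction to indecomposable $X$, the construction of $E_x$ as the pullback extension with class $x[\varepsilon]$, the verification that $\pd_R E_x = n+1$, and the application of Lemma~\ref{ess1} to obtain the stable isomorphism $\syz^n E_x \cong \syz^n X$ are all correct and match the paper's argument (the paper writes $Y$ for your $E_x$ and uses the companion sequence $0 \to E_x \to F_0 \oplus X \to X \to 0$ for the $\Ext$ computation, but this is immaterial).

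The gap is in the sentence ``the stable isomorphism $\syz^n E_x \sg \syz^n X$ forces this sequence to split stably, so its class vanishes.'' Knowing that the source and target of the epimorphism $\syz^n E_x \oplus P \to \syz^n X$ are stably isomorphic does \emph{not} tell you that this particular map is a stable isomorphism, and it certainly does not imply that the short exact sequence (with free left-hand term $\syz^{n+1}X$) splits in $\mod R$. You have given no reason why the specific morphism produced by the horseshoe construction should be invertible, and there is none in general. Moreover, the claimed identification $\Ext_R^1(\syz^n X,\syz^{n+1}X)\cong\lhom_R(\syz^nX,\syz^nX)$ is only an injection (the connecting map of the canonical syzygy sequence, as in Proposition~\ref{longex}), not an isomorphism, since $\Ext_R^1(\syz^nX,Q)\neq 0$ for the free cover $Q$.

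What the paper does instead is invoke the Hopfian property of finitely generated modules over a Noetherian ring. From $0 \to E_x \to F_0\oplus X \xrightarrow{(\pi,x)} X \to 0$ one extracts a surjection $\beta:\Ext_R^{n+1}(X,R)\twoheadrightarrow\Ext_R^{n+1}(E_x,R)$ whose kernel is exactly $x\cdot\Ext_R^{n+1}(X,R)$ (this is equivalent to your computation $\Ext_R^{n+1}(E_x,R)\cong\Ext_R^{n+1}(X,R)/x\Ext_R^{n+1}(X,R)$). The stable isomorphism $\syz^nE_x\cong\syz^nX$ yields an \emph{abstract} isomorphism $\Ext_R^{n+1}(E_x,R)\cong\Ext_R^{n+1}(X,R)$; composing, $\beta$ becomes a surjective endomorphism of the Noetherian module $\Ext_R^{n+1}(X,R)$, hence an isomorphism by \cite[Theorem~2.4]{M}, and therefore $x\Ext_R^{n+1}(X,R)=0$. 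This is precisely the missing ingredient in your argument. Once $\m\Ext_R^{n+1}(X,R)=0$, the conclusion follows directly since $\tr\syz^nX$ is stably isomorphic to $\Ext_R^1(\syz^nX,R)=\Ext_R^{n+1}(X,R)$ (using $\pd_R\syz^nX=1$); your final detour through the duality $\tr$ and the vanishing $\Hom_R(\tr\syz^nX,R)=0$ then becomes unnecessary.
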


\begin{proof}
Let $X=X_1\oplus\cdots\oplus X_a\oplus X_{a+1}\oplus\cdots\oplus X_b$ be a direct sum decomposition of $X$ into indecomposable modules with $\pd_RX_i=n+1$ for $1\le i\le a$ and $\pd_R X_i\le n$ for $a+1\le i\le b$.
Since $\X$ is closed under direct summands, $X_i$ is in $\X\cap\SS_n$ for all $i$, and $\tr\syz^nX\cong\tr\syz^nX_1\oplus\cdots\oplus\tr\syz^nX_a$ in $\lmod R$.
Thus we may assume that $X$ is an indecomposable $R$-module.

Let $\varepsilon:0\to\syz X\to F\xrightarrow{\pi}X\to0$ be an exact sequence with $F$ free.
Fix an element $a\in\m$.
There is a pullback diagram
$$
\begin{CD}
\phantom{a}\varepsilon:\quad @. 0 @>>> \syz X @>>> F @>{\pi}>> X @>>> 0\phantom{,}\\
@. @. @| @AAA @AAaA \\
a\varepsilon:\quad @. 0 @>>> \syz X @>>> Y @>>> X @>>> 0,
\end{CD}
$$
and it is seen that $Y$ is in $\X$.
Using the exact sequence
\begin{equation}\label{yfxx}
0 \to Y \to F\oplus X \xrightarrow{(\pi,a)} X \to 0
\end{equation}
and the fact that $a$ is an element in $\m$, we easily observe that $Y$ has projective dimension $n+1$.
Hence we can apply Lemma \ref{ess1} to the short exact sequence $a\varepsilon$ to get an isomorphism
\begin{equation}\label{snsn}
\syz^nY\cong\syz^nX\text{ in }\lmod R.
\end{equation}
Applying $\Ext_R^*(-,R)$ to \eqref{yfxx} induces an exact sequence
$$
\Ext_R^{n+1}(X,R)\xrightarrow{a}\Ext_R^{n+1}(X,R)\xrightarrow{\beta}\Ext_R^{n+1}(Y,R)\to\Ext_R^{n+2}(X,R).
$$
Since $X$ has projective dimension $n+1$, the module $\Ext_R^{n+2}(X,R)$ vanishes, which implies that the map $\beta$ is surjective.
Using \eqref{snsn}, we have isomorphisms
$$
\Ext_R^{n+1}(X,R)\cong\Ext_R^1(\syz^nX,R)\cong\Ext_R^1(\syz^nY,R)\cong\Ext_R^{n+1}(Y,R).
$$
According to \cite[Theorem 2.4]{M}, the map $\beta$ is an isomorphism, and hence $a\Ext_R^{n+1}(X,R)=0$.
It follows that $\m\Ext_R^{n+1}(X,R)=0$, which means that $\Ext_R^1(\syz^nX,R)=\Ext_R^{n+1}(X,R)$ is a $k$-vector space.
As $\syz^nX$ has projective dimension $1$, there is an exact sequence
$$
0\to P_1\to P_0\to\syz^nX\to0
$$
with $P_1,P_0$ free.
It is seen from this that $\Ext_R^1(\syz^nX,R)$ is stably isomorphic to $\tr\syz^nX$.
Consequently, the $R$-module $\tr\syz^nX$ is stably isomorphic to a $k$-vector space.
\end{proof}

\begin{lem}\label{ess3}
Let $n$ be a nonnegative integer.
Let $\X$ be a resolving subcategory of $\mod R$ satisfying $\Ext_{\mod\underline\X}^{3n+1}(\mod\underline\X,\proj(\mod\underline\X))=0$.
Then $\X\cap\SS_n$ contains no module of projective dimension $n+1$.
\end{lem}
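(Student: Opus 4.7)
The plan is to argue by contradiction. Suppose some $X \in \X \cap \SS_n$ has $\pd_R X = n+1$; since $\mod R$ is Krull--Schmidt (as $R$ is Henselian) and $\X$ is closed under direct summands, we may assume $X$ is indecomposable. First I would verify $\Ext_R^{n+1}(X, R) \neq 0$: combining $X \in \SS_n$ (so $\Ext_R^i(X, R) = 0$ for $1 \le i \le n$) with $\pd_R X = n+1$ (so $\Ext_R^i(X, R) = 0$ for $i > n+1$), total vanishing of $\Ext_R^{\geq 1}(X, R)$ would force $X$ to be totally reflexive, hence projective (as $\pd_R X < \infty$), contradicting $\pd_R X > 0$. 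Using the standard dimension-shift isomorphism $\Ext_R^1(X, \syz X) \cong \Ext_R^{n+1}(X, \syz^{n+1} X) = \Ext_R^{n+1}(X, R^s)$ (where $\syz^{n+1} X \cong R^s$ is free), pick a nonzero class $\xi$ and form the corresponding non-split short exact sequence $0 \to \syz X \to E \to X \to 0$. Then $E \in \X$ (closure under extensions) with $\pd_R E = n+1$. Applying Lemma \ref{ess1} with $Y = \syz X$---whose hypotheses hold since $X$ is indecomposable in $\SS_n$, $\pd_R Y \le n$, $\pd_R E > n$, and the $\Ext_{\mod\underline\X}^{3n+1}$-vanishing is assumed---one concludes $\syz^n E \sg \syz^n X$ in $\lmod R$.

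Next, the horseshoe lemma applied to the short exact sequence produces a second short exact sequence $0 \to R^s \to \syz^n E \to \syz^n X \to 0$ in $\mod R$. Since all three modules have finite projective dimension, comparing ranks (alternating sums of Betti numbers) yields $\mathrm{rank}_R(\syz^n E) - \mathrm{rank}_R(\syz^n X) = s$. Combined with $\syz^n E \sg \syz^n X$ and Krull--Schmidt, this forces the abstract $R$-module isomorphism $\syz^n E \cong \syz^n X \oplus R^s$.

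Finally, I would extract the contradiction via a general principle: for any short exact sequence $0 \to A \to M \to B \to 0$, the isomorphism class of $M$ as an $R$-module corresponds bijectively to the orbit of its extension class under the natural $\mathrm{Aut}_R(A) \times \mathrm{Aut}_R(B)$-action on $\Ext_R^1(B, A)$; since this action is $R$-linear (in particular fixing $0$), the orbit of $0$ is $\{0\}$, so $M \cong A \oplus B$ if and only if the extension class is zero. Applied with $A = R^s$ and $B = \syz^n X$, the isomorphism $\syz^n E \cong \syz^n X \oplus R^s$ forces the class $\xi' \in \Ext_R^1(\syz^n X, R^s)$ of the syzygy sequence to be zero. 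But $\xi'$ is precisely the image of $\xi \neq 0$ under the dimension-shift isomorphism $\Ext_R^1(X, \syz X) \cong \Ext_R^1(\syz^n X, R^s)$ realized via the horseshoe-to-syzygy correspondence, a contradiction. The main obstacle is this final step: rigorously justifying both the orbit-to-iso-class bijection and the identification of $\xi'$ with the image of $\xi$ through the shift.
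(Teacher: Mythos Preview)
Your approach is genuinely different from the paper's---you try to reach a contradiction directly via a single application of Lemma~\ref{ess1} and a Miyata-type splitting criterion, whereas the paper routes through Lemma~\ref{ess2} (showing $\tr\syz^nX$ is stably a $k$-vector space) and then derives a contradiction from a clever $R/I$ construction with $\m/I\cong k$. Your route is more elementary in spirit, but as written it has two real gaps.

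\textbf{The claim $\pd_R E=n+1$ is unjustified and in fact false for the obvious choice of $\xi$.} You assert that any nonzero $\xi\in\Ext_R^1(X,\syz X)$ yields an extension $0\to\syz X\to E\to X\to0$ with $\pd_R E=n+1$. But the canonical syzygy class (the class of $0\to\syz X\to P_0\to X\to0$) is nonzero, yet here $E=P_0$ is free. Since Lemma~\ref{ess1} requires $\pd_R E>n$, your argument cannot proceed without first producing a specific $\xi$ with this property. (Such $\xi$ do exist---taking $\xi=a\varepsilon$ for $a\in\m$ as in the proof of Lemma~\ref{ess2} works---but you have not done this, and doing so would essentially reprove the heart of that lemma.) Relatedly, the ``dimension-shift isomorphism'' $\Ext_R^1(X,\syz X)\cong\Ext_R^{n+1}(X,R^s)$ you invoke is only an injection: the final connecting map uses $0\to\syz^{n+1}X\to Q_n\to\syz^nX\to0$ and its surjectivity would require $\Ext_R^{n+1}(X,R)=0$, which is precisely what fails.

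\textbf{The ``orbit-to-iso-class bijection'' is not a valid general principle.} The map from $\mathrm{Aut}(A)\times\mathrm{Aut}(B)$-orbits in $\Ext^1(B,A)$ to isomorphism classes of middle terms is well-defined, but it is not injective in general: distinct orbits can yield isomorphic middle terms. Your observation that the orbit of $0$ is $\{0\}$ only shows that split extensions form a single orbit; it does not rule out a nonzero class whose middle term happens to be isomorphic to $A\oplus B$. What you actually need is Miyata's theorem: over a commutative noetherian ring, if $0\to A\to M\to B\to0$ is exact with $M\cong A\oplus B$, then the sequence splits. This is a nontrivial result (the paper itself invokes it via \cite[Theorem~2.4]{M} in the guise of injective endomorphisms of noetherian modules being surjective). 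With Miyata in hand, together with the $\chi$-additivity argument and Krull--Schmidt, your step from $\syz^nE\sg\syz^nX$ to splitting of the horseshoe sequence would go through---once the previous gap is closed and the identification of $\xi'$ with the dimension-shifted image of $\xi$ is verified.
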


\begin{proof}
Suppose that the subcategory $\X\cap\SS_n$ contains a module $X$ of projective dimension $n+1$.
Then it follows from Lemma \ref{ess2} that $\tr\syz^nX$ is stably isomorphic to $k^{\oplus t}$ for some $t\ge0$.
As the projective dimension of $X$ exceeds $n$, the integer $t$ has to be positive.
There are isomorphisms
$$
X\cong\tr\syz^n\tr\syz^nX\cong\tr\syz^nk^{\oplus t}
$$
in $\lmod R$, where the first isomorphism follows from Lemma \ref{iyam}.
Since $t$ is positive, $X$ is in $\X\cap\SS_n$ and $\X$ is closed under direct summands, the module $\tr\syz^nk$ is in $\X\cap\SS_n$.

Let $e$ be the embedding dimension of $R$; note $e\ge1$.
Write $\m=(x_1,x_2,\dots,x_e)$, and take the subideal $I=(x_1^2,x_2,\dots,x_e)$.
Then $\m/I$ is isomorphic to $k$, which means that there is an exact sequence
$$
0 \to k \to R/I \to k \to 0.
$$
As $\depth R\ge\pd_RX=n+1$, we have $\Ext_R^n(k,R)=0$.
By \cite[Lemma 2.3]{crspd}, applying $\tr\syz^n$ induces an exact sequence
$$
0 \to \tr\syz^nk \to \tr\syz^n(R/I) \to \tr\syz^nk \to 0.
$$
Since the subcategory $\X\cap\SS_n$ of $\mod R$ is closed under extensions, $\tr\syz^n(R/I)$ belongs to $\X\cap\SS_n$.
It follows from \cite[Lemma 2.5]{crspd} that the $R$-module $\tr\syz^n(R/I)$ has projective dimension $n+1$.
Thus one can apply Lemma \ref{ess2} to this module to see that $\tr\syz^n(\tr\syz^n(R/I))$ is stably isomorphic to $k^{\oplus r}$ for some $r\ge0$.
As $R/I$ has finite length and $\depth R\ge n+1$, we see that $R/I$ is in $\SS_n$.
Lemma \ref{iyam} implies that $\tr\syz^n(\tr\syz^n(R/I))$ is stably isomorphic to $R/I$.
Hence $R/I$ is stably isomorphic to $k^{\oplus r}$.
Using the Krull-Schmidt theorem, we observe that $R/I$ is isomorphic to $k$ as an $R$-module.
Taking the annihilators yields $I=\m$, which is a contradiction.
We conclude that $\X\cap\SS_n$ does not contain a module of projective dimension $n+1$.
\end{proof}

\begin{lem}\label{ess4}
Let $n$ be a nonnegative integer.
Let $\X$ be a resolving subcategory of $\mod R$ such that $\Ext_{\mod\underline\X}^{3n+1}(\mod\underline\X,\proj(\mod\underline\X))=0$.
If $R/\xx R$ belongs to $\X$ for all $R$-sequences $\xx=x_1,\dots,x_n$ and $\syz^{n+1}(\X\cap\SS_n)\subseteq\T_{n+1}$, then one has $\Ext_R^{n+1}(\X\cap\SS_n,R)=0$.
\end{lem}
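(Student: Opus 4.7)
The plan is to argue by contradiction using Lemma~\ref{ess3}, which forbids modules in $\X\cap\SS_n$ of projective dimension exactly $n+1$. Assume some indecomposable $M\in\X\cap\SS_n$ (reduction allowed since $R$ is Henselian and $\lmod R$ is Krull--Schmidt) satisfies $\Ext_R^{n+1}(M,R)\ne0$. The strategy is to adapt the pullback argument of Lemma~\ref{ess2}, with the free cover of $X$ there replaced by $R/\xx R$ for an $R$-sequence $\xx=x_1,\dots,x_n$; this module lies in $\X$ by hypothesis and has projective dimension exactly $n$, the correct threshold for Lemma~\ref{ess1}.

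First, iterating the long exact sequence of $\Ext_R^*(M,-)$ along the short exact sequences cut from the Koszul resolution of $R/\xx R$---and using $M\in\SS_n$ to kill the intermediate obstructions---yields
\[
\Ext_R^1(M,R/\xx R)\cong\Ext_R^{n+1}(M,R)\ne 0.
\]
Hence there is a non-split extension $0\to R/\xx R\to E\to M\to 0$ with $E\in\X$ by extension-closure and $\pd_RE>n$. For each $a\in\m$, form the pullback $E_a=E\times_M M$ along multiplication by $a$; this fits in short exact sequences $0\to R/\xx R\to E_a\to M\to 0$ and $0\to E_a\to E\oplus M\xrightarrow{(\pi,-a)}M\to 0$. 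Lemma~\ref{ess1} applied to the first gives $\syz^nE_a\cong\syz^nM$ stably, hence $\Ext_R^{n+1}(E_a,R)\cong\Ext_R^{n+1}(M,R)$ as $R$-modules. Using this isomorphism together with the long exact sequence arising from the second displayed sequence (and the observation that $\pi^{*}\colon\Ext_R^{n+1}(M,R)\to\Ext_R^{n+1}(E,R)$ becomes the identity under the natural identification, because the Horseshoe sequence $0\to R\to\syz^nE\to\syz^nM\to 0$ splits stably), an argument parallel to Lemma~\ref{ess2} should yield $a\cdot\Ext_R^{n+1}(M,R)=0$ for every $a\in\m$, so that $\Ext_R^{n+1}(M,R)$ becomes a finite-dimensional $k$-vector space.

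Once $\Ext_R^{n+1}(M,R)$ is known to be a $k$-vector space, the proof of Lemma~\ref{ess3} carries over verbatim: $M$ is stably isomorphic to $\tr\syz^n k$, and applying $\tr\syz^n$ to the short exact sequence $0\to k\to R/I\to k\to 0$ with $I=(x_1^2,x_2,\dots,x_e)$ gives $\tr\syz^n(R/I)\in\X\cap\SS_n$ of projective dimension exactly $n+1$, contradicting Lemma~\ref{ess3}.

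The principal obstacle is that, unlike in Lemma~\ref{ess2} where the middle term $F\oplus X$ has $F$ free and the Ext calculation collapses to pure multiplication by $a$, here the middle term is $E\oplus M$ and the calculation carries an extra $\pi^{*}$ component. Isolating the multiplication-by-$a$ behaviour through the long exact sequence requires the vanishing $\Ext_R^{n+2}(M,R)=0$, which was automatic in Lemma~\ref{ess2} from $\pd_RM=n+1$ but is not automatic here. This is precisely where the hypothesis $\syz^{n+1}(\X\cap\SS_n)\subseteq\T_{n+1}$ must be brought to bear: combined with the duality $\tr\colon\underline{\T_{n+1}}\cong\underline{\SS_{n+1}}$ of Lemma~\ref{iyam} and the $4$-term Auslander exact sequence associated to $\tr\syz^{n+1}M$, together with the quasi-Frobenius condition $\Ext_{\mod\underline\X}^{3n+1}(\mod\underline\X,\proj(\mod\underline\X))=0$, it should provide the missing control on $\Ext_R^{1}(\syz^{n+1}M,R)=\Ext_R^{n+2}(M,R)$ needed to close the argument.
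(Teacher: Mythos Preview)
Your proposal assembles the right ingredients (the extension by $R/\xx R$, Lemma~\ref{ess1}, the hypothesis $\syz^{n+1}(\X\cap\SS_n)\subseteq\T_{n+1}$), but the overall architecture does not close, for two reasons.

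First, the role of the $\T_{n+1}$ hypothesis is misidentified. You propose to use it at the end to control $\Ext_R^{n+2}(M,R)$, but in fact it is needed at the very beginning to choose $\xx$. For an arbitrary $R$-sequence $\xx$ of length $n$, the Koszul ladder only gives an \emph{injection} $\Ext_R^1(M,R/\xx R)\hookrightarrow\Ext_R^{n+1}(M,R)$, not an isomorphism, so there is no reason for a nonsplit extension of $M$ by $R/\xx R$ to exist. The paper uses $\syz^{n+1}M\in\T_{n+1}$ together with \cite[(2.26), (4.18)]{AB} to obtain $\grade\Ext_R^{n+1}(M,R)\ge n$; this allows one to choose $\xx$ inside $\ann_R\Ext_R^{n+1}(M,R)$, and then the Koszul differential on $\Ext_R^{n+1}(M,R)$ vanishes, making the connecting map $F^1(M):\Ext_R^1(M,R/\xx R)\to\Ext_R^{n+1}(M,R)$ an isomorphism.

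Second, and more seriously, your planned endgame fails even if you could show that $\Ext_R^{n+1}(M,R)$ is a $k$-vector space. You claim this would let the proof of Lemma~\ref{ess3} run ``verbatim'' to give $M\cong\tr\syz^nk$ stably. But the bridge in Lemma~\ref{ess2} from ``$\Ext_R^{n+1}(X,R)$ is a $k$-vector space'' to ``$\tr\syz^nX$ is stably a $k$-vector space'' is the stable isomorphism $\tr\syz^nX\cong\Ext_R^1(\syz^nX,R)$, which holds precisely because $\pd_R\syz^nX=1$. For your $M$, Lemma~\ref{ess3} already forbids $\pd_RM=n+1$, so $\syz^nM$ is not of projective dimension one and this identification is unavailable. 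Hence the reduction to $\tr\syz^n k$ collapses.

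The paper's argument is shorter and avoids both problems: after choosing $\xx\subseteq\ann\Ext_R^{n+1}(M,R)$ as above, take a single nonsplit extension $\sigma:0\to R/\xx R\to E\to M\to 0$, apply Lemma~\ref{ess1} once to get $\syz^nE\cong\syz^nM$ stably, and then compare the two rows of the commutative ladder built from the maps $F^i$. Since $\pd_R(R/\xx R)=n$, the map $\gamma:\Ext_R^{n+1}(M,R)\to\Ext_R^{n+1}(E,R)$ is surjective between isomorphic finitely generated modules, hence bijective; this forces $\beta=0$, then $\alpha=0$, so $\Hom_R(\theta,R/\xx R)$ is surjective and $\sigma$ splits, a direct contradiction. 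There is no pullback by $a\in\m$, no claim about $k$-vector spaces, and no second appeal to Lemma~\ref{ess3}.
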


\begin{proof}
Let $X$ be a module in $\X\cap\SS_n$.
We want to show that $\Ext_R^{n+1}(X,R)=0$.
Since $\X\cap\SS_n$ is closed under direct summands, we may assume that $X$ is indecomposable.
According to Lemma \ref{ess3}, the $R$-module $X$ does not have projective dimension $n+1$.
If $\pd_RX\le n$, then it obviously holds that $\Ext_R^{n+1}(X,R)=0$.
So we may assume that $\pd_RX\ge n+2$.
The module $\syz^{n+1}X$ belongs to $\T_{n+1}$ by assumption, and we have $\grade\Ext_R^{n+1}(X,R)\ge n$ by \cite[Proposition (2.26) and Corollary (4.18)]{AB}.

Suppose that $\Ext_R^{n+1}(X,R)\ne0$.
Then the annihilator $\ann_R\Ext_R^{n+1}(X,R)$ is a proper ideal of $R$, and one can take an $R$-sequence $\xx=x_1,\dots,x_n$ in it.
Letting $K=\K(\xx,R)$ be the Koszul complex of $R$ with respect to $\xx$, one gets a free resolution
$$
0 \to K_n \xrightarrow{\delta} K_{n-1} \to \cdots \to K_1 \to K_0 \to R/\xx R \to 0
$$
of the $R$-module $R/\xx R$.
For an $R$-module $M$ we denote by $F^i(M)$ the composition
\begin{align*}
\Ext_R^i(M,R/\xx R)&\to\Ext_R^{i+1}(M,\syz(R/\xx R))\to\Ext_R^{i+2}(M,\syz^2(R/\xx R))\\
&\to\cdots\to\Ext_R^{i+n-1}(M,\syz^{n-1}(R/\xx R))\to\Ext_R^{i+n}(M,K_n)=\Ext_R^{i+n}(M,R)
\end{align*}
of connecting homomorphisms.

We claim that $F^1(X):\Ext_R^1(X,R/\xx R)\to\Ext_R^{n+1}(X,R)$ is an isomorphism.
In fact, since $X$ is in $\SS_n$, the composition $f:\Ext_R^1(X,R/\xx R)\to\Ext_R^2(X,\syz(R/\xx R))\to\cdots\to\Ext_R^n(X,\syz^{n-1}(R/\xx R))$ is bijective and the map $g:\Ext_R^n(X,\syz^{n-1}(R/\xx R))\to\Ext_R^{n+1}(X,R)$ is injective.
Since $\xx$ annihilates $\Ext_R^{n+1}(X,R)$, the map $\Ext_R^{n+1}(X,\delta):\Ext_R^{n+1}(X,K_n)\to\Ext_R^{n+1}(X,K_{n-1})$ is a zero map, and we see that the injective map $g$ is surjective.
Therefore $F^1(X)=gf$ is an isomorphism.

As we assume that $\Ext_R^{n+1}(X,R)$ does not vanish, neither does $\Ext_R^1(X,R/\xx R)$, and there exists a nonsplit short exact sequence
$$
\sigma:0 \to R/\xx R \xrightarrow{\theta} E \to X \to 0
$$
of $R$-modules.
By assumption $R/\xx R$ is in $\X$, and $\X$ is closed under extensions.
Hence $E$ is also in $\X$.
One has $\pd_R(R/\xx R)=n$, and $\pd_RE\ge n+2$ since $\pd_RX\ge n+2$.
Therefore we can apply Lemma \ref{ess1} to see that $\syz^nE$ is stably isomorphic to $\syz^nX$.
There is a commutative diagram
$$
\begin{CD}
\Hom_R(R/\xx R,R/\xx R) @>\alpha>> \Ext_R^1(X,R/\xx R) @>>> \Ext_R^1(E,R/\xx R) @>>> \Ext_R^1(R/\xx R,R/\xx R)\\
@V{F^0(R/\xx R)}VV @V{F^1(X)}V{\cong}V @V{F^1(E)}VV @V{F^1(R/\xx R)}VV \\                                                                  
\Ext_R^n(R/\xx R,R) @>\beta>> \Ext_R^{n+1}(X,R) @>\gamma>> \Ext_R^{n+1}(E,R) @>>> \Ext_R^{n+1}(R/\xx R,R)
\end{CD}
$$
with exact rows.
The fact that $R/\xx R$ has projective dimension $n$ implies $\Ext_R^{n+1}(R/\xx R,R)=0$, which shows that the map $\gamma$ is surjective.
As $\syz^nX$ is stably isomorphic to $\syz^nE$, the module $\Ext_R^{n+1}(X,R)$ is isomorphic to $\Ext_R^{n+1}(E,R)$.
It is seen from \cite[Theorem 2.4]{M} that $\gamma$ is an isomorphism, and hence $\beta=0$.
Since $F^1(X)$ is an isomorphism, diagram chasing implies that $\alpha=0$.
Hence the map $\Hom_R(\theta,R/\xx R):\Hom_R(E,R/\xx R)\to\Hom_R(R/\xx R,R/\xx R)$ is surjective, which means that the exact sequence $\sigma$ splits.
This contradiction shows that $\Ext_R^{n+1}(X,R)=0$.
\end{proof}

Now we have reached (the last fifth step of) the proof of the theorem.

\begin{proof}[Proof of Theorem \ref{main4}]
Since $\X$ is closed under syzygies, it suffices to deduce that $\Ext_R^{n+1}(X,R)=0$ for each $R$-module $X$ in $\X$.
The assumption (3) implies that $\syz^nX$ is $n$-torsionfree; see \cite[Corollary (4.18)]{AB}.
It follows from \cite[Proposition (2.21)]{AB} that there is an exact sequence
$$
0 \to Y \to Z \to X \to 0
$$
of $R$-modules with $Z=\tr\syz^n\tr\syz^nX$ such that $Y$ has projective dimension less than $n$.
The assumption (1) shows that $Y$ is in $\X$, and hence so is $Z$.
As $\syz^nX$ is in $\T_n$, Lemma \ref{iyam} implies that $Z$ is in $\SS_n$.
Thus we have $Z\in\X\cap\SS_n$.
Thanks to the assumptions (2) and (3), one can apply Lemma \ref{ess4} to see that $\Ext_R^{n+1}(Z,R)=0$.
Since $\Ext_R^n(Y,R)=0$, the above short exact sequence shows that $\Ext_R^{n+1}(X,R)=0$, which is what we want.
\end{proof}

From now on to the end of this section, we give several applications of Theorem \ref{main4}.
The first one is the following corollary, where the assumption on $R$ is satisfied for instance when $R$ is a Cohen-Macaulay local ring with an isolated singularity.
(Recall that a local ring $R$ is said to have an {\em isolated singularity} if for each nonmaximal prime ideal $\p$ of $R$ the local ring $R_\p$ is regular.)

\begin{cor}\label{m4c}
Let $R$ be a $d$-dimensional Cohen-Macaulay local ring that is locally Gorenstein on the punctured spectrum.
Let $\X$ be a resolving subcategory of $\mod R$ containing the modules of finite projective dimension.
Then $\Ext_R^{>d}(\X,R)=0$ if and only if $\Ext_{\mod\underline\X}^{>3d}(\mod\underline\X,\proj(\mod\underline\X))=0$.
\end{cor}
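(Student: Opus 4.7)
The forward direction is immediate: applying Proposition~\ref{qf} with $n=d$ and $\A=\mod R$ turns the hypothesis $\Ext_R^{>d}(\X,R)=0$ into $\Ext_{\mod\underline\X}^{>3d}(\mod\underline\X,\proj(\mod\underline\X))=0$, since $\proj(\mod R)$ is just the category of finitely generated free $R$-modules.

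For the converse, the plan is to apply Theorem~\ref{main4} with $n=d$ and verify its three hypotheses. Hypothesis~(1) is immediate from the standing assumption that $\X$ contains every $R$-module of finite projective dimension. Hypothesis~(2) also holds because $R$, being Cohen-Macaulay of dimension $d$, admits an $R$-regular sequence of length $d$, and the Koszul complex then exhibits $R/\xx R$ as a module of finite projective dimension (equal to $d$) for every such $\xx$, so that $R/\xx R\in\X$.

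The substantive step is Hypothesis~(3): for every $X\in\X$, the module $M:=\syz^{d+1}X$ must be $(d+1)$-torsionfree. Here the locally Gorenstein hypothesis is exploited by localization. For every $\p\in\spec R\setminus\{\m\}$, the ring $R_\p$ is Gorenstein of dimension at most $d-1$, so $X_\p$ has finite Gorenstein dimension bounded by $\dim R_\p\le d-1$, and consequently $M_\p=\syz^{d+1}(X_\p)$ is an iterated syzygy of a totally reflexive $R_\p$-module, hence itself totally reflexive. Therefore $\Ext_R^i(\tr M,R)_\p=0$ for $i\ge 1$ and $\p\ne\m$, so each $R$-module $\Ext_R^i(\tr M,R)$ has finite length. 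To upgrade this to actual vanishing in the range $1\le i\le d+1$, one combines two further observations: $M$ is maximal Cohen-Macaulay (as a $(d+1)$-st syzygy over a $d$-dimensional Cohen-Macaulay ring), and the Auslander-Bridger exact sequence $0\to\Ext^1(\tr M,R)\to M\to M^{**}\to\Ext^2(\tr M,R)\to 0$, together with the isomorphism $\Ext^i(\tr M,R)\cong\Ext^{i-2}(M^*,R)$ for $i\ge 3$, reduces the question inductively to Ext-vanishing for the dual $M^*$ of an MCM module, which is handled by depth/grade arguments exploiting that the MCM-property of the modules involved controls their finite-length Ext groups against $R$.

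The main obstacle is this final depth/grade step. Since $R$ itself is not assumed to be Gorenstein, the usual Evans-Griffith criterion (which would require Gorensteinness in codimension $\le d$) does not directly apply; instead, one must carefully combine the locally Gorenstein property on the punctured spectrum with the Cohen-Macaulay structure of $R$ to rule out $\m$ as an associated prime of the already finite-length modules $\Ext_R^i(\tr M,R)$ for $1\le i\le d+1$, thereby forcing them to vanish and so yielding $M\in\T_{d+1}$.
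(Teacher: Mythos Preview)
Your overall structure is correct: the forward implication is Proposition~\ref{qf} with $n=d$, and the reverse implication should go through Theorem~\ref{main4}; your verifications of hypotheses~(1) and~(2) are fine. The gap is hypothesis~(3), which you explicitly leave unfinished. You reduce to showing that the finite-length modules $\Ext_R^i(\tr M,R)$ (for $1\le i\le d+1$, $M=\syz^{d+1}X$) vanish, but your closing paragraph only asserts that ``one must carefully combine'' various ingredients without carrying this out. Some of these steps are genuinely delicate: once you know $M$ is reflexive you still need $\Ext_R^j(M^*,R)=0$ for $1\le j\le d-1$, and nothing you have recorded controls $M^*$ well enough (it need not be MCM when $R$ is not Gorenstein) to deduce this from finite length alone.

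The paper closes this gap by a different and much shorter route that avoids localization entirely. It observes the chain
\[
\syz^{d+1}\X \;\subseteq\; \syz^{d+1}(\mod R) \;=\; \syz\cm(R) \;\subseteq\; \T_{d+1}.
\]
The middle equality uses the Evans--Griffith syzygy theorem $\cm(R)=\T_d=\syz^d(\mod R)$, which \emph{does} apply here because ``locally Gorenstein on the punctured spectrum'' for a $d$-dimensional local ring is exactly Gorensteinness in codimension $\le d-1$ (so your remark that Evans--Griffith fails is only correct for the target $\T_{d+1}$, not for $\T_d$). For the final inclusion one takes $N'\in\cm(R)$ and invokes \cite[Proposition~(2.21)]{AB} to produce an exact sequence $0\to F\to N\to N'\to 0$ with $F$ free and $\Ext_R^1(N,R)=0$; then $N$ is MCM, so $N\in\SS_1\cap\T_d$, and Lemma~\ref{iyam} yields $\syz N'\cong\syz N\in\SS_0\cap\T_{d+1}=\T_{d+1}$. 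The key idea you are missing is precisely this: Evans--Griffith already delivers $\T_d$, and the Auslander--Bridger approximation together with the $\SS/\T$-shifting of Lemma~\ref{iyam} is the device that promotes $\T_d$ to $\T_{d+1}$ without requiring Gorensteinness at the closed point.
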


\begin{proof}
The `only if' part follows from Proposition \ref{qf}.
As to the `if' part, in view of Theorem \ref{main4}, it is enough to prove that $\syz^{d+1}\X$ is contained in $\T_{d+1}$.
We have
$$
\syz^{d+1}\X\subseteq\syz^{d+1}(\mod R)=\syz\cm(R)\subseteq\T_{d+1}.
$$
Indeed, the first inclusion is obvious.
It follows from \cite[Theorem 3.8]{EG} that $\cm(R)=\T_d=\syz^d(\mod R)$, which implies the equality.
As for the second inclusion, let $M$ be an MCM $R$-module.
Then there is an exact sequence
$$
0 \to F \to N \to M \to 0
$$
of $R$-modules such that $\Ext_R^1(N,R)$ vanishes and $F$ is free; see \cite[Proposition (2.21)]{AB}.
This short exact sequence shows that $N$ is an MCM $R$-module, and we have $N\in\SS_1\cap\T_d$.
Lemma \ref{iyam} implies that $\syz M=\syz N$ is in $\T_{d+1}$, and thus the inclusion considered follows.
\end{proof}

The next application is a characterization of Gorenstein rings.
Note that the second condition in the result below corresponds to $\cb$ for $n=d$.

\begin{cor}\label{4.5}
Let $R$ be a $d$-dimensional Cohen-Macaulay local ring which is locally Gorenstein on the punctured spectrum.
The following are equivalent.
\begin{enumerate}[\rm(1)]
\item
$R$ is Gorenstein.
\item
$\lmod R$ is Gorenstein of dimension at most $3d$.
\item
$\Ext_{\mod(\lmod R)}^{3d+1}(\mod(\lmod R),\proj(\mod(\lmod R)))=0$.
\end{enumerate}
\end{cor}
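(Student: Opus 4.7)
The plan is to establish the cycle $(1)\Rightarrow(2)\Rightarrow(3)\Rightarrow(1)$. The crucial observation is that $\mod R$ is itself a resolving subcategory of $\mod R$ that trivially contains every $R$-module of finite projective dimension; thus all the machinery built in Sections \ref{sect3} and \ref{sect4} applies to the choice $\X=\mod R$.

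For $(1)\Rightarrow(2)$, I will verify that $\X=\mod R$ satisfies the condition $(\g_d)$. When $R$ is Gorenstein of dimension $d$, every finitely generated $R$-module has Gorenstein projective dimension at most $d$, so $\syz^d(\mod R)\subseteq\gp(\mod R)$; in fact $\syz^d(\mod R)=\cm(R)=\gp(\mod R)$ under this hypothesis. Closure under cosyzygies is then automatic from Remark \ref{3.4}(2), which says that $\gp(\mod R)$ itself is closed under cosyzygies. Theorem \ref{main3} applied to $\X=\mod R$ now yields exactly that $\lmod R$ is Gorenstein of dimension at most $3d$.

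For $(2)\Rightarrow(3)$, only formal homological algebra is needed: if every $F\in\mod(\lmod R)$ has Gorenstein projective dimension at most $3d$, then $\syz^{3d}F$ is a Gorenstein projective object of $\mod(\lmod R)$, and by the very definition of a complete resolution one has $\Ext^{\ge 1}(G,P)=0$ for any Gorenstein projective $G$ and any projective $P$. Dimension shifting then gives $\Ext^{3d+1}(F,P)\cong\Ext^1(\syz^{3d}F,P)=0$, which is condition $(3)$.

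The main step is $(3)\Rightarrow(1)$, and this is exactly where the assumption that $R$ is locally Gorenstein on the punctured spectrum enters. I will apply Corollary \ref{m4c} to $\X=\mod R$: the vanishing assumed in $(3)$ translates into $\Ext_R^{>d}(\mod R,R)=0$, and specializing to $M=k$ produces $\Ext_R^{d+1}(k,R)=0$. Combined with $R$ being Cohen-Macaulay of dimension $d$, the standard criterion (depth equals injective dimension for Gorenstein rings) forces $\mathrm{id}_R R\le d$, so $R$ is Gorenstein. The main obstacle is in fact already overcome by the work culminating in Corollary \ref{m4c}; granting that, the present corollary is essentially a specialization at $\X=\mod R$, and I do not expect additional difficulty beyond correctly checking that the hypotheses of Corollary \ref{m4c} are inherited.
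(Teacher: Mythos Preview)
Your proposal is correct and follows essentially the same route as the paper: for $(1)\Rightarrow(2)$ you verify $(\g_d)$ for $\X=\mod R$ and invoke Theorem~\ref{main3}; for $(3)\Rightarrow(1)$ you specialize Corollary~\ref{m4c} to $\X=\mod R$ and read off Gorensteinness from $\Ext_R^{>d}(k,R)=0$. One small wording issue: in the last step you single out $\Ext_R^{d+1}(k,R)=0$ and cite ``depth equals injective dimension for Gorenstein rings'', which is circular; the clean argument (which you in fact have available) is that $\Ext_R^{>d}(k,R)=0$ forces $\mathrm{id}_RR\le d$ by the standard characterization $\mathrm{id}_RR=\sup\{i:\Ext_R^i(k,R)\ne0\}$, exactly as the paper does.
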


\begin{proof}
(1) $\Rightarrow$ (2):
Since $R$ is Gorenstein, one has $\syz^d(\mod R)=\cm(R)=\gp(\mod R)$, and this is closed under cosyzygies.
Theorem \ref{main3} shows the implication.

(2) $\Rightarrow$ (3):
The implication is straightforward.

(3) $\Rightarrow$ (1):
By virtue of Corollary \ref{m4c}, one obtains $\Ext_R^{>d}(\mod R,R)=0$, which especially says that $\Ext_R^{>d}(k,R)=0$.
Therefore $R$ is a Gorenstein ring.
\end{proof}

Let $n=0$ in Theorem \ref{main4}.
Then the three conditions (1)--(3) in the theorem are trivially satisfied; see \cite[Lemma 3.4]{EG}.
Hence the following result holds, whose two assertions are nothing but \cite[Theorem 4.2]{Y2} and \cite[Corollary 4.3]{Y2}, respectively.
(The second assertion is shown along the same lines as in the proof of Corollary \ref{4.5}.)

\begin{cor}
\begin{enumerate}[\rm(1)]
\item
Let $\X$ be a resolving subcategory of $\mod R$.
If $\mod\underline\X$ is a quasi-Frobenius category, then $\Ext_R^{>0}(\X,R)=0$.
\item
The following statements are equivalent.
\begin{enumerate}[\rm(a)]
\item
$R$ is an artinian Gorenstein ring.
\item
$\mod(\lmod R)$ is a Frobenius category.
\item
$\mod(\lmod R)$ is a quasi-Frobenius category.
\end{enumerate}
\end{enumerate}
\end{cor}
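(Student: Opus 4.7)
The plan is to deduce both assertions from Theorem \ref{main4} with $n=0$, together with the Yoshino corollary that immediately follows Theorem \ref{main3}. Before applying Theorem \ref{main4}, I would first check that its three hypotheses become vacuous or automatic when $n=0$: condition (1) holds vacuously, since there is no module of negative projective dimension; condition (2) holds because the empty $R$-sequence gives $R/\xx R=R$, which lies in any resolving subcategory; and condition (3) reduces to the statement that every first syzygy is $1$-torsionfree, which holds because a first syzygy embeds into a free module, as recorded in \cite[Lemma 3.4]{EG}.

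With these preliminaries in place, part (1) is immediate: the hypothesis that $\mod\underline\X$ is quasi-Frobenius is precisely $\Ext^1_{\mod\underline\X}(\mod\underline\X,\proj(\mod\underline\X))=0$, so Theorem \ref{main4} applied with $n=0$ yields $\Ext^{>0}_R(\X,R)=0$.

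For part (2), the implication (b) $\Rightarrow$ (c) is tautological, a Frobenius category being quasi-Frobenius by definition. For (a) $\Rightarrow$ (b), I would take $\X=\mod R$: when $R$ is artinian Gorenstein, hence quasi-Frobenius as a ring, every $R$-module is Gorenstein projective, and $\mod R$ is trivially closed under cosyzygies, so $\X$ satisfies $(\g_0)$. The second part of the Yoshino corollary then gives that $\mod(\lmod R)=\mod\underline\X$ is a Frobenius category. For (c) $\Rightarrow$ (a), I would apply part (1) with $\X=\mod R$ to conclude $\Ext^{>0}_R(\mod R,R)=0$; specializing to $M=k$ forces $R$ to have injective dimension zero over itself, so $R$ is a commutative Noetherian self-injective ring, i.e., artinian Gorenstein.

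No step presents a real obstacle, since the substance has been done in Theorem \ref{main4} and the Yoshino corollary. The only points needing care are the verification of condition (3) of Theorem \ref{main4} at $n=0$, which is classical Auslander--Bridger torsion-freeness of syzygies, and the standard deduction that $\Ext^{>0}_R(k,R)=0$ forces $R$ to be self-injective, via the identification of injective dimension with $\sup\{\,i:\Ext^i_R(k,R)\neq 0\,\}$.
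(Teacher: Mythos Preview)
Your proposal is correct and follows essentially the same approach as the paper: both reduce to Theorem \ref{main4} at $n=0$ after checking its hypotheses become automatic (the paper cites \cite[Lemma 3.4]{EG} for this, exactly as you do for condition (3)), and both handle part (2) along the lines of Corollary \ref{4.5}, using the $(\g_0)$ condition and the Yoshino corollary for (a)$\Rightarrow$(b) and the vanishing $\Ext^{>0}_R(k,R)=0$ for (c)$\Rightarrow$(a).
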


\section{A sufficient condition for singular equivalence}\label{sect5}

In this section we study singular equivalences among stable quasi-resolving subcategories.
First of all, let us make the precise definition of singular equivalence.

\begin{dfn}
Let $\C$ and $\C'$ be additive categories such that $\mod\C$ and $\mod\C'$ are abelian.
We say that $\C$ and $\C'$ are {\em singularly equivalent} if there exists a triangle equivalence $\ds(\C)\cong\ds(\C')$.
\end{dfn}

Our main interest is to ask when this is the case for the stable categories of quasi-resolving subcategories.
The main result of this section gives an answer to this question.
We begin with a proposition.

\begin{prop}\label{ho}
Let $\X$ and $\Y$ be quasi-resolving subcategories of $\A$.
Let $n$ be a nonnegative integer.
The following are equivalent.
\begin{enumerate}[\rm(1)]
\item
One has $\syz^n\X\subseteq\Y\subseteq\X\cap\gp\A$ and $\Y$ is closed under cosyzygies.
\item
One has $\syz^n\X\subseteq\Y\subseteq\X\cap\gp\A$ and $\syz^n\X$ is closed under cosyzygies.
\item
One has $\syz^n\X=\Y=\X\cap\gp\A$ and this is closed under cosyzygies.
\end{enumerate}
When this is the case, $\X\cap\gp\A$ is closed under extensions.
\end{prop}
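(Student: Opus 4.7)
My plan is to prove the equivalences via the cycle $(3) \Rightarrow (1) \Rightarrow (2) \Rightarrow (3)$, and to handle the final extension-closure statement as a separate consequence of (3). The implications out of (3) are immediate since (3) is the strongest of the three, so the substantive content lies in $(1) \Rightarrow (2)$, $(2) \Rightarrow (3)$, and the concluding assertion.

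For $(1) \Rightarrow (2)$, I would fix $M \in \syz^n\X$; the inclusion $\syz^n\X \subseteq \Y$ together with iterated cosyzygy closure of $\Y$ yields $\syz^{-n-1}M \in \Y \subseteq \X$. Since $M$ is Gorenstein projective, up to projective summands one has $\syz^n(\syz^{-n-1}M) \cong \syz^{-1}M$, which therefore lies in $\syz^n\X$. For $(2) \Rightarrow (3)$, the key observation is this: for any $M \in \X \cap \gp\A$ (respectively $M \in \Y$), one has $\syz^n M \in \syz^n\X$ by definition, and $n$-fold application of the cosyzygy closure of $\syz^n\X$ returns $M$ to $\syz^n\X$ up to projective summands, using $M \in \gp\A$. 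This forces $\X \cap \gp\A \subseteq \syz^n\X$ and $\Y \subseteq \syz^n\X$, which together with the reverse inclusions from (2) gives $\syz^n\X = \Y = \X \cap \gp\A$.

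The main obstacle is showing that $\X \cap \gp\A$ is closed under extensions, since $\X$ itself is only quasi-resolving and need not be closed under extensions. To circumvent this, given $0 \to A \to B \to C \to 0$ with $A, C \in \X \cap \gp\A$, I would first invoke Remark~\ref{3.4}(3) to obtain $B \in \gp\A$. Then, embedding $A$ into a projective through $0 \to A \to P \to \syz^{-1}A \to 0$ and pushing out along $A \hookrightarrow B$ produces a commutative diagram whose bottom row and right column are
$$
0 \to P \to Z \to C \to 0 \qquad\text{and}\qquad 0 \to B \to Z \to \syz^{-1}A \to 0.
$$
Since $C \in \gp\A$ forces $\Ext_\A^1(C,P)=0$, the bottom row splits and $Z \cong P \oplus C \in \X$. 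By the already-established characterization (3), $\syz^{-1}A \in \syz^n\X \subseteq \X$. Finally, the closure of $\X$ under kernels of epimorphisms applied to the right column yields $B \in \X$, as required.
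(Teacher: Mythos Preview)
Your proof is correct and follows essentially the same route as the paper's. The only cosmetic difference is that for $(1)\Rightarrow(2)$ the paper simply invokes Proposition~\ref{3.5} (specifically the implication $(5)\Rightarrow(1)$ there), whereas you unpack that argument directly; your pushout argument for the extension closure of $\X\cap\gp\A$ is exactly the paper's, just with the intermediate object $Z$ named explicitly before observing $Z\cong P\oplus C$.
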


\begin{proof}
It is trivial that (3) implies (1), and we see from Proposition \ref{3.5} that (1) implies (2).
Pick an object $M\in\X\cap\gp\A$.
Then $M$ is Gorenstein projective, so we have $M\cong\syz^{-n}(\syz^nM)$.
As $M$ is in $\X$, the syzygy $\syz^nM$ is in $\syz^n\X$.
If $\syz^n\X$ is closed under cosyzygies, then $\syz^{-n}(\syz^nM)$ belongs to $\syz^n\X$ and so does $M$.
This shows that (2) implies (3), and consequently, the conditions (1)--(3) are equivalent.

Suppose that one of the three equivalent conditions is satisfied.
Let $0\to L\xrightarrow{f}M\to N\to0$ be an exact sequence in $\A$ with $L,N\in\X\cap\gp\A$.
There is an exact sequence $0\to L\xrightarrow{g}P\to\syz^{-1}L\to0$ with $P\in\proj\A$.
As $\Ext_\A^1(N,P)=0$, the pushout diagram of $f$ and $g$ gives rise to an exact sequence
$$
0\to M\to P\oplus N\to\syz^{-1}L\to0.
$$
The object $P\oplus N$ is in $\X$, and so is $\syz^{-1}L$ since $\X\cap\gp\A$ is closed under cosyzygies.
By definition $\X$ is closed under kernels of epimorphisms, and hence $M$ belongs to $\X$.
As $\gp\A$ is closed under extensions, $M$ is in $\X\cap\gp\A$.
Thus $\X\cap\gp\A$ is closed under extensions.
\end{proof}

As an immediate consequence of Proposition \ref{ho}, we have:

\begin{cor}
Let $\X$ be a quasi-resolving subcategory of $\A$.
If $\X$ satisfies $(\g_n)$. then $\syz^n\X=\X\cap\gp\A$.
\end{cor}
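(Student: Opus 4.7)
The plan is to invoke Proposition \ref{ho} directly, with the auxiliary subcategory $\Y$ set equal to $\syz^n\X$ itself. To make this legal I first need to check that $\syz^n\X$ is a legitimate candidate for $\Y$ in the proposition, i.e.\ that $\syz^n\X \subseteq \X \cap \gp\A$. The inclusion into $\gp\A$ is immediate from $(\g_n)$. The inclusion into $\X$ comes from the fact that $\X$ is quasi-resolving: given $X \in \X$, the defining exact sequence $0 \to \syz X \to P \to X \to 0$ with $P \in \proj\A \subseteq \X$ forces $\syz X \in \X$ because $\X$ is closed under kernels of epimorphisms. Iterating gives $\syz^i X \in \X$ for every $i \ge 0$, so $\syz^n\X \subseteq \X$.

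Next I note that $(\g_n)$ also provides the second hypothesis required by condition (2) of Proposition \ref{ho}, namely that $\syz^n\X$ is closed under cosyzygies. Thus, setting $\Y := \syz^n\X$, the chain $\syz^n\X \subseteq \Y \subseteq \X \cap \gp\A$ holds (the first containment trivially as equality) together with the closure of $\syz^n\X$ under cosyzygies. This is exactly condition (2) of Proposition \ref{ho}.

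Finally I apply the equivalence (2)$\Leftrightarrow$(3) of that proposition: condition (3) then yields $\syz^n\X = \Y = \X \cap \gp\A$, which is the desired equality. No obstacle really arises beyond unpacking the definitions; the only conceptual point is to recognize that $\Y = \syz^n\X$ can be used as its own witness in Proposition \ref{ho}, after which the corollary is just a restatement of part of the equivalence already established there.
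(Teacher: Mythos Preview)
Your proof is correct in substance and follows the paper's own route (the paper also simply cites Proposition~\ref{ho}, together with Proposition~\ref{3.5}). There is one minor formal point worth cleaning up: Proposition~\ref{ho} is stated under the standing hypothesis that $\Y$ is a \emph{quasi-resolving} subcategory, and you have not verified this for your choice $\Y=\syz^n\X$; closure of $\syz^n\X$ under kernels of epimorphisms is not obvious before the corollary itself is established. The easiest fix is to take $\Y=\X\cap\gp\A$ instead, which is transparently quasi-resolving since both $\X$ and $\gp\A$ are (Remark~\ref{3.4}(3)); your verifications that $\syz^n\X\subseteq\X\cap\gp\A$ and that $\syz^n\X$ is closed under cosyzygies then give exactly condition~(2) for this $\Y$, and condition~(3) yields $\syz^n\X=\X\cap\gp\A$. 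Alternatively, one may simply observe that the proof of the implication (2)$\Rightarrow$(3) in Proposition~\ref{ho} never actually uses the quasi-resolving hypothesis on $\Y$.
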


Now we state the main result of this section.

\begin{thm}\label{main1}
Let $\X$ be a quasi-resolving subcategory of $\A$ satisfying the condition $(\g_n)$ for some $n\ge0$.
\begin{enumerate}[\rm(1)]
\item
The equality $\syz^n\X=\X\cap\gp\A$ holds.
Denote this subcategory by $\Y$.
\item
One has the following.
\begin{enumerate}[\rm(a)]
\item
$\Y$ is a quasi-resolving subcategory of $\A$ satisfying $(\g_0)$.
\item
$\Y$ is a Frobenius subcategory with $\proj\Y=\proj\A$.
\item
$\underline\Y$ is a triangulated category.
\item
$\underline\Y$ is a strictly full subcategory of $\lgp\A$.
\end{enumerate}
\item
There are triangle equivalences
$$
\ds(\underline\X)\cong\lgp(\mod\underline\X)=\underline{\syz^{3n}(\mod\underline\X)}\cong\underline{\mod\underline\Y}=\lgp(\mod\underline\Y)\cong\ds(\underline\Y),
$$
where the middle equivalence is induced by the restriction functor $F\mapsto F|_{\underline\Y}$.
In particular, $\underline\X$ and $\underline\Y$ are singularly equivalent.
\end{enumerate}
\end{thm}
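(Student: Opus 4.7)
The plan is to verify the three assertions in order, with (1) and (2) amounting to structural checks and (3) containing the main content. First, I would show $\Y := \X \cap \gp\A$ is a quasi-resolving subcategory of $\A$: $\proj\A \subseteq \Y$ and closure under finite direct sums are immediate, while closure under kernels of epimorphisms follows from the fact that $\X$ is quasi-resolving and $\gp\A$ is resolving. Hypothesis (2) of Proposition \ref{ho} is then satisfied — one has $\syz^n\X \subseteq \X \cap \gp\A = \Y$ (using closure of $\X$ under syzygies together with $(\g_n)$), and $\syz^n\X$ is closed under cosyzygies by $(\g_n)$ — so its conclusion (3) yields both the equality in (1) and the cosyzygy-closure of $\Y$. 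This finishes (1) and clause (a) of (2), since $(\g_0)$ for $\Y$ amounts to $\Y \subseteq \gp\A$ (tautological) together with the cosyzygy-closure just established.

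For (2b), $\Y$ inherits an exact structure from $\A$ via closure under extensions (Proposition \ref{ho}); the short exact sequences $0 \to \syz Y \to P \to Y \to 0$ give enough projectives, while the dual sequences $0 \to Y \to P \to \syz^{-1} Y \to 0$ guaranteed by Proposition \ref{3.5} (applied to $\Y$ witnessing its own $(\g_0)$) give enough injectives, with both classes equal to $\proj\A$. Hence $\Y$ is Frobenius with $\proj\Y = \proj\A$. Clauses (c) and (d) are then formal: the stable category of a Frobenius category is triangulated, and because $\proj\Y = \proj\A$ the canonical functor $\underline{\Y} \to \lgp\A$ is a fully faithful inclusion.

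Part (3) decomposes into three triangle equivalences whose outer two and the equality $\lgp(\mod\underline{\X}) = \underline{\syz^{3n}(\mod\underline{\X})}$ are immediate from Theorem \ref{main3}: applied to $\X$ it yields $\ds(\underline{\X}) \cong \lgp(\mod\underline{\X})$ and $\underline{\X}$ Gorenstein of dimension at most $3n$; applied to $\Y$ (satisfying $(\g_0)$) it yields $\ds(\underline{\Y}) \cong \lgp(\mod\underline{\Y})$ together with $\mod\underline{\Y} = \gp(\mod\underline{\Y})$. The final singular equivalence follows by composition.

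The main obstacle is the central equivalence $\underline{\syz^{3n}(\mod\underline{\X})} \cong \underline{\mod\underline{\Y}}$ induced by the restriction $\Phi: F \mapsto F|_{\underline{\Y}}$. The technical core is the claim that for every $X \in \X$, $\lhom_\A(-, X)|_{\underline{\Y}}$ is representable on $\underline{\Y}$: for $Y \in \Y \subseteq \gp\A$, iteration of the short exact sequences $0 \to \syz^{i+1}X \to P_i \to \syz^i X \to 0$ together with the vanishing $\Ext_\A^{\ge 1}(Y, \proj\A) = 0$ yields natural isomorphisms $\lhom_\A(Y, X) \cong \Ext_\A^n(Y, \syz^n X) \cong \lhom_\A(Y, \syz^{-n}\syz^n X)$, and $\syz^{-n}\syz^n X \in \Y$ by the cosyzygy closure from part (1). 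Granted this, $\Phi$ becomes an exact functor $\mod\underline{\X} \to \mod\underline{\Y}$ sending projectives to projectives, hence preserving syzygies and Gorenstein projective objects, and so induces the desired triangle functor $\bar\Phi$. Essential surjectivity of $\bar\Phi$ is straightforward by lifting presentations of $G \in \mod\underline{\Y}$ by representables $\lhom_\A(-, Y_i)|_{\underline{\Y}}$ to $\underline{\X}$-modules (via $Y_i \in \Y \subseteq \X$) and passing to a sufficiently high syzygy. The hardest step will be full faithfulness: I would use the canonical resolutions from Proposition \ref{3.3}(2) to express stable Hom-sets on both sides in terms of maps between the common $\Y$-data $(\syz^n A, \syz^n B, \syz^n C)$, and then invoke the representability lemma above to reduce to a diagram chase.
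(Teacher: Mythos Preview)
Your proposal is correct and follows essentially the same architecture as the paper's proof: parts (1) and (2) via Proposition~\ref{ho}, the outer equivalences in (3) via Theorem~\ref{main3}, and the middle equivalence via the restriction functor, anchored by the representability lemma $\lhom_\A(-,X)|_{\underline\Y}\cong\lhom_\A(-,\syz^{-n}\syz^nX)|_{\underline\Y}$ (this is the paper's Claim~1). The paper treats fullness and faithfulness as two separate claims (Claims~3 and~4), each an explicit diagram chase lifting morphisms of projective presentations between $\underline\Y$ and $\underline\X$, and for essential surjectivity (Claim~2) it lifts a presentation $\lhom_\A(-,f)|_{\underline\Y}$ of $G$ to $\lhom_\A(-,\syz^{-n}f)|_{\underline\X}$ and then reads off the $(3n)$th syzygy from the long exact sequence; your variant of lifting $f$ directly along $\Y\subseteq\X$ also works once one observes that the resulting cokernel is already Gorenstein projective in $\mod\underline\X$ (the complete resolution comes from the short exact sequence in $\Y$, exactly as in the proof of Theorem~\ref{main3}).
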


\begin{proof}
(1) The assertion follows from Propositions \ref{ho} and \ref{3.5}.

(2) Proposition \ref{ho} implies that $\Y$ is closed under cosyzygies and extensions.
Since $\X$ and $\gp\A$ are quasi-resolving, so is $\Y$.
Thus (a) and (b) hold, while (c) is a consequence of (b).
The induced functor $\underline\Y\to\lgp\A$ is fully faithful, and via this functor we can regard $\underline\Y$ as a full subcategory of $\lgp\A$.
In view of Remark \ref{lmpn}(1), we easily observe that $\underline\Y$ is closed under isomorphism as a full subcategory of $\lgp\A$.
This shows (d).

(3) Note that $\Y$ satisfies $(\g_0)$.
By virtue of Theorem \ref{main3}, we have equalities $\gp(\mod\underline\X)=\syz^{3n}\mod\underline\X$ and $\gp(\mod\underline\Y)=\mod\underline\Y$, and triangle equivalences $\ds(\underline\X)\cong\lgp(\mod\underline\X)$ and $\ds(\underline\Y)\cong\lgp(\mod\underline\Y)$.
The restriction $F\mapsto F|_{\underline\Y}$ makes a covariant exact functor
$$
\Phi:\Mod\underline\X\to\Mod\underline\Y
$$
of abelian categories.
We establish several claims.

\begin{claim}\label{c1}
The functor $\Phi$ sends $\lhom_\A(-,X)|_{\underline\X}$ with $X\in\X$ to $\lhom_\A(-,Y)|_{\underline\Y}$ with $Y=\syz^{-n}\syz^nX\in\Y$.
Hence one has $\Phi(\proj(\mod\underline\X))\subseteq\proj(\mod\underline\Y)$.
\end{claim}

\begin{proof}[Proof of Claim]
Using Proposition \ref{longex}(2) and the fact that $\Ext_{\A}^{>0}(\Y,\proj\A)=0$, we have isomorphisms
\begin{align*}
\lhom_{\A}(-,X)|_{\underline\Y}&\cong \Ext_{\A}^1(-,\syz X)|_{\underline\Y} 
\cong \Ext_\A^2(-,\syz^2X)|_{\underline\Y}\cong \cdots \cong \Ext_{\A}^n(-,\syz^n X)|_{\underline\Y},\\
\lhom_{\A}(-,\syz^{-n} \syz^n X)|_{\underline\Y}&\cong \Ext_{\A}^1(-,\syz^{1-n} \syz^n X)|_{\underline\Y}\cong \Ext_{\A}^2(-,\syz^{2-n} \syz^n X)|_{\underline\Y}\cong\cdots\cong\Ext_\A^n(-,\syz^nX)|_{\underline\Y}.
\end{align*}
Therefore there is an isomorphism $\lhom_{\A}(-,X)|_{\underline\Y}\cong\lhom_{\A}(-,\syz^{-n} \syz^n X)|_{\underline\Y}$.
Since $\syz^nX\in\Y$ and $\syz^{-1}\Y\subseteq\Y$, we have $\syz^{-n} \syz^n X\in\Y$.
\renewcommand{\qedsymbol}{$\square$}
\end{proof}

It is easy to see from Claim \ref{c1} that $\Phi$ induces an exact functor $\mod\underline\X\to\mod\underline\Y$ and a triangle functor
$$
\phi: \underline{\syz^{3n}\mod\underline\X} \longrightarrow \underline{\mod\underline\Y}.
$$

\begin{claim}\label{c2}
The functor $\phi$ is dense.
\end{claim}

\begin{proof}[Proof of Claim]
Let $G$ be an object in $\mod\underline\Y$.
Take a projective presentation $\lhom_{\A}(-,Y_1)|_{\underline\Y} \xrightarrow{p} \lhom_{\A}(-,Y_0)|_{\underline\Y} \to G \to 0$ with $Y_0, Y_1\in\Y$.
One can write $p=\lhom_{\A}(-,f)|_{\underline\Y}$ for some morphism $f:Y_1 \to Y_0$ (see Remark \ref{yoneda}(1)). 
Let $F$ be the cokernel of the morphism
$$
\lhom_{\A}(-,\syz^{-n}f)|_{\underline\X}:\lhom_\A(-,\syz^{-n}Y_1)|_{\underline\X}\to\lhom_\A(-,\syz^{-n}Y_0)|_{\underline\X}.
$$
As $\syz^{-n}Y_0$ and $\syz^{-n}Y_1$ are in $\Y$ and hence in $\X$, it is seen that $\lhom_{\A}(-,\syz^{-n}f)|_{\underline\X}$ is an $\underline\X$-homomorphism of projective $\underline\X$-modules, and $F$ belongs to $\mod\underline\X$.
There is an exact sequence $0 \to Y_2 \to Y_1\oplus P \xrightarrow{(f,g)} Y_0 \to 0$ with $P\in\proj\A$ and $Y_2\in\Y$ (see Remark \ref{lmpn}), which induces an exact sequence $0 \to \syz^{-n}Y_2 \to \syz^{-n}Y_1 \to \syz^{-n}Y_0 \to 0$.
Proposition \ref{longex}(2) shows that there is an exact sequence
\begin{align*}
\lhom_\A(-,Y_1)|_{\underline\X}&\xrightarrow{\lhom_\A(-,f)|_{\underline\X}}\lhom_\A(-,Y_0)|_{\underline\X}\to\lhom_\A(-,\syz^{-1}Y_2)|_{\underline\X}\to\cdots\\
&\to\lhom_\A(-,\syz^{-n}Y_1)|_{\underline\X}\xrightarrow{\lhom_\A(-,\syz^{-n}f)|_{\underline\X}}\lhom_\A(-,\syz^{-n}Y_0)|_{\underline\X}\to F\to0.
\end{align*}
Letting $H$ be the cokernel of $\lhom_\A(-,f)|_{\underline\X}$, we observe that $H$ is the $(3n)$th syzygy of $F$.
Thus $H$ is in $\syz^{3n}\mod\underline\X$, and it is obvious that $\phi(H)=G$.
Consequently, $\phi$ is a dense functor.
\renewcommand{\qedsymbol}{$\square$}
\end{proof}

\begin{claim}\label{c3}
The functor $\phi$ is full.
\end{claim}

\begin{proof}[Proof of Claim]
Take objects $F,G\in\syz^{3n}\mod\underline\X$ and a morphism $\rho\in\Hom_{\mod\underline\Y}(F|_{\underline\Y},G|_{\underline\Y})$.
There exists an object $F'\in\mod\underline\X$ whose $(3n)$th syzygy is $F$.
Using Proposition \ref{3.3}(2), we obtain an exact sequence $0 \to X_2 \to X_1 \to X_0 \to 0$ of objects in $\X$ which induces a projective resolution
\begin{align*}
\lhom_\A(-,\syz^nX_1)|_{\underline\X} &\xrightarrow{\lhom_\A(-,f)|_{\underline\X}} \lhom_\A(-,\syz^nX_0)|_{\underline\X} \to \lhom_\A(-,\syz^{n-1}X_2)|_{\underline\X}\\
&\to \cdots \to \lhom_\A(-,X_2)|_{\underline\X} \to  \lhom_\A(-,X_1)|_{\underline\X} \to \lhom_\A(-,X_0)|_{\underline\X} \to F' \to 0
\end{align*}
of $F'$ in $\mod\underline\X$, where $f:\syz^nX_1\to\syz^nX_0$ is a morphism in $\A$.
By Schanuel's lemma, we may assume that $F$ is the cokernel of $\lhom_\A(-,f)|_{\underline\X}$.
Applying the same argument to $G$, we get two exact sequences
\begin{align*}
&\lhom_\A(-,\syz^nX_1)|_{\underline\X} \xrightarrow{\lhom_\A(-,f)|_{\underline\X}} \lhom_\A(-,\syz^nX_0)|_{\underline\X} \to F \to 0,\\
&\lhom_\A(-,\syz^nX_1')|_{\underline\X} \xrightarrow{\lhom_\A(-,g)|_{\underline\X}} \lhom_\A(-,\syz^nX_0')|_{\underline\X} \to G \to 0
\end{align*}
with $X_0',X_1'\in\X$ and $g\in\Hom_\A(\syz^nX_1',\syz^nX_0')$.
As $\syz^nX_i$ and $\syz^nX_i'$ are in $\Y$ for $i=0,1$, the objects $\lhom_\A(-,\syz^nX_i)|_{\underline\Y}$ and $\lhom_\A(-,\syz^nX_i')|_{\underline\Y}$ are projective in $\mod\underline\Y$.
Hence there is a commutative diagram
$$
\begin{CD}
\lhom_\A(-,\syz^nX_1)|_{\underline\Y} @>{\lhom_\A(-,f)|_{\underline\Y}}>> \lhom_\A(-,\syz^nX_0)|_{\underline\Y} @>>> F|_{\underline\Y} @>>> 0\phantom{,}\\
@V{\lhom_\A(-,h_1)|_{\underline\Y}}VV @V{\lhom_\A(-,h_0)|_{\underline\Y}}VV @V{\rho}VV \\
\lhom_\A(-,\syz^nX_1')|_{\underline\Y} @>{\lhom_\A(-,g)|_{\underline\Y}}>> \lhom_\A(-,\syz^nX_0')|_{\underline\Y} @>>> G|_{\underline\Y} @>>> 0,
\end{CD}
$$
where $h_i:\syz^nX_i\to\syz^nX_i'$ is a morphism in $\A$ for $i=0,1$.
Substituting $\syz^nX_1$ for ``$-$'', we observe that $\underline{h_0f}=\underline{gh_1}$.
Hence the square in the diagram below commutes.
$$
\begin{CD}
\lhom_\A(-,\syz^nX_1)|_{\underline\X} @>{\lhom_\A(-,f)|_{\underline\X}}>> \lhom_\A(-,\syz^nX_0)|_{\underline\X} @>>> F @>>> 0\phantom{.}\\
@V{\lhom_\A(-,h_1)|_{\underline\X}}VV @V{\lhom_\A(-,h_0)|_{\underline\X}}VV \\
\lhom_\A(-,\syz^nX_1')|_{\underline\X} @>{\lhom_\A(-,g)|_{\underline\X}}>> \lhom_\A(-,\syz^nX_0')|_{\underline\X} @>>> G @>>> 0.
\end{CD}
$$
Let $\xi:F\to G$ be the induced morphism by this diagram.
Then $\xi$ is in $\Hom_{\mod\underline\X}(F,G)$ and we have $\phi(\underline\xi)=\underline{\xi|_{\underline\Y}}=\underline{\rho}$.
This shows that $\phi$ is a full functor.
\renewcommand{\qedsymbol}{$\square$}
\end{proof}

\begin{claim}\label{c4}
The functor $\phi$ is faithful.
\end{claim}

\begin{proof}[Proof of Claim]
Let $\xi:F\to G$ be a morphism in $\mod\underline\X$ with $F,G\in\syz^{3n}\mod\underline\X$.
Suppose that $\underline\xi$ is sent to $\underline0$ by the functor $\phi$.
Then $\underline{\xi|_{\underline\Y}}=\underline0$, which says that the morphism $\xi|_{\underline\Y}:F|_{\underline\Y}\to G|_{\underline\Y}$ factors through some $P\in\proj(\mod\underline\Y)$.
As in the proof of Claim \ref{c3}, we may assume that $F,G$ have projective presentations
\begin{align*}
&\lhom_\A(-,Y_1)|_{\underline\X} \xrightarrow{\lhom_\A(-,f)|_{\underline\X}} \lhom_\A(-,Y_0)|_{\underline\X} \to F \to 0,\\
&\lhom_\A(-,Y_1')|_{\underline\X} \xrightarrow{\lhom_\A(-,g)|_{\underline\X}} \lhom_\A(-,Y_0')|_{\underline\X} \to G \to 0
\end{align*}
with $Y_i,Y_i'\in\Y$ and $f\in\Hom_\A(Y_1,\Y_0),\,g\in\Hom_\A(Y_1',Y_0')$, and there is a commutative diagram
$$
\begin{CD}
\lhom_\A(-,Y_1)|_{\underline\X} @>{\lhom_\A(-,f)|_{\underline\X}}>> \lhom_\A(-,Y_0)|_{\underline\X} @>>> F @>>> 0\phantom{,}\\
@V{\lhom_\A(-,h_1)|_{\underline\X}}VV @V{\lhom_\A(-,h_0)|_{\underline\X}}VV @V{\xi}VV\\
\lhom_\A(-,Y_1')|_{\underline\X} @>{\lhom_\A(-,g)|_{\underline\X}}>> \lhom_\A(-,Y_0')|_{\underline\X} @>>> G @>>> 0,
\end{CD}
$$
where $h_i\in\Hom_\A(Y_i,Y_i')$ for $i=0,1$.
We also have an equality $\underline{h_0f}=\underline{gh_1}$.
Sending the above diagram by the functor $\phi$, we obtain the following diagram in $\mod\underline\Y$:
$$
\xymatrix@C45pt{
\lhom_\A(-,Y_1)|_{\underline\Y}\ar[rr]^{\lhom_\A(-,f)|_{\underline\Y}}\ar[dd]_{\lhom_\A(-,h_1)|_{\underline\Y}} & & \lhom_\A(-,Y_0)|_{\underline\Y}\ar[rr]^(0.6)\gamma\ar[dd]_{\lhom_\A(-,h_0)|_{\underline\Y}}\ar@{-->}[lldd]_{\lhom_\A(-,\underline\ell)|_{\underline\Y}\qquad} & & F|_{\underline\Y}\ar[r]\ar[dd]_{\xi|_{\underline\Y}}\ar[ld]_(0.55)\alpha & 0\\
& & & P\ar@{-->}[ld]_(0.4)\varepsilon\ar[rd]^(0.4)\beta\\
\lhom_\A(-,Y_1')|_{\underline\Y}\ar[rr]^{\lhom_\A(-,g)|_{\underline\Y}} & & \lhom_\A(-,Y_0')|_{\underline\Y}\ar[rr]^(0.6)\delta & & G|_{\underline\Y}\ar[r] & 0
}
$$
Since $P$ is a projective object of $\mod\underline\Y$, the morphism $\beta$ factors through $\delta$; we find a morphism $\varepsilon:P\to\lhom_\A(-,Y_0')|_{\underline\Y}$ with $\beta=\delta\varepsilon$.
Hence there exists a morphism $\lhom_\A(-,\underline\ell)|_{\underline\Y}:\lhom_\A(-,Y_0)|_{\underline\Y}\to\lhom_\A(-,Y_1')|_{\underline\Y}$ with $\ell\in\Hom_\A(Y_0,Y_1')$ such that
$$
\lhom_\A(-,h_0)|_{\underline\Y}=\lhom_\A(-,g)|_{\underline\Y}\cdot\lhom_\A(-,\ell)|_{\underline\Y}+\varepsilon\alpha\gamma.
$$
Composition with $\lhom_\A(-,f)|_{\underline\Y}$ shows that $\lhom_\A(-,h_0f)|_{\underline\Y}=\lhom_\A(-,g\ell f)|_{\underline\Y}$, which implies $\underline{h_0f}=\underline{g\ell f}$.
The left triangle in the diagram
$$
\xymatrix@C45pt{
\lhom_\A(-,Y_1)|_{\underline\X}\ar[rr]^{\lhom_\A(-,f)|_{\underline\X}}\ar[rrdd]_(0.4)0 & & \lhom_\A(-,Y_0)|_{\underline\X}\ar[rr]\ar[dd]_{\lhom_\A(-,h_0-g\ell)|_{\underline\X}} & & F\ar[r]\ar[dd]_{\xi}\ar@{-->}[lldd]^\zeta & 0\\
 \\
\lhom_\A(-,Y_1')|_{\underline\X}\ar[rr]^{\lhom_\A(-,g)|_{\underline\X}} & & \lhom_\A(-,Y_0')|_{\underline\X}\ar[rr] & & G\ar[r] & 0
}
$$
commutes, which implies that there exists a morphism $\zeta:F\to\lhom_\A(-,Y_0')|_{\underline\X}$ such that the right upper triangle commutes, and so does the right lower one.
It follows that the morphism $\xi$ factors through the projective object $\lhom_\A(-,Y_0')|_{\underline\X}$ of $\mod\underline\X$, and we have $\underline\xi=\underline0$.
The functor $\phi$ is thus faithful.
\renewcommand{\qedsymbol}{$\square$}
\end{proof}

Combining Claims \ref{c2}, \ref{c3} and \ref{c4} implies that the triangle functor $\phi$ is an equivalence, and the proof of the theorem is completed.
\end{proof}

\begin{rem}\label{oi}
The assumption of Theorem \ref{main1} does not necessarily imply that there is an equivalence
$$
\db(\mod\underline\X)\cong\db(\mod\underline\Y).
$$
Indeed, let $R$ be a regular local ring of positive Krull dimension $d$.
Set $n=d$ and $\A=\X=\mod R$.
Then $\gp\A=\cm(R)$ consists of free $R$-modules, and the assumption of Theorem \ref{main1} is satisfied.
There are embeddings $\underline\X\hookrightarrow\mod\underline\X\hookrightarrow\db(\mod\underline\X)$, and since $\underline\X\ne\zero$, we have $\db(\mod\underline\X)\ne\zero$.
On the other hand, since $\underline\Y=\zero$, we have $\db(\mod\underline\Y)=\zero$.
Hence $\db(\mod\underline\X)\ncong\db(\mod\underline\Y)$.
\end{rem}

The remainder of this section is devoted to stating immediate applications of our Theorem \ref{main1}.
First we apply the theorem to a quasi-resolving subcategory containing the Gorenstein projective objects.

\begin{cor}\label{containgp}
\begin{enumerate}[\rm(1)]
\item
Let $\X$ be a quasi-resolving subcategory of $\A$ with $\syz^n\X\subseteq\gp\A\subseteq\X$ for some $n\ge0$.
Then $\underline\X$ is singularly equivalent to $\lgp\A$.
\item
One has $\underline{\Gpd_n(\A)}$ is singularly equivalent to $\lgp\A$ for each $n\ge0$.
In particular, $\underline{\Gpd_i(\A)}$ and $\underline{\Gpd_j(\A)}$ are singularly equivalent for all $i,j\ge0$.
\end{enumerate}
\end{cor}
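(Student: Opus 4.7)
The plan is to deduce both parts of the corollary as direct applications of Theorem \ref{main1}.

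For part (1), the first step is to verify that $\X$ satisfies the condition $(\g_n)$. The inclusion $\syz^n\X\subseteq\gp\A$ is given by hypothesis, so the remaining task is to show that $\syz^n\X$ is closed under cosyzygies. Pick $M\in\syz^n\X$. Since $M\in\gp\A$, the object $M$ admits arbitrary cosyzygies, all of which lie in $\gp\A$; in particular $\syz^{-n-1}M\in\gp\A\subseteq\X$ by the standing hypothesis $\gp\A\subseteq\X$. Then $\syz^{-1}M$ is isomorphic (up to projective summands) to $\syz^n(\syz^{-n-1}M)$, hence lies in $\syz^n\X$. Thus $(\g_n)$ holds.

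Now I invoke Theorem \ref{main1}: setting $\Y=\syz^n\X=\X\cap\gp\A$, the stable categories $\underline\X$ and $\underline\Y$ are singularly equivalent. Since $\gp\A\subseteq\X$ by hypothesis, we in fact have $\Y=\X\cap\gp\A=\gp\A$, so $\underline\Y=\lgp\A$. This proves (1).

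For part (2), the plan is to apply (1) to $\X=\Gpd_n(\A)$. By Corollary \ref{3.14}(1) the category $\Gpd_n(\A)$ is resolving (in particular quasi-resolving) and satisfies $\syz^n(\Gpd_n(\A))=\gp\A$. Moreover $\gp\A\subseteq\Gpd_n(\A)$ since any Gorenstein projective object trivially has Gorenstein projective dimension $0\le n$. Hence the hypothesis $\syz^n\X\subseteq\gp\A\subseteq\X$ of (1) is fulfilled, and we conclude that $\underline{\Gpd_n(\A)}$ is singularly equivalent to $\lgp\A$. The final ``in particular'' assertion then follows by transitivity of triangle equivalence applied to $\underline{\Gpd_i(\A)}\simeq\lgp\A\simeq\underline{\Gpd_j(\A)}$ at the level of singularity categories.

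There is no serious obstacle here; the only point that requires a moment of care is the verification of closure under cosyzygies, which uses precisely the combination of the two inclusions $\syz^n\X\subseteq\gp\A$ and $\gp\A\subseteq\X$ to re-express $\syz^{-1}M$ as an $n$th syzygy of an object of $\X$.
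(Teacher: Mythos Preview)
Your proof is correct and follows the same approach as the paper. The only cosmetic difference is that the paper invokes Proposition \ref{3.5} (taking $\Y=\gp\A$ in condition (5)) to verify $(\g_n)$, whereas you unpack this directly by writing $\syz^{-1}M\cong\syz^n(\syz^{-n-1}M)$; both arguments then apply Theorem \ref{main1} and observe $\X\cap\gp\A=\gp\A$.
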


\begin{proof}
(1) Since $\gp\A$ is closed under cosyzygies, Proposition \ref{3.5} guarantees that $\X$ satisfies $(\g_n)$.
Theorem \ref{main1}(3) implies the assertion.

(2) The assertion follows by applying (1) to $\X=\Gpd_n(\A)$, or by directly combining Corollary \ref{3.14}(2) with Theorem \ref{main1}(3).
\end{proof}

Applying the above result to the module category of a Gorenstein ring, one observes that all resolving subcategories containing the MCM modules are singularly equivalent:

\begin{cor}\label{containcm}
Let $R$ be a Gorenstein local ring.
Let $\X$ be a quasi-resolving subcategory of $\mod R$ containing $\cm(R)$.
Then $\underline\X$ and $\lcm(R)$ are singularly equivalent.
\end{cor}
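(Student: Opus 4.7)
The plan is to deduce this corollary directly from Corollary \ref{containgp}(1) by checking that the two key syzygy/inclusion conditions hold when we take $\A = \mod R$ and identify $\gp\A$ with $\cm(R)$.

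First I would recall the two structural facts that make Gorenstein rings amenable here. Since $R$ is a Gorenstein local ring, one has the identification
\[
\gp(\mod R) = \cm(R),
\]
so the Gorenstein projective objects of the ambient abelian category $\mod R$ are precisely the MCM modules. Secondly, letting $d = \dim R$, a standard depth-lemma argument shows that for every finitely generated $R$-module $M$, the $d$-th syzygy $\syz^d M$ is MCM; hence $\syz^d(\mod R) \subseteq \cm(R)$.

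Next I would verify the hypotheses of Corollary \ref{containgp}(1) for $\A = \mod R$, $n = d$, and the given $\X$. On the one hand, $\X$ is quasi-resolving by assumption and contains $\cm(R) = \gp(\mod R)$, giving the inclusion $\gp\A \subseteq \X$. On the other hand, since $\X \subseteq \mod R$, the global syzygy fact yields
\[
\syz^d \X \subseteq \syz^d(\mod R) \subseteq \cm(R) = \gp\A,
\]
so the chain $\syz^n \X \subseteq \gp\A \subseteq \X$ required by Corollary \ref{containgp}(1) is satisfied with $n = d$.

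Applying Corollary \ref{containgp}(1) then gives a triangle equivalence $\ds(\underline\X) \cong \ds(\lgp(\mod R)) = \ds(\lcm(R))$, which is exactly the asserted singular equivalence between $\underline\X$ and $\lcm(R)$. There is essentially no obstacle beyond invoking the two well-known facts about Gorenstein rings above; once those are in hand, the statement is an immediate specialization of the previous corollary.
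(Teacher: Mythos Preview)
Your proof is correct and follows essentially the same approach as the paper: set $n=d=\dim R$, use that $\gp(\mod R)=\cm(R)$ and $\syz^d\X\subseteq\cm(R)$ to verify the chain $\syz^n\X\subseteq\gp\A\subseteq\X$, then invoke Corollary~\ref{containgp}(1). The paper's proof is just a more compressed version of what you wrote.
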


\begin{proof}
Putting $d=\dim R$, we have $\syz^d\X\subseteq\cm(R)\subseteq\X$.
Since $\cm(R)=\gp(\mod R)$, the assertion follows from Corollary \ref{containgp}(1).
\end{proof}

Here we give an example of a singular equivalence which is analogous to Corollary \ref{containcm}.

\begin{ex}
Let $R$ be a $d$-dimensional Gorenstein local ring.
Let $\mod_0(R)$ stand for the category of $R$-modules that are locally free on the punctured spectrum of $R$, and set $\cm_0(R)=\cm(R)\cap\mod_0(R)$.
Let $\X$ be a quasi-resolving subcategory of $\mod R$ contained in $\mod_0(R)$ and containing $\cm_0(R)$.
Then one has $\syz^d\X\subseteq\cm_0(R)\subseteq\X$ and $\cm_0(R)$ is closed under cosyzygies.
Proposition \ref{3.5} implies that $\X$ satisfies $(\g_n)$, and therefore $\underline\X$ is singularly equivalent to $\lcm_0(R)$ by Theorem \ref{main1}.
\end{ex}

\begin{rem}\label{krause}
In general, the existence of singular equivalences between $\underline\X$ and $\underline\Y$ for quasi-resolving subcategories $\X,\Y$ does not imply that there is an inclusion relation between $\X$ and $\Y$, as is shown by the example below.

Let $R$ be a Gorenstein local integral domain of Krull dimension at least $2$ with unique maximal ideal $\m$.
Let $\X$ (respectively, $\Y$) be the subcategory of $\mod R$ consisting of modules $M$ satisfying $\m\notin\ass_RM$ (respectively, $\ass_RM\subseteq\{0,\m\}$), where $\ass_RM$ stands for the set of associated prime ideals of $R$.
Then it is easy to observe that $\X$ and $\Y$ are resolving subcategories of $\mod R$ containing $\cm(R)$.
In fact, in the bijection constructed in \cite[Corollary 8.9]{Sa} the resolving subcategory $\X$ (respectively, $\Y$) corresponds to the {\em grade-consistent function} $f$ (respectively, $g$) defined as follows.
$$
f(\p)=
\begin{cases}
\height\p & \text{if }\p\in\spec R\setminus\{\m\},\\
\height\p-1 & \text{if }\p=\m,
\end{cases}
\qquad
g(\p)=
\begin{cases}
\height\p & \text{if }\p\in\{0,\m\},\\
\height\p-1 & \text{if }\p\in\spec R\setminus\{0,\m\}.
\end{cases}
$$
It follows from Corollary \ref{containcm} that $\underline\X$ and $\underline\Y$ are singularly equivalent; they are also singularly equivalent to $\lcm(R)$.
On the other hand, the module $R/\m$ is in $\Y$ but not in $\X$, and the module $R/\p$ is in $\X$ but not in $\Y$ for any prime ideal $\p$ of $R$ different from $0$ and $\m$ (such a prime ideal exists since $R$ has Krull dimension at least $2$).
Therefore, $\X$ and $\Y$ have no inclusion relation.
\end{rem}

From now on, we apply our theorem to complete intersection local rings.

\begin{cor}\label{sgci}
Let $R$ be a local ring.
Let $\X$ be a subcategory of $\mod R$.
\begin{enumerate}[\rm(1)]
\item
Assume that $\X$ is resolving and that all modules in $\X$ have CI-dimension at most $n$.
Then $\syz^n\X=\X\cap\gp(\mod R)$ holds, and $\syz^n\X$ is resolving.
One has that $\underline\X$ is singularly equivalent to $\underline{\syz^n\X}$.
\item
If $R$ is a complete intersection, then $\underline{\res\syz^i\X}$ and $\underline{\res\syz^j\X}$ are singularly equivalent for all $i,j\ge0$.
\end{enumerate}
\end{cor}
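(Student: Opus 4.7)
The plan for part (1) is to invoke the main structural result of Section \ref{sect5} directly. Corollary \ref{cia}(1) converts the CI-dimension hypothesis into the condition $(\g_n)$ for $\X$, so Theorem \ref{main1}(1) yields the equality $\syz^n\X=\X\cap\gp(\mod R)$ and Theorem \ref{main1}(3) gives the singular equivalence between $\underline\X$ and $\underline{\syz^n\X}$. That $\syz^n\X=\X\cap\gp(\mod R)$ is resolving follows from its being the intersection of two resolving subcategories of $\mod R$ (use Remark \ref{3.4}(3) for $\gp(\mod R)$), together with the extra closure under extensions recorded in Proposition \ref{ho}.

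For part (2), set $d=\dim R$. Since $R$ is a complete intersection of dimension $d$, every $R$-module has CI-dimension at most $d$, so part (1) applies to each resolving subcategory with $n=d$. Applied to $\res\syz^i\X$, this produces a singular equivalence between $\underline{\res\syz^i\X}$ and $\underline{(\res\syz^i\X)\cap\cm(R)}$ for every $i\ge 0$. Transitivity then reduces the theorem to proving the identity
\[
(\res\syz^i\X)\cap\cm(R)\ =\ (\res\X)\cap\cm(R)\qquad(i\ge 0).
\]

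For this identity, the inclusion $\subseteq$ is immediate: $\res\X$ is resolving and hence closed under syzygies, so $\syz^i\X\subseteq\res\X$ and therefore $\res\syz^i\X\subseteq\res\X$. For the reverse inclusion I would combine two ingredients. The first is the auxiliary lemma $\syz^i(\res\X)\subseteq\res\syz^i\X$, which I would prove by showing that the subcategory
\[
\W\ =\ \{\,N\in\mod R\mid\syz^i N\in\res\syz^i\X\,\}
\]
is resolving and contains $\X$. Containment of $\X$ and closure under projectives, direct summands, and extensions are immediate; for closure under kernels of epimorphisms, iterating Proposition \ref{longex}(1) produces, from any short exact sequence $0\to L\to M\to N\to 0$, a short exact sequence $0\to\syz^iL\to\syz^iM\oplus P\to\syz^iN\to 0$ with $P\in\proj(\mod R)$, and the kernel-of-epimorphism closure of $\res\syz^i\X$ then delivers the conclusion. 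The second ingredient is that $(\res\X)\cap\cm(R)$ is closed under cosyzygies: since $\res\X$ satisfies $(\g_d)$ by Corollary \ref{cia}(2), Proposition \ref{ho} identifies this category with $\syz^d(\res\X)$ and asserts its cosyzygy closure. With both ingredients in hand, any $M\in(\res\X)\cap\cm(R)$ satisfies $\syz^{-i}M\in(\res\X)\cap\cm(R)\subseteq\res\X$, whence $M\cong\syz^i(\syz^{-i}M)\in\syz^i(\res\X)\subseteq\res\syz^i\X$, completing the identity.

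The main obstacle is the auxiliary lemma $\syz^i(\res\X)\subseteq\res\syz^i\X$, that is, the verification that $\W$ is a resolving subcategory; once this is done, the rest of part (2) is a formal application of Theorem \ref{main1}(3) combined with transitivity of singular equivalence.
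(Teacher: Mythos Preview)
Your proof is correct and follows essentially the same route as the paper. Part (1) is identical. For part (2), both you and the paper reduce to the same auxiliary lemma $\syz^i(\res\C)\subseteq\res\syz^i\C$ (proved identically via the subcategory $\W$), and both establish the same key identity, phrased by the paper as $\syz^d(\res\syz^i\X)=\res\syz^d\X$ and by you as $(\res\syz^i\X)\cap\cm(R)=(\res\X)\cap\cm(R)$; these are the same statement once part (1) is applied. The only genuine difference is in the reverse inclusion: the paper invokes the external reference \cite[Corollary 4.16]{radius} to obtain $\syz^{-i}\syz^d\X\subseteq\res\syz^d\X$, whereas you obtain the needed cosyzygy closure of $(\res\X)\cap\cm(R)$ directly from Corollary \ref{cia}(2) and Proposition \ref{ho}, keeping the argument internal to the paper. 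One minor remark: your claim that closure of $\W$ under extensions is ``immediate'' in fact uses the same horseshoe-type short exact sequence $0\to\syz^iL\to\syz^iM\oplus P\to\syz^iN\to0$ that you spell out for kernels of epimorphisms, together with extension and direct-summand closure of $\res\syz^i\X$.
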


\begin{proof}
(1) The category $\gp(\mod R)$ consists of totally reflexive $R$-modules, which is a resolving subcategory of $\mod R$.
Hence $\X\cap\gp(\mod R)$ is also a resolving subcategory of $\mod R$.
Corollary \ref{cia}(1) and Theorem \ref{main1} yield $\syz^n\X=\X\cap\gp(\mod R)$, and its stable category is singularly equivalent to $\underline\X$.

(2) It suffices to prove that $\underline{\res\syz^i\X}$ is singularly equivalent to $\underline{\res\syz^d\X}$ for all $i\ge0$, where $d:=\dim R$.
By \cite[Theorems (1.3) and (1.4)]{AGP} and (1) we observe that $\syz^d(\res\syz^i\X)$ is resolving, and $\underline{\res\syz^i\X}$ is singularly equivalent to $\underline{\syz^d(\res\syz^i\X)}$.
It is enough to verify that
\begin{equation}\label{571}
\syz^d(\res\syz^i\X)=\res\syz^d\X.
\end{equation}
For this, we establish a claim.
\begin{claim*}
For any subcategory $\C$ of $\mod R$ and any integer $n\ge0$ one has the inclusion
$$
\syz^n(\res\C)\subseteq\res\syz^n\C.
$$
\end{claim*}
\begin{proof}[Proof of Claim]
Let $\M$ be the subcategory of $\mod R$ consisting of modules whose $n$th syzygies are in $\res\syz^n\C$.
Then it is easily seen that $\M$ is a resolving subcategory containing $\C$.
Hence $\M$ contains $\res\C$, which deduces the claim.
\renewcommand{\qedsymbol}{$\square$}
\end{proof}
Let us prove the equality \eqref{571}.
Since $\syz^i\X$ is contained in $\res\X$, so is $\res\syz^i\X$.
Hence $\syz^d(\res\syz^i\X)$ is contained in $\syz^d(\res\X)$, which is contained in $\res\syz^d\X$ by the claim.
On the other hand, since $\syz^d\X$ is contained in $\cm(R)$, it is observed by \cite[Corollary 4.16]{radius} that $\syz^{-i}\syz^d\X$ is contained in $\res\syz^d\X$.
Hence $\syz^d\X=\syz^i(\syz^{-i}\syz^d\X)$ is contained in $\syz^i(\res\syz^d\X)$, which is contained in $\res\syz^{d+i}\X$ by the claim.
Thus $\res\syz^d\X$ is contained in $\res\syz^{d+i}\X=\res\syz^d(\syz^i\X)$, which is contained in $\res\syz^d(\res\syz^i\X)=\syz^d(\res\syz^i\X)$.
This implies that $\syz^d(\res\syz^i\X)$ contains $\res\syz^d\X$.
\end{proof}

\begin{rem}
It is not known that finiteness of CI-dimension is preserved by taking extensions; see \cite[Remark 5.1]{NS} for example.
If this turns out to be true, then the second assertion of Corollary \ref{sgci} will extend to arbitrary local rings $R$ and subcategories $\X$ all of whose objects have finite CI-dimension, because all modules in $\res\syz^i\X$ will have finite CI-dimension.
(The same proof will work by some appropriate replacement; $d=\dim R$ should be replaced with $\depth R$, and so on.)
\end{rem}

The following result immediately follows from Corollary \ref{sgci}(1).
This is regarded as a stronger version of Corollary \ref{containcm} for resolving subcategories.
Thanks to this result, to classify singular equivalence classes over a complete intersection, one has only to consider resolving subcategories consisting of MCM modules.

\begin{cor}\label{xxcm}
Let $R$ be a local complete intersection.
Let $\X$ be a resolving subcategory of $\mod R$.
Then $\underline\X$ is singularly equivalent to $\underline{\X\cap\cm(R)}$.
\end{cor}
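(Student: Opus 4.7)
The plan is to deduce this corollary as a direct specialization of Corollary \ref{sgci}(1). Let $d=\dim R$. Since $R$ is a complete intersection, the combination of \cite[Theorems (1.3) and (1.4)]{AGP} guarantees that every finitely generated $R$-module has CI-dimension at most $d$; in particular, every module in the resolving subcategory $\X$ has CI-dimension $\le d$.

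First I would invoke Corollary \ref{sgci}(1) with $n=d$ to obtain the equality
$$
\syz^d\X=\X\cap\gp(\mod R)
$$
together with a singular equivalence between $\underline\X$ and $\underline{\syz^d\X}$. The hypotheses needed are precisely that $\X$ is resolving and that its objects have uniformly bounded CI-dimension, which we just verified.

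Next I would identify the right-hand side with $\X\cap\cm(R)$. Here I use that a complete intersection is Gorenstein, so by the standard identification (cf.\ Remark \ref{3.4}(1) together with the fact that over a Gorenstein ring totally reflexive modules are exactly maximal Cohen-Macaulay modules) we have $\gp(\mod R)=\cm(R)$. Hence $\syz^d\X=\X\cap\cm(R)$, and the singular equivalence from the previous step becomes the desired equivalence between $\underline\X$ and $\underline{\X\cap\cm(R)}$.

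There is no genuine obstacle here: all the substantial work has already been done in Corollary \ref{sgci}(1) (which itself relied on Corollary \ref{cia} and Theorem \ref{main1}). The only point to be careful about is simply recording that the two identifications $\syz^d\X=\X\cap\gp(\mod R)$ and $\gp(\mod R)=\cm(R)$ combine cleanly, so that the conclusion of Corollary \ref{sgci}(1) can be restated in the Cohen-Macaulay language demanded by Corollary \ref{xxcm}.
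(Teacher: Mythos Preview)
Your proof is correct and follows exactly the approach the paper intends: the paper simply states that the corollary ``immediately follows from Corollary \ref{sgci}(1)'', and you have spelled out precisely the two ingredients needed, namely that every $R$-module has CI-dimension at most $d=\dim R$ (via \cite[Theorems (1.3) and (1.4)]{AGP}) and that $\gp(\mod R)=\cm(R)$ because a complete intersection is Gorenstein.
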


Let $R$ be a local ring with maximal ideal $\m$.
Recall that $R$ is called a {\em hypersurface} if the $\m$-adic completion of $R$ is isomorphic to the quotient ring of a regular local ring by a principal ideal.
Needless to say, any hypersurface is a complete intersection.
Over a hypersurface with an isolated singularity, there are at most two singular equivalence classes.

\begin{cor}\label{either}
Let $R$ be a local hypersurface with an isolated singularity.
Let $\X$ be a resolving subcategory of $\mod R$.
Then $\underline\X$ is singularly equivalent to either $\lcm(R)$ or $\zero$.
\end{cor}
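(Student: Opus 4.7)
The plan is to combine Corollary \ref{xxcm} with the classification of resolving subcategories of $\cm(R)$ from \cite{stcm}.

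First, since $R$ is a hypersurface it is in particular a complete intersection, so Corollary \ref{xxcm} applies: $\underline\X$ is singularly equivalent to $\underline{\X\cap\cm(R)}$. Thus we may replace $\X$ by $\X\cap\cm(R)$ and assume from the outset that $\X$ is a resolving subcategory of $\mod R$ contained in $\cm(R)$.

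Next, I would invoke the classification theorem from \cite{stcm} for the isolated hypersurface singularity case. Under the hypothesis that $R$ is a local hypersurface with an isolated singularity, the assignment $\X\mapsto\underline\X$ establishes a bijection between resolving subcategories of $\mod R$ contained in $\cm(R)$ and thick subcategories of the triangulated category $\lcm(R)$; moreover, the thick subcategories of $\lcm(R)$ reduce to only two, namely $\zero$ and $\lcm(R)$ itself, since the singular locus of $R$ consists of a single point $\{\m\}$. Consequently, either $\underline\X=\zero$ (equivalently $\X\subseteq\add R$) or $\underline\X=\lcm(R)$ (equivalently $\X=\cm(R)$).

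Finally, I would treat the two cases separately. If $\underline\X=\zero$, then $\mod\underline\X=\zero$, so $\ds(\underline\X)=\zero$. If $\underline\X=\lcm(R)$, then $\ds(\underline\X)=\ds(\lcm(R))$. In either case $\underline\X$ is singularly equivalent to $\lcm(R)$ or to $\zero$, which proves the corollary.

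The main obstacle is not in the internal logic but in correctly citing and invoking the classification from \cite{stcm}: one must verify that the statement there really does reduce, in the isolated hypersurface case, to the dichotomy $\X\cap\cm(R)\in\{\add R,\cm(R)\}$, and that this uses only the isolated singularity hypothesis (to collapse all specialization-closed subsets of the singular locus to two) together with the hypersurface hypothesis (so that $\cm(R)=\gp(\mod R)$ and the thick-subcategory/resolving-subcategory correspondence applies cleanly). Once this input is in hand, the remaining argument is a one-line application of Corollary \ref{xxcm}.
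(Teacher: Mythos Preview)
Your proposal is correct and follows essentially the same route as the paper: reduce via Corollary \ref{xxcm} to $\X\cap\cm(R)$, then invoke the classification from \cite{stcm} to obtain the dichotomy $\X\cap\cm(R)\in\{\proj(\mod R),\cm(R)\}$. The only cosmetic difference is that the paper cites the dichotomy directly (as \cite[Corollary 6.9(1)]{stcm}) rather than passing through the bijection with thick subcategories of $\lcm(R)$ and the singular-locus description, but this is exactly the content behind that corollary.
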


\begin{proof}
It follows from Corollary \ref{xxcm} that $\underline\X$ is singularly equivalent to $\underline{\X\cap\cm(R)}$.
As $\X\cap\cm(R)$ is a resolving subcategory of $\mod R$ contained in $\cm(R)$, it is equal to either $\proj(\mod R)$ or $\cm(R)$ by \cite[Corollary 6.9(1)]{stcm}.
Hence $\underline{\X\cap\cm(R)}$ coincides with either $\zero$ or $\lcm(R)$.
\end{proof}

\section{Regularity for resolving subcategories and simple singularities}\label{sect6}

We have learned in Corollary \ref{either} that over an isolated hypersurface singularity the stable category $\underline\X$ of a resolving subcategory $\X$ is singularly equivalent to either $\lcm(R)$ or $\zero$.
So it is natural to ask when $\underline\X$ is singularly equivalent to $\zero$.
The main purpose of this section is to give an answer to this question.

The {\em global dimension} (respectively, {\em finitistic dimension}) of $\A$ is defined to be the supremum of the projective dimensions (respectively, finite projective dimensions) of objects of $\A$.
Let us begin with investigating the relationships of the condition that $\underline\X$ is singularly equivalent to $\zero$ with several other conditions, including finiteness of the global dimension of $\mod\underline\X$.

\begin{prop}\label{sgzfgl}
Let $\X$ be a quasi-resolving subcategory of $\A$.
Consider the following four conditions.
\begin{enumerate}[\rm(1)]
\item
$\mod\underline\X$ has finite global dimension.
\item
$\underline\X$ is singularly equivalent to $\zero$.
\item
$\underline\X$ is regular.
\item
Every object of $\X$ has finite projective dimension in $\A$.
\end{enumerate}
Then the implications $(1)\Rightarrow(2)\Leftrightarrow(3)\Leftarrow(4)$ hold.
The implication $(1)\Leftarrow(2)$ holds if $\X$ satisfies the condition $(\g_n)$.
The implication $(1)\Leftarrow(4)$ holds if the finitistic dimension of $\A$ is finite.
\end{prop}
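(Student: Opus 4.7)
The plan is to verify the implications in turn, noting that all the work has essentially been done by earlier results. First, the equivalence $(2)\Leftrightarrow(3)$ is tautological from the definitions: by definition, $\underline\X$ is regular iff every object of $\mod\underline\X$ has finite projective dimension, and this is precisely the condition for the natural functor $\kb(\proj(\mod\underline\X))\to\db(\mod\underline\X)$ to be essentially surjective, i.e., $\ds(\underline\X)=\zero$. Since $\ds(\zero)=\zero$, this is the same as being singularly equivalent to $\zero$. The implication $(1)\Rightarrow(2)$ is then immediate: if $\mod\underline\X$ has finite global dimension, every object has finite projective dimension, so $\ds(\underline\X)=\zero$.

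For $(4)\Rightarrow(3)$, I would fix $F\in\mod\underline\X$ and invoke Proposition \ref{3.3}(2) to produce a short exact sequence $0\to A\to B\to C\to0$ in $\A$ with $A,B,C\in\X$ whose associated zig-zag sequence yields a projective resolution of $F$ in $\mod\underline\X$. By hypothesis $A,B,C$ each have finite projective dimension in $\A$; setting $n$ to be their maximum, the syzygies $\syz^nA,\syz^nB,\syz^nC$ all lie in $\proj\A$, so the stable hom functors $\lhom_\A(-,\syz^nA)|_{\underline\X}$, $\lhom_\A(-,\syz^nB)|_{\underline\X}$, $\lhom_\A(-,\syz^nC)|_{\underline\X}$ vanish (any map into a projective factors through it). Hence the resolution of $F$ terminates in at most $3n$ steps, showing $\pd F<\infty$.

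For the reverse $(1)\Leftarrow(4)$ under the assumption that $\A$ has finite finitistic dimension $n$, each object of $\X$ has, by (4), finite projective dimension, hence projective dimension at most $n$ in $\A$. Theorem \ref{AR} then yields that $\underline\X$ is regular of dimension at most $3n-1$, i.e., $\mod\underline\X$ has global dimension at most $3n-1$. For $(1)\Leftarrow(2)$ under the condition $(\g_n)$, Theorem \ref{main3} supplies two facts simultaneously: that $\underline\X$ is Gorenstein of dimension at most $3n$, and that $\ds(\underline\X)\cong\lgp(\mod\underline\X)$. The assumption $\ds(\underline\X)\cong\zero$ then forces $\lgp(\mod\underline\X)=\zero$, so every Gorenstein projective object of $\mod\underline\X$ is projective. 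Any $F\in\mod\underline\X$ therefore admits a Gorenstein projective resolution of length at most $3n$ whose terms are all projective, whence $\pd F\le3n$, and $\mod\underline\X$ has global dimension at most $3n$.

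There is no substantive obstacle here: each implication is a short application of an already proved result (Proposition \ref{3.3}, Theorem \ref{AR}, or Theorem \ref{main3}). The only point requiring a moment's care is $(4)\Rightarrow(3)$, where one must observe that finite projective dimensions of the three objects $A,B,C$ produced by Proposition \ref{3.3}(2), despite not being globally uniform over all $F$, still give finite projective dimension for each individual $F$, which is all that regularity demands.
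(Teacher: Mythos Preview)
Your proof is correct and follows essentially the same approach as the paper's. The only cosmetic differences are that the paper uses only the finite projective dimension of $C$ (not all of $A,B,C$) in $(4)\Rightarrow(3)$, reuses that explicit resolution with the uniform finitistic bound rather than citing Theorem~\ref{AR} for $(4)\Rightarrow(1)$, and spells out the thick-subcategory argument for $(3)\Rightarrow(2)$ that you describe as tautological.
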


\begin{proof}
First of all, note that the condition (2) is equivalent to the equality $\db(\mod\underline\X)=\kb(\proj(\mod\underline\X))$.

$(2)\Rightarrow(3)$:
It is straightforward to verify the implication.

$(3)\Rightarrow(2)$:
We observe that $\mod\underline\X$ is contained in $\kb(\proj(\mod\underline\X))$ as subcategories of $\db(\mod\underline\X)$.
Let $C=(0\to C^a\to C^{a+1}\to\cdots\to C^b\to0)$ be any object of $\db(\mod\underline\X)$.
Then $C$ belongs to the smallest thick subcategory $\T$ of $\db(\mod\underline\X)$ containing $M:=C^a\oplus C^{a+1}\oplus\cdots\oplus C^b\in\mod\underline\X$.
Since $\kb(\proj(\mod\underline\X))$ is a thick subcategory of $\db(\mod\underline\X)$ containing $M$, we see that $\kb(\proj(\mod\underline\X))$ contains $\T$.
Hence $C$ belongs to $\kb(\proj(\mod\underline\X))$.
Thus $\db(\mod\underline\X)$ coincides with $\kb(\proj(\mod\underline\X))$.

$(1)\Rightarrow(3)$:
The implication is trivial.

$(4)\Rightarrow(3)$:
Let $F$ be an object in $\mod\underline\X$.
Using Proposition \ref{3.3}(2), we have an exact sequence $0 \to A \to B \to C \to 0$ in $\A$ with $A,B,C\in\X$ which induces a projective resolution
$$
\cdots \to \lhom_\A(-,\syz C)|_{\underline\X} \to \lhom_\A(-,A)|_{\underline\X} \to \lhom_\A(-,B)|_{\underline\X} \to \lhom_\A(-,C)|_{\underline\X} \to F \to 0.
$$
Since $C$ has finite projective dimension, the $n$th syzygy $\syz^nC$ is projective for some $n\ge0$.
Hence the above resolution has length less than $3n$.

$(2)\Rightarrow(1)$:
Suppose that $\X$ satisfies $(\g_n)$.
Then Theorem \ref{main3} yields triangle equivalences
$$
\zero\cong\ds(\underline\X)\cong\lgp(\mod\underline\X)=\underline{\syz^{3n}(\mod\underline\X)}
$$
for some $n\ge0$.
From this we obtain $\syz^{3n}(\mod\underline\X)=\proj(\mod\underline\X)$, which implies that $\mod\underline\X$ has global dimension at most $3n$.

$(4)\Rightarrow(1)$:
Assume that $\A$ has finitistic dimension $m$.
With the notation of the proof of the implication $(4)\Rightarrow(3)$, the syzygy $\syz^mC$ is projective, whence the length of the resolution is less than $3m$.
\end{proof}

Thanks to Proposition \ref{sgzfgl}, now we know that the stable category of a quasi-resolving subcategory is singularly equivalent to $\zero$ if and only if it is regular.
Thus, in the rest of this section we consistently use the terminology of regularity instead of singular equivalence to $\zero$.

Applying Proposition \ref{sgzfgl} to categories of modules over rings, we have the following.

\begin{cor}
\begin{enumerate}[\rm(1)]
\item
Let $R$ have finite Krull dimension.
Let $\X$ be a resolving subcategory of $\mod R$ whose objects have finite projective dimension as $R$-modules.
Then $\mod\underline\X$ has finite global dimension.
\item
Let $R$ be a local complete intersection.
Let $\X$ be a resolving subcategory of $\mod R$.
Then $\underline\X$ is regular if and only if $\mod\underline\X$ has finite global dimension.
\end{enumerate}
\end{cor}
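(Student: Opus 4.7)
The plan is to reduce both parts of the corollary to the corresponding implications in Proposition \ref{sgzfgl}, so the work consists mainly of verifying that the relevant hypothesis of that proposition is available in each setting.

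For part (1), I want to apply the implication $(4)\Rightarrow(1)$ of Proposition \ref{sgzfgl} with $\A=\mod R$, which requires that $\mod R$ has finite finitistic dimension. Since $R$ is commutative noetherian of finite Krull dimension $d$, a classical bound (Auslander--Buchsbaum locally, combined with the fact that projective dimension can be checked after localization at primes) gives that every finitely generated $R$-module of finite projective dimension has projective dimension at most $d$. Hence the finitistic dimension of $\mod R$ is at most $d$. Since by hypothesis every object of $\X$ has finite projective dimension over $R$, condition (4) of Proposition \ref{sgzfgl} holds, and therefore $\mod\underline\X$ has finite global dimension (in fact at most $3d$, by inspecting the proof of $(4)\Rightarrow(1)$ in Proposition \ref{sgzfgl}).

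For part (2), the ``if'' direction is the implication $(1)\Rightarrow(3)$ of Proposition \ref{sgzfgl}, which is trivial and needs no extra hypothesis on $R$. For the ``only if'' direction I will use the implication $(3)\Rightarrow(2)\Rightarrow(1)$ of Proposition \ref{sgzfgl}, whose last step requires $\X$ to satisfy $(\g_n)$ for some $n\ge 0$. This is precisely supplied by Corollary \ref{cia}(2): since $R$ is a local complete intersection of dimension $d$, every resolving subcategory $\X$ of $\mod R$ satisfies $(\g_d)$. Combining the two implications shows that regularity of $\underline\X$ forces $\mod\underline\X$ to have finite global dimension, completing (2).

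There is no real obstacle here; the corollary is a direct assembly of Proposition \ref{sgzfgl} with the two external inputs (finiteness of the finitistic dimension of $\mod R$ when $\dim R<\infty$, and Corollary \ref{cia}(2) for complete intersections). The only point requiring a sentence of justification is the finiteness of the finitistic dimension of $\mod R$ in part (1), which is the ``commutative noetherian'' analogue of a standard bound and follows from Auslander--Buchsbaum applied at each prime.
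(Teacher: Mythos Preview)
Your proof is correct and follows essentially the same route as the paper: both parts are deduced directly from Proposition \ref{sgzfgl}, with part (1) using the bound on the finitistic dimension of $\mod R$ (the paper cites Raynaud--Gruson, you argue via Auslander--Buchsbaum at localizations) and part (2) invoking Corollary \ref{cia}(2) to supply the condition $(\g_n)$.
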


\begin{proof}
(1) The finitistic dimension of $\mod R$ is at most the Krull dimension of $R$ (see \cite[Seconde partie, Th\'{e}or\`{e}me (3.2.6)]{RG}), so it is finite.
Combining this with Proposition \ref{sgzfgl} shows the assertion.

(2) The assertion follows from Proposition \ref{sgzfgl} and Corollary \ref{cia}(2).
\end{proof}

The following lemma plays an essential role in the proof of the main result of this section.

\begin{lem}\label{midfree}
Let $R$ be a Gorenstein Henselian local ring.
Let $\X$ be a quasi-resolving subcategory of $\mod R$ contained in $\cm(R)$ and closed under cosyzygies.
Assume that there exists a nonsplit exact sequence
$$
\sigma:0 \to X \xrightarrow{f} Y \xrightarrow{g} Z \to 0
$$
of $R$-modules with $X,Y,Z\in\X$ such that $X,Z$ are indecomposable.
If $\underline\X$ is regular, then $Y$ is free, and $X$ is isomorphic to $\syz Z$.
\end{lem}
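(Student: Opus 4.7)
The plan is to exploit the hypothesis that $\underline\X$ is regular to reduce $\mod\underline\X$ to a semisimple abelian category, and then squeeze strong consequences out of the long exact sequence attached to $\sigma$ by Proposition \ref{longex}(2). Since $R$ is Gorenstein, $\cm(R)=\gp(\mod R)$, so $\X\subseteq\gp(\mod R)$ and closure under cosyzygies forces $\X$ to satisfy $(\g_0)$ by Proposition \ref{3.5}. Theorem \ref{main3} with $n=0$ then tells us that $\mod\underline\X$ is a Frobenius abelian category and that $\ds(\underline\X)\cong\underline{\mod\underline\X}$. Regularity of $\underline\X$ makes the left-hand side trivial, so every object of $\mod\underline\X$ is projective --- in other words, $\mod\underline\X$ is semisimple.

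Before invoking the long exact sequence I will eliminate degenerate cases. If $Z$ were free it would be projective in $\mod R$ and $\sigma$ would split; if $X$ were free, then $\Ext^1_R(Z,X)=0$ because $Z\in\cm(R)=\gp(\mod R)$ is totally reflexive, so again $\sigma$ would split. Both contradict the nonsplitness of $\sigma$, so $X$ and $Z$ are indecomposable and non-free. Since $R$ is Henselian, $\End_R(X)$ and $\End_R(Z)$ are local rings; the proper two-sided ideals $\P_R(X,X)$ and $\P_R(Z,Z)$ sit inside their Jacobson radicals, and the stable endomorphism rings $\lend_R(X),\lend_R(Z)$ are nonzero local rings. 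Consequently $\lhom_R(-,X)|_{\underline\X}$ and $\lhom_R(-,Z)|_{\underline\X}$ are indecomposable projective objects of $\mod\underline\X$.

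The core step applies Proposition \ref{longex}(2) to $\sigma$ and studies the cokernel $F:=\Cok(g_*)\in\mod\underline\X$ of the truncated presentation
$$
\lhom_R(-,X)|_{\underline\X}\xrightarrow{f_*}\lhom_R(-,Y)|_{\underline\X}\xrightarrow{g_*}\lhom_R(-,Z)|_{\underline\X}\to F\to 0.
$$
By semisimplicity $F$ is projective, so the sequence $0\to\Im g_*\to\lhom_R(-,Z)|_{\underline\X}\to F\to 0$ splits; indecomposability of $\lhom_R(-,Z)|_{\underline\X}$ leaves only $F=0$ or $F\cong\lhom_R(-,Z)|_{\underline\X}$. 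In the former case $g_*$ is a split epi and a Yoneda section produces $\tilde s\colon Z\to Y$ with $\underline{g\tilde s}=\underline{1_Z}$; since $\P_R(Z,Z)$ sits in the Jacobson radical of $\End_R(Z)$, the element $g\tilde s$ is a unit in $\End_R(Z)$ and so $\sigma$ splits --- a contradiction. Hence $g_*=0$, making $f_*$ surjective. An identical analysis using indecomposability of $\lhom_R(-,X)|_{\underline\X}$ gives either $\lhom_R(-,Y)|_{\underline\X}\cong\lhom_R(-,X)|_{\underline\X}$ via $f_*$, in which case $\underline f$ is invertible in $\underline\X$ and the same lifting argument produces a retraction of $f$ splitting $\sigma$, or $\lhom_R(-,Y)|_{\underline\X}=0$. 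The latter, evaluated at $Y$, forces $1_Y$ to factor through a free module, so $Y$ is a direct summand of a free module and therefore free.

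The main obstacle is the repeated lifting of stable-level equalities to genuine unit elements of $\End_R$, which hinges on the observation that for an indecomposable non-free module $M$ over a Henselian local ring $R$, the projective-factoring ideal $\P_R(M,M)$ is contained in the Jacobson radical of the local ring $\End_R(M)$. Once $Y$ has been shown free, the identification $X\cong\syz Z$ is immediate: any surjection from a finitely generated free module onto $Z$ is a right $\proj(\mod R)$-approximation (any map out of a projective lifts along such a surjection), so $X=\ker(Y\to Z)$ is a first syzygy of $Z$ in the sense of Definition \ref{25}.
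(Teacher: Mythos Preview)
Your proof is correct and follows essentially the same strategy as the paper: both arguments use Theorem \ref{main3} at $n=0$ together with regularity to force every object of $\mod\underline\X$ to be projective, then analyze the cokernel $F$ of $g_*$ via the indecomposability of $\lhom_R(-,Z)|_{\underline\X}$, rule out $F=0$ by the Henselian lifting argument, and conclude $g_*=0$. The one noteworthy difference is in the endgame: the paper, after obtaining that $f_*$ is surjective, passes to the stable category to get $Y$ as a stable summand of $X$, and then invokes \cite[Theorem 2.4]{M} to rule out the case $Y$ stably isomorphic to $X$; you instead reuse the semisimplicity/indecomposability argument directly on the split epimorphism $f_*$ with indecomposable source $\lhom_R(-,X)|_{\underline\X}$, which is slightly more self-contained and avoids the external citation.
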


\begin{proof}
The short exact sequence $\sigma$ induces an exact sequence
$$
\lhom_R(-,X)|_{\underline\X}\xrightarrow{\lhom_R(-,f)|_{\underline\X}}
\lhom_R(-,Y)|_{\underline\X}\xrightarrow{\lhom_R(-,g)|_{\underline\X}}
\lhom_R(-,Z)|_{\underline\X}\xrightarrow{\pi} F\to0
$$
in $\mod\underline\X$; see Proposition \ref{longex}(2).
The functor $F$ belongs to $\mod\underline\X$, and it is not isomorphic to $0$.
Indeed, if $F\cong0$ in $\mod\underline\X$, then there exists a homomorphism $h:Z\to Y$ such that $\underline{gh}=\underline{1}$.
So there are homomorphisms $\alpha:Z\to P$ and $\beta:P\to Z$ with $P$ a free $R$-module such that $1-gh=\beta\alpha$.
One has an equality $1=gh+\beta\alpha$ in the local ring $\End_R(Z)$.
Hence either $gh$ or $\beta\alpha$ is an automorphism, and in each case it is deduced that $\sigma$ splits.
This contradiction shows $F\ncong0$ in $\mod\underline\X$.

Taking advantage of Theorem \ref{main3}, we have
$$
\underline{\mod\underline\X}=\lgp(\mod\underline\X)\cong\ds(\underline\X)\cong\zero,
$$
which means that all the objects of $\mod\underline\X$ are projective.
Hence $F$ is a projective object of $\mod\underline\X$, which implies that $\pi$ is a split epimorphism.
We establish a claim.

\begin{claim*}\label{piiso}
The morphism $\pi:\lhom_R(-,Z)|_{\underline\X}\to F$ is an isomorphism.
\end{claim*}

\begin{proof}[Proof of Claim]
The claim follows from a similar argument to the proof of \cite[Lemma IV.6.5]{ASS}.
Let $\theta:F\to\lhom_R(-,Z)|_{\underline\X}$ be a splitting of $\pi$, i.e., a morphism with $\pi\theta=1$.
Then $\theta\pi:\lhom_R(-,Z)|_{\underline\X}\to\lhom_R(-,Z)|_{\underline\X}$ is represented as $\lhom_R(-,\ell)|_{\underline\X}$ for some $\ell\in\End_R(Z)$; see Remark \ref{yoneda}(1).
We have $(\theta\pi)^2=\theta(\pi\theta)\pi=\theta\pi$, which shows that $\lhom_R(-,\ell^2)|_{\underline\X}=\lhom_R(-,\ell)|_{\underline\X}$.
Hence $\underline{\ell^2}=\underline\ell$, namely, $\underline{\ell}$ is an idempotent in $\lend_R(Z)$.
Since $Z$ is a nonfree $R$-module, we have $\lend_R(Z)\ne0$, and $\lend_R(Z)$ is a local ring as $Z$ is indecomposable.
Therefore $\underline\ell$ is either $\underline0$ or $\underline1$.
If $\underline\ell=\underline0$, then $\theta\pi=0$, and $F=0$ as $\theta$ is a monomorphism and $\pi$ is an epimorphism.
So we must have $\underline\ell=\underline1$, which shows that $\pi$ is an isomorphism.
\renewcommand{\qedsymbol}{$\square$}
\end{proof}

The claim shows $\lhom_R(-,g)|_{\underline\X}=0$, and hence $\lhom_R(-,f)|_{\underline\X}:\lhom_R(-,X)|_{\underline\X}\to\lhom_R(-,Y)|_{\underline\X}$ is surjective, and so $Y$ is isomorphic in $\lcm(R)$ to a direct summand of $X$.
Since $X$ is indecomposable in $\lcm(R)$, the object $Y$ is isomorphic to either $0$ or $X$ in $\lcm(R)$.
Hence the $R$-module $Y$ is either free or stably isomorphic to $X$.

Suppose that $Y$ is stably isomorphic to $X$.
Then $\lhom_R(-,Y)|_{\underline\X}$ is isomorphic to $\lhom_R(-,X)|_{\underline\X}$, and it is seen from \cite[Theorem 2.4]{M} that $\lhom_R(-,f)|_{\underline\X}$ is an isomorphism.
Let $\lhom_R(-,\lambda)|_{\underline\X}:\lhom_R(-,Y)|_{\underline\X}\to\lhom_R(-,X)|_{\underline\X}$ be a inverse morphism of $\lhom_R(-,f)|_{\underline\X}$.
Then we have $\underline{\lambda f}=\underline1$, which implies that $\lambda f+\beta\alpha=1$ in $\End_R(X)$ for some homomorphisms $\alpha:X\to L$ and $\beta:L\to X$ with $L$ free.
Since $\End_R(X)$ is a local ring, either $\lambda f$ or $\beta\alpha$ is an automorphism, and in either case the short exact sequence $\sigma$ splits, which is a contradiction.
Consequently, the $R$-module $Y$ has to be free.
\end{proof}

For an object $X$ in an additive category $\C$ we denote by $\add_\C X$ (or simply $\add X$ if there is no confusion) the {\em additive closure} of $X$ in $\C$, that is, the full subcategory of $\C$ containing $X$ which is closed under finite direct sums and direct summands.
We say that a Cohen-Macaulay local ring $R$ has {\em finite CM-representation type} if there exist only a finite number of isomorphism classes of indecomposable MCM modules over $R$.

\begin{lem}\label{end}
Let $\C$ be an additive category, and let $X$ be an object of $\C$.
Then the assignment $F\mapsto F(X)$ makes an equivalence
$$
\mod(\add_\C X)\to\mod\End_\C(X).
$$
In particular, if $R$ is a Cohen-Macaulay local ring of finite CM-representation type, then $\lcm(R)=\add_{\lcm(R)}G$ for some MCM $R$-module $G$, and one has an exact functor $\mod\lcm(R)\to\mod\lend_R(G)$ of abelian categories which is an equivalence.
\end{lem}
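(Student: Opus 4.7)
Write $\mathcal{E}=\add_\C X$ and $\Lambda=\End_\C(X)=\End_\mathcal{E}(X)$. For $F\in\Mod\mathcal{E}$ (a contravariant additive functor $\mathcal{E}\to\ab$), the abelian group $F(X)$ carries a right $\Lambda$-action via $m\cdot\phi:=F(\phi)(m)$, and a natural transformation $F\to G$ yields a $\Lambda$-linear map $F(X)\to G(X)$; thus evaluation at $X$ defines a functor $\Phi:\Mod\mathcal{E}\to\Mod\Lambda$. Since every $Y\in\mathcal{E}$ is a direct summand of some $X^n$ and additive functors preserve finite direct sums, a sequence of $\mathcal{E}$-modules is exact at every object of $\mathcal{E}$ if and only if it is exact at $X$. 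In particular, $\Phi$ is exact and reflects exactness.

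I will first show that $\Phi$ restricts to an equivalence on projectives. By Remark \ref{yoneda}(1) every projective of $\mod\mathcal{E}$ is a summand of some $\Hom_\mathcal{E}(-,X^n)=\Hom_\mathcal{E}(-,X)^n$, which $\Phi$ sends to $\Lambda^n$; since $\mathcal{E}$ is closed under direct summands, every idempotent endomorphism of $\Lambda^n$ splits to a summand decomposition of $X^n$ in $\mathcal{E}$, so $\Phi$ sets up a bijection between isomorphism classes in $\proj(\mod\mathcal{E})$ and in $\proj\Lambda$. For fully faithful on projectives, Yoneda (Remark \ref{yoneda}(1)) gives $\Hom_{\mod\mathcal{E}}(\Hom_\mathcal{E}(-,Y),\Hom_\mathcal{E}(-,Y'))\cong\Hom_\mathcal{E}(Y,Y')$, and the map $\Hom_\mathcal{E}(Y,Y')\to\Hom_\Lambda(\Hom_\C(X,Y),\Hom_\C(X,Y'))$ induced by $\Hom_\C(X,-)$ is an isomorphism by the standard Morita argument (obvious for $Y=X$, extended to $Y=X^n$ by additivity and to summands of $X^n$ by idempotent splitting).

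The equivalence then extends to all finitely presented modules by a standard lifting argument. Each $F\in\mod\mathcal{E}$ has a projective presentation $P_1\to P_0\to F\to 0$; applying the exact functor $\Phi$ gives a projective presentation of $\Phi(F)$. Given $F,F'\in\mod\mathcal{E}$ with such presentations, a morphism $\Phi(F)\to\Phi(F')$ lifts through the projective resolution to a chain map and hence to a morphism $F\to F'$ by the full faithfulness already proven on projectives, with any two such lifts differing by a chain homotopy; this upgrades fullness and faithfulness to all of $\mod\mathcal{E}$. Conversely, any presentation $\Lambda^m\to\Lambda^n\to M\to 0$ with $M\in\mod\Lambda$ lifts under fullness on projectives to $\Hom_\mathcal{E}(-,X^m)\to\Hom_\mathcal{E}(-,X^n)$, whose cokernel $F\in\mod\mathcal{E}$ satisfies $\Phi(F)\cong M$ by exactness of $\Phi$, giving essential surjectivity.

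For the final assertion, finite CM-representation type yields finitely many isomorphism classes of indecomposable MCM $R$-modules; let $G$ be the direct sum of representatives of the non-free classes. Every MCM module decomposes as a direct sum of copies of $R$ together with copies of the $G_i$'s, and in $\lcm(R)$ each free summand becomes zero, whence every object of $\lcm(R)$ is a summand of $G^m$ for some $m$, i.e.\ $\lcm(R)=\add_{\lcm(R)}G$. Applying the first part with $\C=\lcm(R)$ and $X=G$ yields the equivalence $\mod\lcm(R)\to\mod\lend_R(G)$; exactness is automatic, since $\mod\lcm(R)$ is abelian by Proposition \ref{3.3}(1) and any equivalence between abelian categories preserves kernels and cokernels. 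The main technical point in the whole argument is the Morita identification $\Hom_\mathcal{E}(Y,Y')\cong\Hom_\Lambda(\Hom_\C(X,Y),\Hom_\C(X,Y'))$; once this is in hand, the rest is a formal projective-resolution bookkeeping exercise.
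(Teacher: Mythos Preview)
Your proof is correct and essentially self-contained. The paper takes a shorter but less explicit route: it first observes that $M\mapsto\Hom_\C(X,M)$ gives an equivalence $\add_\C X\cong\proj\Lambda$ (the standard Morita identification of additive categories), and then invokes a result of Auslander stating that for any ring $\Lambda$ the evaluation $F\mapsto F(\Lambda)$ is an equivalence $\mod(\proj\Lambda)\to\mod\Lambda$; composing these two equivalences yields the claim. Your argument instead unpacks both steps simultaneously, establishing full faithfulness on projectives via Yoneda and then extending to all finitely presented modules by lifting through projective presentations. This buys independence from the external citation at the cost of the bookkeeping you carry out. One small imprecision: when you say that an idempotent of $\Lambda^n$ ``splits to a summand decomposition of $X^n$ in $\mathcal{E}$'', this would require idempotents to split in $\C$, which is not assumed; the splitting is only guaranteed in $\proj(\mod\mathcal{E})$, which is idempotent complete, but that is all your argument actually needs. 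For the final assertion about $\lcm(R)$ your reasoning matches the paper's.
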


\begin{proof}
Let $\Lambda$ be any ring.
Applying \cite[Proposition 2.5]{Au} to the inclusion $\{\Lambda\}\subseteq\proj\Lambda$ shows that the assignment $F\mapsto F(\Lambda)$ makes an equivalence $\mod(\proj\Lambda)\to\mod\Lambda$.
Note that the assignment $M\mapsto\Hom_\C(X,M)$ makes an equivalence $\add_\C X\cong\proj\End_\C(X)$.
Now letting $\Lambda=\End_\C(X)$ completes the proof of the first assertion.
The second assertion follows from the first assertion and the fact that any equivalence of abelian categories is an exact functor; for instance, see the proof of \cite[Proposition 21.4]{AF}.
\end{proof}

Let $R$ be a local ring.
Recall that $M$ is said to have {\em complexity} $c$, denoted by $\cx_RM=c$, if $c$ is the least nonnegative integer $n$ such that there exists a real number $r$ satisfying the inequality $\beta_i^R(M)\le ri^{n-1}$ for all $i\gg 0$.
It is known that if $R$ is a complete intersection, then the codimension of $R$ is the maximum of the complexities of $R$-modules.
For details on the complexity of a module, we refer the reader to \cite[\S4.2]{A}.

Let $R$ be a $d$-dimensional Gorenstein local ring with algebraically closed residue field $k$ of characteristic zero.
Then $R$ contains a field isomorphic to $k$, and it is known that $R$ has finite CM-representation type if and only if $R$ is a {\em simple (hypersurface) singularity} \cite[\S8]{Y}, namely, $R$ is isomorphic to a hypersurface
$$
k[[x_0,\dots,x_d]]/(f),
$$
where $f$ is one of the following.
\begin{align*}
(\a_n)\quad & x_0^2+x_1^{n+1}+x_2^2+\cdots+x_d^2,\\
(\d_n)\quad & x_0^2x_1+x_1^{n-1}+x_2^2+\cdots+x_d^2,\\
(\e_6)\quad & x_0^3+x_1^4+x_2^2+\cdots+x_d^2,\\
(\e_7)\quad & x_0^3+x_0x_1^3+x_2^2+\cdots+x_d^2,\\
(\e_8)\quad & x_0^3+x_1^5+x_2^2+\cdots+x_d^2.
\end{align*}
For each $\t\in\{\a_n,\d_n,\e_6,\e_7,\e_8\}$, a simple hypersurface singularity of type $(\t)$ is shortly called a {\em $(\t)$-singularity}.

Now we can state and prove the main result of this section, which characterizes the regularity of stable categories of resolving subcategories.

\begin{thm}\label{main2}
Let $R$ be a $d$-dimensional Gorenstein nonregular complete local ring with algebraically closed residue field $k$ of characteristic zero.
Then the following are equivalent.
\begin{enumerate}[\rm(1)]
\item
$\lcm(R)$ is regular.
\item
$R$ is a complete intersection, and $\underline\X$ is regular for every resolving subcategory $\X$ of $\mod R$.
\item
$R$ is a complete intersection, and $\underline\X$ is regular for some resolving subcategory $\X$ of $\mod R$ that contains a module of maximal complexity.
\item
$R$ is an $(\a_1)$-singularity.
\end{enumerate}
When one of these conditions holds, $\mod\lcm(R)$ has global dimension zero.
\end{thm}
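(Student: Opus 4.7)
The plan is to prove the theorem via the cyclic chain of implications $(4)\Rightarrow(2)\Rightarrow(3)\Rightarrow(1)\Rightarrow(4)$, and to derive the final clause on global dimension along the way.

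For $(4)\Rightarrow(1)$ together with the final assertion, I would invoke Kn\"orrer periodicity to reduce to the base cases $d=0$, where $R\cong k[[x]]/(x^2)$, and $d=1$, where $R\cong k[[u,v]]/(uv)$ after a linear change of variable. In both cases $\lcm(R)$ has at most two indecomposables up to isomorphism, and an explicit calculation shows that the stable endomorphism algebra of their direct sum is a finite product of copies of $k$. Lemma~\ref{end} then identifies $\mod\lcm(R)$ with modules over this semisimple artinian ring, giving global dimension zero at once. The implication $(4)\Rightarrow(2)$ follows by noting that an $(\a_1)$-singularity is a hypersurface with isolated singularity, so Corollary~\ref{either} places every resolving subcategory $\X\subseteq\mod R$ in singular equivalence with either $\lcm(R)$ or $\zero$, both of which are regular by~(1). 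The implication $(2)\Rightarrow(3)$ is trivial by taking $\X=\mod R$, which contains the residue field $k$, a module of maximal complexity $\codim R$.

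For $(3)\Rightarrow(1)$, Corollary~\ref{xxcm} tells us that $\underline{\X\cap\cm(R)}$ is singularly equivalent to $\underline\X$, hence also regular. After replacing $M$ by $\syz^d M$ (which has the same complexity) I may assume $\X\subseteq\cm(R)$ with $M\in\X$ still of complexity $c=\codim R$. Support-variety theory for complete intersections (Avramov-Buchweitz, Stevenson) tells us that a module of maximal complexity $c$ has full support variety $\mathbb{P}^{c-1}$, and that the thick/resolving closure of such an $M$ inside $\lcm(R)$ is all of $\lcm(R)$. Regularity of the underlying stable category is preserved under this thick generation, and thus $\lcm(R)$ itself must be regular.

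The main obstacle is $(1)\Rightarrow(4)$. Since $R$ is Gorenstein, $\cm(R)=\gp(\mod R)$ is closed under cosyzygies and so satisfies the condition $(\g_0)$; Proposition~\ref{sgzfgl} then upgrades regularity of $\lcm(R)$ to the strictly stronger statement that $\mod\lcm(R)$ has global dimension zero, i.e.\ is semisimple abelian. I would combine this with Lemma~\ref{midfree} applied to the Auslander-Reiten sequences of $\cm(R)$: every such sequence $0\to\tau Z\to Y\to Z\to0$ must have $Y$ free, and the simple functor attached to a non-free indecomposable $Z$, being both simple and projective in $\mod\lcm(R)$, forces $\lend_R(Z)$ to be a division ring. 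These two constraints together show that $R$ must have finite CM-representation type; the classification of simple hypersurface singularities over an algebraically closed field of characteristic zero then narrows $R$ down to one of the ADE types, and a direct computation of the stable endomorphism algebra for each of $(\a_n)_{n\ge 2}$, $(\d_n)$, $(\e_6)$, $(\e_7)$, $(\e_8)$ reveals a nonzero radical that violates the vanishing demanded by global dimension zero. The crux of the difficulty is converting the homological semisimplicity into the combinatorial restriction on the AR quiver that isolates the $(\a_1)$-case.
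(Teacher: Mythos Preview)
Your cycle $(4)\Rightarrow(2)\Rightarrow(3)\Rightarrow(1)\Rightarrow(4)$ differs from the paper's $(4)\Rightarrow(2)\Rightarrow(1)\Rightarrow(3)\Rightarrow(4)$, and the rearrangement introduces two genuine gaps.

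\textbf{Gap in $(3)\Rightarrow(1)$.} The support-variety step (``the thick/resolving closure of such an $M$ inside $\lcm(R)$ is all of $\lcm(R)$'') is not justified. The classification results of Takahashi and Stevenson you invoke require $R$ to have an isolated singularity, which you do not know at this stage of the argument; in condition~(3) you only know $R$ is a complete intersection. Without isolated singularity there can be many resolving subcategories of $\cm(R)$ containing modules of maximal complexity, and nothing you have written explains why regularity of $\underline{\X\cap\cm(R)}$ should propagate to $\lcm(R)$.

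\textbf{Gap in $(1)\Rightarrow(4)$.} You apply Lemma~\ref{midfree} to Auslander--Reiten sequences, but AR sequences in $\cm(R)$ are only guaranteed to exist once $R$ is known to have an isolated singularity. At the start of $(1)\Rightarrow(4)$ you only know $R$ is Gorenstein, complete, and $\lcm(R)$ is regular. Moreover, your sentence ``these two constraints together show that $R$ must have finite CM-representation type'' does not indicate any mechanism: having $\lend_R(Z)$ a division ring for each indecomposable $Z$ and AR-middle-terms free does not by itself bound the number of indecomposables.

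The paper resolves both issues by establishing finite CM type \emph{first}, without AR theory. In its $(1)\Rightarrow(3)$ it takes an MCM approximation $0\to Y\to X\to k\to 0$ and, for any nonfree indecomposable MCM $M$, produces a nonsplit extension $0\to X'\to E\to M\to 0$ with $X'$ an indecomposable summand of $X$; Lemma~\ref{midfree} then forces $X'\cong\syz M$, so every such $M$ is a summand of $\syz^{-1}X$. This gives finite CM type (hence simple singularity, hence complete intersection with isolated singularity) in one stroke. In $(3)\Rightarrow(4)$ it again avoids the support-variety shortcut: Avramov--Buchweitz Ext-vanishing plus Lemma~\ref{midfree} shows every nonfree indecomposable in $\X$ is some $\syz^\ell M$ with $1\le\ell\le c+1$, so $\X$ is additively finite; then \cite{arg} forces $\X=\cm(R)$, whence finite CM type and isolated singularity, and only then are AR sequences invoked to read off the $(\a_1)$ shape from the quiver classification.
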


\begin{proof}
(4) $\Rightarrow$ (2):
An $(\a_1)$-singularity is an isolated hypersurface singularity.
In view of Corollary \ref{either}, it is enough to show that $\lcm(R)$ is regular.
By Kn\"{o}rrer's periodicity \cite[Theorem (12.10)]{Y} we can assume that $R$ is isomorphic to either $k[[x]]/(x^2)$ or $k[[x,y]]/(xy)$.

First, let $R=k[[x]]/(x^2)$.
Then all the nonfree indecomposable $R$-modules are isomorphic to $k$.
Hence $\lcm(R)=\lmod R=\add_{\lmod R}(k)$, and we have $\mod\lcm(R)\cong\mod\lend_R(k)$ by Lemma \ref{end}.
Note that $\lend_R(k)$ is isomorphic to the field $k$, whose singularity category is $\zero$.
Therefore $\lcm(R)$ is regular.

Next, let $R=k[[x,y]]/(xy)$.
The nonisomorphic nonfree indecomposable MCM $R$-modules are $R/(x)$ and $R/(y)$, whence Lemma \ref{end} implies $\mod\lcm(R)\cong\mod\lend_R(R/(x)\oplus R/(y))$.
It is seen by using \cite[Lemma (3.9)]{Y} that
$$
\lend_R(R/(x)\oplus R/(y))
\cong\begin{pmatrix}
\lend_R(R/(x))&\lhom_R(R/(y),R/(x))\\
\lhom_R(R/(x),R/(y))&\lend_R(R/(y))
\end{pmatrix}
\cong\begin{pmatrix}
k&0\\
0&k
\end{pmatrix}
\cong k\times k,
$$
and we have $\ds(k\times k)=0$.
Thus $\lcm(R)$ is regular.

(2) $\Rightarrow$ (1):
The implication follows by letting $\X=\cm(R)$.

(1) $\Rightarrow$ (3):
Take an {\em MCM approximation}
$$
0 \to Y \to X \to k \to 0
$$
of $k$, i.e., a short exact sequence of $R$-modules such that $X$ is MCM and $Y$ has finite projective dimension; see \cite[Theorem 1.8]{ABu}.
Let $M$ be a nonfree indecomposable MCM $R$-module.
There is an exact sequence
$$
\Ext_R^1(M,X)\to\Ext_R^1(M,k)\to\Ext_R^2(M,Y),
$$
and $\Ext_R^{>0}(M,Y)=0$ since $M$ is MCM and $Y$ has finite projective dimension.
As $\Ext_R^1(M,k)$ does not vanish, neither does $\Ext_R^1(M,X)$.
Hence $\Ext_R^1(M,X')\ne0$ for some indecomposable direct summand $X'$ of $X$, and we find a nonsplit exact sequence $0\to X'\to E\to M\to0$.
As $M$ and $X'$ are MCM, so is $E$.
Applying Lemma \ref{midfree} to $\X=\cm(R)$, the module $X'$ is isomorphic to $\syz M$.
Hence $M$ is isomorphic to a direct summand of $\syz^{-1}X$, which shows that
$$
\cm(R)=\add(R\oplus\syz^{-1}X).
$$
Therefore $R$ has finite CM-representation type, and is a simple hypersurface singularity by \cite[Corollary (8.16)]{Y}.
In particular, $R$ is a complete intersection.
By assumption $\lcm(R)$ is regular, and $\cm(R)$ contains $\syz^dk$ which has maximal complexity (see also \cite[Remarks 8.1.1(2)]{A})

(3) $\Rightarrow$ (4):
Let $M$ be an $R$-module in $\X$ that has maximal complexity.
Then $\syz^dM$ is in $\X\cap\cm(R)$ and has the same complexity as $M$.
By virtue of Corollary \ref{xxcm}, replacing $\X$ with $\X\cap\cm(R)$, we may assume that $\X$ is contained in $\cm(R)$.
It follows from \cite[Corollary 4.16]{radius} that $\X$ is closed under cosyzygies.
There exists an indecomposable direct summand $N$ of $M$ having the same complexity as $M$.
Replacing $M$ with $N$, we can assume that $M$ is indecomposable.

Let $X$ be a nonfree indecomposable module in $\X$, and set $c=\codim R$.
Suppose that $\Ext_R^i(M,X)=0$ for all $1\le i\le c+1$.
Then we have $\Ext_R^{>0}(M,X)=0$ by \cite[Theorem 4.7]{AvB}, and $\Tor_{>d}^R(M,X)=0$ by \cite[Theorem III]{AvB}.
The fact that $M$ has maximal complexity forces $X$ to have finite projective dimension by \cite[Proposition 2.7]{ttp}.
This contadicts the fact that $X$ is a nonfree MCM module.
Hence $\Ext_R^\ell(M,X)\ne0$ for some $1\le\ell\le c+1$, which implies that there exists a nonsplit exact sequence
$$
0 \to X \to E \to \syz^{\ell-1}M \to 0.
$$
Note that $\syz^{\ell-1}M$ is a nonfree indecomposable module in $\X$.
Since $X$ is in $\X$, so is $E$.
Lemma \ref{midfree} shows that $X$ is isomorphic to $\syz(\syz^{\ell-1}M)=\syz^\ell M$.
It follows that the equality
$$
\X=\add(R\oplus\syz M\oplus\syz^2M\oplus\cdots\oplus\syz^{c+1}M)
$$
holds, which especially says that $\X$ is contravariantly finite in $\mod R$.
By virtue of \cite[Theorem 1.2]{arg}, we have $\X=\cm(R)$.
Hence $R$ has finite CM-representation type, and is a simple singularity.
In particular, $R$ is an isolated singularity; see \cite[Corollary 2]{HL}.

Let $C$ be a nonfree indecomposable MCM $R$-module.
Take an Auslander-Reiten sequence
$$
0 \to \tau C \to L \to C \to 0
$$
ending in $C$; see \cite[Theorem (3.2)]{Y}.
This is a nonsplit exact sequence of MCM modules with $C$ and $\tau C$ indecomposable.
Hence we can apply Lemma \ref{midfree} to see that $L$ is a free $R$-module.
Thus for each nonfree indecomposable MCM $R$-module $D$ there are no irreducible homomorphisms from $D$ to $C$ and no such ones from $\tau C$ to $D$.
This means that in the Auslander-Reiten quiver of $\cm(R)$ there is no arrow between two vertices different from $R$.
The classification of the Auslander-Reiten quivers of the MCM modules over simple singularities \cite[Chapters 8--12]{Y} together with \cite[Corollary (12.11.3)]{Y} implies that the only simple singularities $R$ where $\cm(R)$ possesses such an Auslander-Reiten quiver are $(\a_1)$-singularities.

It now follows that the conditions (1)--(4) in the theorem are equivalent.
The last assertion of the theorem is shown in the proof of the implication (4) $\Rightarrow$ (2) stated above.
\end{proof}

\begin{rem}
The condition in Theorem \ref{main2}(3) that $\X$ contains a module of maximal complexity cannot be removed.
In fact, let $R$ be any nonregular complete intersection local ring.
Let $\X$ be a resolving subcategory of $\mod R$ whose objects have finite projective dimension (e.g., the subcategory consisting of free modules, the subcategory consisting of modules of finite projective dimension, and so on).
Then the stable category $\underline\X$ is regular by Proposition \ref{sgzfgl}.
However, of course, $R$ is not necessarily an $(\a_1)$-singularity.
The reason for this is that all modules in $\X$ have complexity zero, and $R$ has positive codimension, so $\X$ does not contain a module of maximal complexity.
\end{rem}

Let $R$ be a simple hypersurface singularity.
Theorem \ref{main1} especially says that $\lcm(R)$ is not regular unless $R$ is an $(\a_1)$-singularity.
One can actually confirm this for a $1$-dimensional $(\a_2)$-singularity by direct calculation.

\begin{prop}
Let $k$ be an algebraically closed field of characteristic zero.
Let $R$ be an $(\a_2)$-singularity of dimension $1$ over $k$.
Then there is a triangle equivalence
$$
\ds(\lcm(R))\cong\ds(k[t]/(t^2)).
$$
In particular, $\lcm(R)$ is not regular.
\end{prop}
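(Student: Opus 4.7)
The plan is to make all relevant categories explicit and reduce $\mod\lcm(R)$ to a module category over an artinian local ring. First, by a linear change of coordinates one obtains an isomorphism
\[
R = k[[x_0,x_1]]/(x_0^2+x_1^3)\;\cong\;k[[t^2,t^3]]\;\subseteq\;k[[t]] =: \tilde R,
\]
with $\tilde R$ the normalisation of $R$ (a discrete valuation ring, hence regular). I would then invoke the classification of indecomposable MCM modules over simple singularities \cite{Y} to conclude that the only non-free indecomposable MCM $R$-module is $\tilde R$ itself; this is also plausible from the observation that $\dim_k(\tilde R/R)=1$, which leaves no room for an intermediate fractional ideal. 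Consequently $\lcm(R)=\add_{\lcm(R)}\tilde R$, and Lemma \ref{end} gives an equivalence of abelian categories
\[
\mod\lcm(R)\;\cong\;\mod\lend_R(\tilde R).
\]

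Next I would compute $\lend_R(\tilde R)$. Since $\tilde R$ is a commutative $R$-algebra containing $R$, every $R$-linear endomorphism of $\tilde R$ is multiplication by a uniquely determined element of $\tilde R$, so $\End_R(\tilde R)\cong\tilde R$. Under this identification, the subgroup $\P_R(\tilde R,\tilde R)$ of morphisms factoring through a free $R$-module corresponds to the conductor
\[
\mathfrak c \;=\; \{a\in\tilde R : a\tilde R\subseteq R\}.
\]
Indeed, if $a\in\mathfrak c$ then multiplication by $a$ factors as $\tilde R\xrightarrow{\,\cdot a\,}R\hookrightarrow\tilde R$; conversely, any factorisation $\tilde R\xrightarrow{\psi}R^n\xrightarrow{\pi}\tilde R$ decomposes as a sum $\sum_i\bigl(\tilde R\xrightarrow{\,\cdot c_i\,}R\xrightarrow{\,\cdot b_i\,}\tilde R\bigr)$ with each $c_i\in\Hom_R(\tilde R,R)$, and the latter identifies with $\mathfrak c$ inside $\tilde R$, so $\sum_ib_ic_i\in\mathfrak c$. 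A direct computation in $k[[t^2,t^3]]\subseteq k[[t]]$ yields $\mathfrak c = t^2k[[t]]$, whence
\[
\lend_R(\tilde R) \;=\; \tilde R/\mathfrak c \;\cong\; k[[t]]/(t^2) \;\cong\; k[t]/(t^2).
\]

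Composing the two equivalences gives $\mod\lcm(R)\cong\mod(k[t]/(t^2))$ as abelian categories, and passing to bounded derived categories and then to Verdier quotients by perfect complexes yields the desired triangle equivalence $\ds(\lcm(R))\cong\ds(k[t]/(t^2))$. For the final assertion, the residue field $k$ has infinite projective dimension over $k[t]/(t^2)$, so $\ds(k[t]/(t^2))\not\cong\zero$; the equivalence $(2)\Leftrightarrow(3)$ of Proposition \ref{sgzfgl} applied to $\X=\cm(R)$ then forces $\lcm(R)$ to be non-regular.

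The main obstacle is the MCM classification for the cusp $R$ in the first step: one needs either the general finite-CM-type theorem from \cite{Y} or a direct matrix-factorisation argument (e.g.\ the $2\times2$ factorisation of $x_0^2+x_1^3$ whose cokernel is $\tilde R$) to rule out any non-free indecomposable MCM module other than $\tilde R$. The conductor computation and the identification $\P_R(\tilde R,\tilde R)=\mathfrak c$ are routine once the setup is fixed.
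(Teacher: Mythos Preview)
Your proof is correct and follows the same overall strategy as the paper: identify the unique non-free indecomposable MCM module, apply Lemma~\ref{end} to reduce $\mod\lcm(R)$ to modules over its stable endomorphism ring, and compute that ring to be $k[t]/(t^2)$. The paper phrases the indecomposable as the maximal ideal $\m$ rather than the normalisation $\tilde R$, but these are isomorphic as $R$-modules (multiplication by $t^2$ gives $\tilde R\xrightarrow{\sim}\m$), so this is only cosmetic.

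The one genuine difference is the computation of the stable endomorphism ring. The paper first identifies $\End_R(\m)\cong S=k[[t]]$, then uses the formula $\lend_R(\m)\cong\Tor_1^R(\tr\m,\m)$ from \cite[Lemma~(3.9)]{Y} together with $\tr\m\cong\m$ to compute $\dim_k\lend_R(\m)=2$, and finally concludes that the quotient of $S$ of $k$-dimension $2$ must be $S/(t^2)$. Your conductor argument is more direct: it identifies $\P_R(\tilde R,\tilde R)$ with $\mathfrak c=t^2k[[t]]$ in one step and reads off the ring structure of $\lend_R(\tilde R)=\tilde R/\mathfrak c$ immediately, bypassing the Tor computation. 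Both are valid; yours is arguably cleaner and generalises more readily to other curve singularities.
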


\begin{proof}
One has $R\cong k[[x_0,x_1]]/(x_0^2+x_1^3)$, and all the nonisomorphic indecomposable MCM $R$-modules are isomorphic to the maximal ideal $\m$ of $R$; see \cite[Proposition (5.11)]{Y}.
Hence $\ds(\lcm(R))$ is triangle equivalent to $\ds(\lend_R(\m))$ by Lemma \ref{end}.

The ring $R$ is isomorphic to the numerical semigroup ring $k[[t^2,t^3]]$, which is a subring of the formal power series ring $S=k[[t]]$.
Note that $R$ and $S$ have the common quotient field $K$.
One has
\begin{align*}
&\End_R(\m)\cong(\m:_K\m)=(\m:_S\m)=S,\\
&\lend_R(\m)\underset{\rm(a)}{\cong}\Tor_1^R(\tr(\m),\m)\underset{\rm(b)}{\cong}\Tor_1^R(\m,\m)\cong\Tor_2^R(\m,k)\cong k^{\oplus2},
\end{align*}
where (a) follows from \cite[Lemma (3.9)]{Y}, and (b) holds since $\tr(\m)\cong\m$.
Thus $\lend_R(\m)$ is isomorphic to a quotient ring of $S$ that has dimension $2$ as a $k$-vector space, so $\lend_R(\m)\cong S/(t^2)=k[t]/(t^2)$.
Therefore $\ds(\lcm(R))$ is triangle equivalent to $\ds(k[t]/(t^2))$, which is nonzero since $k[t]/(t^2)$ is an artinian ring of infinite global dimension.
\end{proof}

\begin{rem}
Let $R,S$ be Gorenstein local rings.
Even if $\lcm(R)$ and $\lcm(S)$ are singularly equivalent, the numbers of indecomposable MCM modules over $R$ and $S$ are not necessarily equal.
In fact, let $R=k[[x,y]]/(xy)$ where $k$ is an algebraically closed field of characteristic zero.
Then $\lcm(R)$ is singularly equivalent to $\zero=\lcm(k)$ by Theorem \ref{main2}.
The isomorphism classes of indecomposable MCM $R$-modules are those of $R$, $R/xR$ and $R/yR$, so there are three.
But $k$ is the only indecomposable MCM $k$-module.
\end{rem}

\begin{ac}
The authors are grateful to Kazuhiko Kurano for his comment on the proof of Theorem \ref{main2}, to Henning Krause for his question on Remark \ref{krause} and to Osamu Iyama for his suggestions on all parts of the paper.
The authors also thank the referee for his/her kind and helpful advice.
\end{ac}


\end{document}